\newtheorem{definition}{Definition}[section]
\newtheorem{lemma}[definition]{Lemma}
\newtheorem{theorem}[definition]{Theorem}
\newtheorem{proposition}[definition]{Proposition}
\newtheorem{cor}[definition]{Corollary}
\theoremstyle{definition}
\theoremstyle{definition}
\newtheorem{remark}[definition]{Remark}
\theoremstyle{definition}
\theoremstyle{definition}
\newtheorem{conjecture}[definition]{Conjecture}
\def\vol{\operatorname{vol}}
\def\tr{\operatorname{tr}}
\def\dist{\operatorname{dist}}
\def\diam{\operatorname{diam}}
\def\minA{\operatorname{minA}}
\newcommand{\D}{\nabla}
\newcommand{\p}{\partial}
\renewcommand{\th}{\theta}
\renewcommand{\bar}{\overline}
\renewcommand{\tilde}{\widetilde}
\newcommand{\RR}{\mathbb{R}}
\DeclareMathOperator{\IN}{IN}
\DeclareMathOperator{\Ree}{Re}
\DeclareMathOperator{\Imm}{Im}
\DeclareMathOperator{\sn}{sn}
\DeclareMathOperator{\cn}{cn}
\DeclareMathOperator{\tn}{tn}
\DeclareMathOperator{\arctn}{arctn}
\renewcommand{\epsilon}{\varepsilon}
\renewcommand{\leq}{\leqslant}
\renewcommand{\geq}{\geqslant}
\newcommand{\G}{{\boldsymbol{g}}}
\newcommand{\g}{g}
\newcommand{\cg}{{\tilde{g}}}
\newcommand{\fg}{{\bar{g}}}
\renewcommand{\D}{{\nabla}}
\newcommand{\cD}{{\tilde{\D}}}
\newcommand{\fD}{{\bar{\D}}}
\newcommand{\dA}{{\,dA}}
\newcommand{\cdA}{{\,d\tilde{A}}}
\newcommand{\fdA}{{\,d\bar{A}}}
\newcommand{\K}{{K}}
\newcommand{\cK}{{\tilde{K}}}
\newcommand{\fDelta}{{\bar{\Delta}}}
\newcommand{\updatetag}[1]{{}}
\title{Drawstrings and flexibility in the Geroch conjecture}
\author{Demetre Kazaras and Kai Xu}
\begin{document}
\maketitle


\begin{abstract}
In this paper, we observe new phenomena related to the structure of 3-manifolds satisfying lower scalar curvature bounds. We construct warped-product manifolds of almost nonnegative scalar curvature that converge to pulled string spaces in the Sormani--Wenger intrinsic flat topology. These examples extend the results of Lee--Naber--Neumayer \cite{LNN} to the case of dimension $3$. As a consequence, we produce the first counterexample to a conjecture of Sormani \cite{SormaniConj} on the stability of the Geroch Conjecture. Our example tests the appropriate hypothesis for a related conjecture of Gromov. On the other hand, we demonstrate a $W^{1,p}$-stability statement ($1\leq p<2$) for the Geroch Conjecture in the class of warped products.
\end{abstract}

\section{Introduction and main results} {\ }

\vspace{-9pt}

\noindent\textbf{1.1. Background.} Rigidity statements are central to our differential geometric understanding of curvature conditions. In the study of scalar curvature lower bounds, fundamental works of Gromov--Lawson \cite{GL} and Schoen--Yau \cite{SY} show the following rigidity statement: {\emph{A Riemannian $n$-torus $(T^n,\G)$ of nonnegative scalar curvature, $R_\G\geq0$, must be flat.}} This is known as the Geroch Conjecture or the Scalar Torus Rigidity Theorem. To deepen our understanding of scalar curvature, the next step is to investigate the associated almost-rigidity or stability problem, which asks the following: \textit{To what extent is a Riemannian torus $(T^n,\G)$ of almost nonnegative scalar curvature $R_\G>-\varepsilon$ close to being flat?} This is a developing subject of active interest -- several special cases have been addressed \cite{Allen_2021, AHLP, Chu-Lee_2022, CKP}, though the general picture remains unclear.

There are known challenges in this problem due to the considerable flexibility of the almost nonnegative scalar curvature condition. One source of this flexibility is the following construction that dates back to Gromov--Lawson \cite{GL} and Schoen--Yau \cite{SY}. Given a Riemannian manifold $(M^n,\G)$ with $R_\G\geq\lambda$, a point $p\in M^n$, and a small $\epsilon>0$. One can locally alter $(M^n,g)$ so that an annular region about $p$ becomes cylindrical, i.e. isometric to the product of a small round $S^{n-1}$ and an interval, and the new scalar curvature is at least $\lambda-\varepsilon$. When $\epsilon\to0$, the cross-sectional width of the cylinder converges to zero. Some detailed constructions can be found in \cite{BDS, Dodziuk, GL, Sweeney_2023}.
Based on this construction, several examples can be made.

\begin{wrapfigure}{r}{0.5\textwidth}
    \centering
    \includegraphics[width=0.48\textwidth]{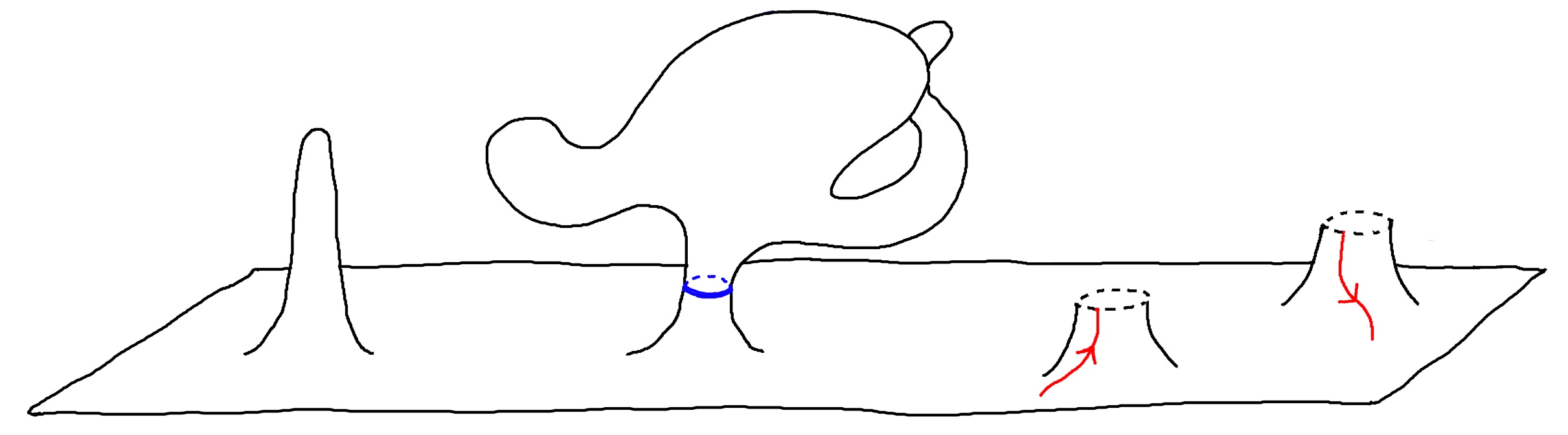}
    \captionsetup{margin=0.8cm}
    \caption{The picture shows, from left to right, the spline, other world, and tunnel examples. 
    }
    \label{fig:tunnels}
    \vspace{-9pt}
\end{wrapfigure}
\begin{enumerate}
    \item If we prolong the cylinder and close it with a spherical cap, then we obtain a \textit{spline}\footnote{There are $C^\infty$-small perturbations of a spline which contain no small minimal surfaces. For this reason, the minA invariant \eqref{e:minA} does not detect spline phenomena.} (or \textit{gravitational well}). Ilmanen observed that by attaching an increasing number of splines with decreasing width to the round sphere, one obtains a Gromov--Hausdorff diverging sequence of positive scalar curvature metrics on $S^n$.

    
    \item Given manifolds $(M^n,\G), (N^n,\mathbf{h})$, we can perform the construction near $p\in M^n$ and $q\in N^n$, and glue the two resulting cylinders. This is often called the \textit{connected sum construction}, which produces metrics on the connected sum $M^n\#N^n$. For $N^n\cong S^n$ this is usually called an \textit{other world} (or \textit{bag of gold} or \textit{bubble}).

    
    \item When one takes $p,q$ lying in a common manifold and applies the previous construction, one creates a \textit{tunnel} connecting $p$ with $q$. If tunnels are built with increasingly dense entries and exits along a given curve in $M$, one may obtain the sewing construction of Basilio--Dodziuk--Sormani \cite{BDS, BS}.
\end{enumerate}

These constructions represent ways in which a lower bound on scalar curvature fails to control metric structures. An almost rigidity statement for the Geroch conjecture needs to either exclude them from the hypotheses, or introduce a notion of convergence that is agnostic to these phenomena. In dimension $3$, Gromov \cite{Billiards} and Sormani \cite{SormaniConj} propose using the \textit{Sormani--Wenger intrinsic flat metric} \cite{Sv2} to measure the distance between tori, and impose a uniform lower bound on the \textit{minA invariant} given by
\begin{equation}\label{e:minA}
    \minA(M,\G):=\inf\{|\Sigma|_{\G}\colon \Sigma\subset M\text{ is a closed embedded minimal surface}\}.
\end{equation}
A minA lower bound forbids extreme other world and tunnel examples which form small minimal surfaces, while the Intrinsic-Flat topology is not significantly impacted by gravity wells. This motivates the following {\emph{minA-IF stability}} conjecture.
\begin{conjecture}\cite[Conjecture 7.1]{SormaniConj}\label{conj:sormanitorus}
    Suppose $\{(M_i,g_i)\}_{i=1}^\infty$ is a sequence of Riemannian $3$-tori with $\diam(M_i,g_i)$ uniformly bounded above and $\vol(M_i,g_i)$ and $\minA(M_i,g_i)$ uniformly bounded away from $0$. If $R_{g_i}\geq-1/i$, then $(M_i,g_i)$ subsequentially converges to a flat torus in the volume-preserving intrinsic flat sense as $i\to\infty$.
\end{conjecture}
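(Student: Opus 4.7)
The paper's abstract explicitly promises a counterexample to this conjecture, so my plan is to construct a refuting sequence rather than attempt a proof. I would work in the warped-product class, where scalar curvature and minimal surfaces can be computed and controlled explicitly.

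\textbf{Why a 1-dimensional base is inadequate.} For the simplest ansatz $g_i=dt^2+f_i^2 h$ on $T^3=S^1_t\times T^2$ with $h$ a flat metric on $T^2$, a direct computation gives $R_{g_i}=-4f_i''/f_i-2(f_i'/f_i)^2$, and setting $\psi_i=\log f_i$ the bound $R_{g_i}\geq -1/i$ becomes
\[
4\psi_i''+6(\psi_i')^2\leq 1/i.
\]
In particular $\psi_i''\leq 1/(4i)$, and integrating from any local minimum $t_\ast$ gives the quadratic upper bound $\psi_i(t)\leq\min\psi_i+(t-t_\ast)^2/(8i)$. With the $g_i$-diameter bounded by $D$, this forces the oscillation of $\psi_i$ to be $O(D^2/i)\to 0$, so $f_i$ tends uniformly to a constant and $(T^3,g_i)$ converges to a flat torus. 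The same rigidity afflicts doubly-warped products $dt^2+u^2 ds^2+v^2 dy^2$ after the substitution $\sigma=\log(uv)$, $\tau=\log(u/v)$.

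\textbf{Ansatz with a 2-dimensional base.} I would therefore write $T^3=\Sigma\times S^1_\theta$ with $\Sigma\cong T^2$ and take $g_i=h_i+f_i^2\,d\theta^2$, where $h_i$ is a metric on $\Sigma$ and $f_i:\Sigma\to(0,\infty)$. Now $R_{g_i}=R_{h_i}-2\Delta_{h_i}f_i/f_i$, and the scalar bound reads
\[
\Delta_{h_i}f_i\leq\tfrac{1}{2}\bigl(R_{h_i}+1/i\bigr)f_i,
\]
a genuinely 2-dimensional elliptic inequality that is vastly more flexible than the ODE above. I would choose $h_i$ with a concentrated bump of positive Gauss curvature near a point $p_i\in\Sigma$, balanced by mild negative curvature elsewhere so that Gauss--Bonnet holds, and $f_i$ dipping from $1$ down to $\epsilon_i\to 0$ across that bump. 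The positive $R_{h_i}$ in the bump absorbs the large $-2\Delta_{h_i}f_i/f_i$ and the scalar inequality holds uniformly.

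\textbf{Limit and obstacles.} The totally geodesic slices $\Sigma\times\{\theta\}$ have area $|\Sigma|_{h_i}\sim 1$, which secures a $\minA$ lower bound against slice-type minimal surfaces, while the $\theta$-fiber over $p_i$ collapses in the limit, producing a pulled-string quotient of $T^3$ that is not a flat torus. The two main technical obstacles are (i) ruling out small minimal surfaces of cylinder type $\gamma\times S^1_\theta$ in the pinch region---their area is $\int_\gamma f_i\,ds$, so short curves $\gamma\subset\Sigma$ crossing the bump must be prevented by controlling the systole of $h_i$ together with the support of the bump; and (ii) rigorously identifying the SWIF limit by constructing integral-current parameterizations from $(T^3,g_i)$ to the pulled-string space and bounding their flat-filling distance. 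Obstacle (i) is the subtler one, because the required concentrated positive Gauss curvature near $p_i$ must coexist with a uniform systole lower bound on $h_i$, and the balance between bump amplitude, bump support, and dip depth $\epsilon_i$ is the quantitative heart of the construction.
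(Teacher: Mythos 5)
Your outline matches the paper's strategy: the counterexample to Conjecture~\ref{conj:sormanitorus} given in Theorem~\ref{t:T3} is exactly a warped product $\G_i=g_i+\varphi_i^2\,dt^2$ over a $2$-torus base, the scalar curvature identity $R_{\G_i}=2(K_{g_i}-\Delta_{g_i}\varphi_i/\varphi_i)$ you wrote is the engine, the limit is the pulled-string quotient, and convergence is established via the Basilio--Dodziuk--Sormani scrunching criterion (Theorem~\ref{t:scrunch}). Two places where your sketch undersells the actual work. First, a generic smooth bump of positive Gauss curvature is not enough: as the paper's footnote observes, the Harnack inequality already forbids a flat base from carrying a supersolution $\varphi$ that drops to $\epsilon$ on an $\epsilon$-ball, and the base curvature near the pinch must blow up at the precise rate $O\bigl(1/(r^2(\log 1/r)^{2+2c_2})\bigr)$ realized by the smoothed-cone profile $f(r)=r(1-c_1/\log(1/r))$, $\varphi(r)=(\log 1/r)^{-c_2}$ with $c_1\geq c_2^2$; arranging for this to glue to the flat exterior while keeping $R\geq 2(k-\epsilon)$ is the quantitative heart you flagged and occupies all of Sections~\ref{sec:epsilonpipe}--\ref{sec:analytic}. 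Second, your plan to rule out thin cylinder-type minimal surfaces by controlling the systole of the base metric aims at the wrong metric: $\gamma\times S^1$ is minimal precisely when $\gamma$ is a geodesic of the conformal metric $\varphi_i^2 g_i$ (Lemma~\ref{lemma-aux:scalar_formula}), which is being crushed near the drawstring, so short geodesics are unavoidable there. The paper instead builds the drawstring so that every concentric tube $\{r=\mathrm{const}\}$ about the axis is strictly mean convex (property (V) of Theorem~\ref{t:T3tube}); the maximum principle then expels any closed minimal surface from the pinch region, and the classical monotonicity formula applied at an exterior point supplies the uniform $\minA$ lower bound.
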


\begin{wrapfigure}{r}{0.36\textwidth}
    \centering
    \vspace{-.5cm}
    \includegraphics[width=0.36\textwidth]{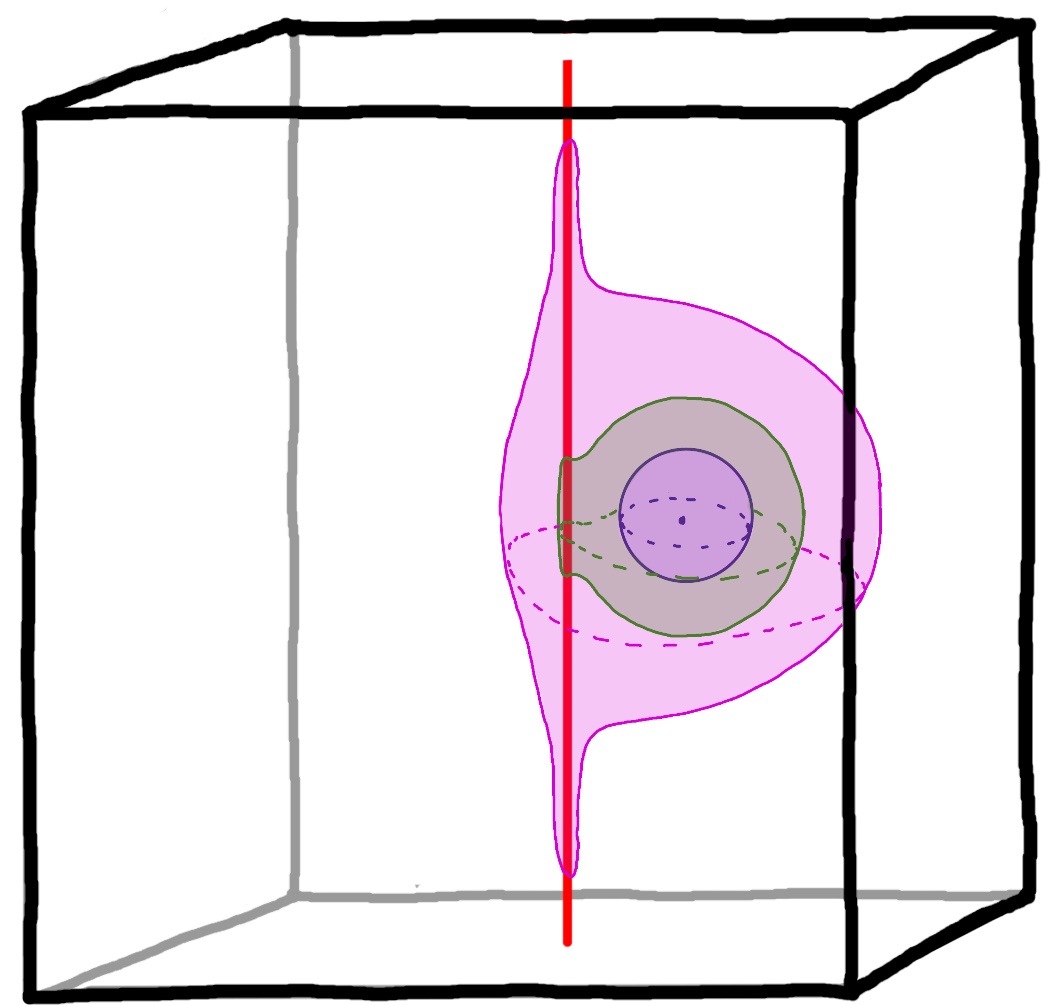}
    \captionsetup{margin=0.2cm}
    \caption{Concentric metric balls in $(M_i,\G_i)$ of Theorem \ref{t:T3}.
    }
    \label{fig:dsball}
\end{wrapfigure}

\noindent\textbf{1.2. Drawstrings.} In addition to the examples mentioned above, we discover a new flexibility phenomenon in scalar curvature, which we call {\emph{drawstrings}}. Given a flat 3-torus, the drawstring construction alters the Riemannian metric near a closed geodesic, reducing its length to less than $\epsilon$ while keeping scalar curvature $R\geq-\epsilon$. This geodesic is a metaphorical drawstring, scrunching the fabric of space and shortening the distance between otherwise disparate points. 
A metric modeling this behavior is a warped product of a surface and a line $\G=g+\varphi^2dt^2$, where the function $\varphi$ equals 1 outside an $\epsilon$-ball and rapidly decreases to $\epsilon$ at its origin. As $\epsilon\to0$, our examples converge to a \textit{pulled string space}, where the given geodesic has zero length while the remainder of the torus is flat. Moreover, the drawstring construction satisfies a uniform minA lower bound. Thus, we are led to counterexamples of Conjecture \ref{conj:sormanitorus}.

\vspace{6pt}

\begin{theorem}\label{t:T3}
There exists a sequence of $3$-dimensional Riemannian tori $\{(M_i,\G_i)\}_{i=1}^\infty$ and constants $D_0,V_0,A_0>0$ satisfying the following for all $i$:
\begin{align}
    \begin{split}
        &\quad R_{\G_i}\geq-\frac1i,\quad D_0^{-1}\leq \mathrm{diam}(M_i,\G_i)\leq D_0,\\
        &V_0^{-1}\leq \vol(M_i,\G_i)\leq V_0,\quad \minA(M_i,\G_i)\geq A_0,
    \end{split}
\end{align}
such that $\{(M_i,\G_i)\}_{i=1}^\infty$ converges in the Gromov--Hausdorff and volume-preserving intrinsic flat senses to a pulled-string space. The limit space is obtained from a flat $3$-torus by pulling a circle to a point, see Section \ref{sec:scrunch}. In particular, the limit space is not isometric to a flat torus.
\end{theorem}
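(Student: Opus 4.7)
The plan is to realize each $(M_i, \G_i)$ as a warped-product metric on $T^2 \times S^1$ of the form $\G_i = g_i + \varphi_i^2\, dt^2$, where $g_i$ is a Riemannian metric on $T^2$ and $\varphi_i \colon T^2 \to (0,1]$ is a warping factor, both taken radially symmetric around a fixed basepoint $p \in T^2$. The drawstring will be the closed geodesic $\{p\} \times S^1$, whose $\G_i$-length equals $\varphi_i(p) \cdot \text{length}(S^1) =: \epsilon_i \to 0$. Outside a disc $B_{r_0}(p)$ of fixed radius I arrange that $g_i$ equals a standard flat metric $g_0$ and $\varphi_i \equiv 1$, so $(M_i, \G_i)$ is isometric to the flat $T^3$ there and the perturbation is localized.

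The central technical step is designing $(g_i, \varphi_i)$ inside $B_{r_0}(p)$ so that the warped-product scalar curvature
\begin{equation*}
R_{\G_i} \;=\; 2 K_{g_i} \;-\; 2\,\Delta_{g_i} \varphi_i / \varphi_i
\end{equation*}
satisfies $R_{\G_i} \geq -1/i$ pointwise. A flat base $g_i = g_0$ is insufficient: setting $w_i = 1/\varphi_i$, the bound translates to $\Delta_{g_0} w_i + w_i/(2i) \geq 0$, and comparison with the Bessel function $J_0$ on the disc (valid once $r_0/\sqrt{2i} < j_{0,1}$) forces $\sup w_i \leq 1/J_0(r_0/\sqrt{2i}) = 1 + O(1/i)$, contradicting $\varphi_i(p) \to 0$. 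I therefore take $g_i = dr^2 + \rho_i(r)^2 d\theta^2$ carrying a small bump of positive Gauss curvature $K_{g_i} = -\rho_i''/\rho_i$ near $p$, whose $2K_{g_i}$ contribution is tuned to absorb the potentially large negative $-2\Delta_{g_i}\varphi_i/\varphi_i$. The profiles $(\rho_i, \varphi_i)$ are built in a multi-scale fashion --- an inner core where $\varphi_i \approx \epsilon_i$ supported by the positive-curvature spike, an intermediate annulus where $\varphi_i$ rises along a (nearly) logarithmic profile, and a smooth match to the flat exterior --- and the inequality is verified by direct ODE computation on each piece. The Gauss-Bonnet identity $\int_{T^2} K_{g_i}\,dA = 0$ forces a compensating region of negative Gauss curvature; I place it in an outer annulus within $B_{r_0}(p)$ where $\varphi_i$ is arranged to curve downward (so $-2\Delta_{g_i}\varphi_i/\varphi_i > 0$), and tune so that this positive warping contribution dominates the small negative $2K_{g_i}$ there.

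With the profiles in hand, the remaining invariants are controlled directly. Volume and diameter are uniformly bounded because the modification is supported in a region of fixed $g_0$-area whose exterior boundary geometry is unchanged. For the minA estimate, the metric $\G_i$ is $C^0$-close to the flat product on a bounded neighborhood of the drawstring, so no small closed embedded minimal surface can form in the near-flat region; combined with the exactly flat exterior, a standard second-variation/comparison argument gives a uniform lower bound $A_0 > 0$.

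Convergence to the pulled-string space $Y$ --- the quotient of the flat $T^3$ collapsing $\{p\} \times S^1$ to a point, equipped with the quotient length metric --- is the last step. Gromov-Hausdorff convergence follows from explicit $\epsilon_i$-approximations built from the identity outside the drawstring region and the collapse map inside. Volume-preserving intrinsic-flat convergence is obtained by exhibiting a filling integral $(n+1)$-current between $M_i$ and $Y$ whose mass is controlled by the $\G_i$-volume of the set $\{\varphi_i < 1\}$, which tends to zero. The main obstacle throughout is the profile construction in the second paragraph: balancing the Gauss curvature of $g_i$ against the Laplacian of $\varphi_i$ so that $R_{\G_i} \geq -1/i$ holds pointwise at every scale of the transition --- including at the smoothing of the positive-curvature bump and in the compensating negative-curvature annulus --- is the technical heart of the theorem, with the remaining geometric bounds and convergence statements being essentially routine consequences.
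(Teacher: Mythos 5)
Your overall strategy matches the paper's closely: the drawstring is realized as a radially-symmetric warped product $\G_i = g_i + \varphi_i^2\,dt^2$ supported in a disc about $\{p\}\times S^1$, the flat base is ruled out (your Bessel argument parallels the paper's Harnack remark), and the positive-curvature spike in $g_i$ together with a near-logarithmic profile for $\varphi_i$ is exactly the content of the paper's key technical result (Theorem~\ref{t:T3tube}, with $f(r) = r(1-c_1/\log(1/r))$ and $\varphi(r)=(\log(1/r))^{-c_2}$). The convergence step you describe is a concrete paraphrase of the Basilio--Dodziuk--Sormani scrunching criterion (Definition~\ref{def:scrunch}, Theorem~\ref{t:scrunch}) that the paper invokes.

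There is, however, a genuine gap in your $\minA$ argument. You write that ``the metric $\G_i$ is $C^0$-close to the flat product on a bounded neighborhood of the drawstring, so no small closed embedded minimal surface can form in the near-flat region.'' Neither clause holds. Near $\gamma=\{p\}\times S^1$ the warping factor $\varphi_i$ drops to $\epsilon_i \to 0$, so $\G_i$ is badly $C^0$-far from the flat product exactly where the construction lives; and even in a genuinely $C^0$-close metric, $C^0$ control gives no control on second fundamental forms and does not exclude small closed minimal surfaces. The actual obstacle is ruling out a closed minimal surface trapped entirely in the drawstring region. The paper handles this via property (V) of Theorem~\ref{t:T3tube}: every constant-$r$ cylinder $\{r=\text{const}\}$ is strictly mean convex with respect to the outward normal (this is $H = e^u f'/f > 0$ from Proposition~\ref{prop:curvatures} and is engineered by choosing $f$ with $f'>0$). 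By the maximum principle a closed minimal surface contained in $\overline{B(\gamma,r_*)}$ would have a one-sided tangency with the mean-convex cylinder $\partial B(\gamma,r_*)$, which is impossible; hence every closed minimal surface escapes to the flat region, where the monotonicity formula yields a uniform area lower bound. This mean-convexity mechanism is essential to the proof and is absent from your sketch, so you should build the requirement $f'>0$ (or an equivalent barrier) into your profile construction and replace the $C^0$-closeness claim with the maximum-principle argument.

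A secondary remark, not a gap: your Gauss--Bonnet compensation (placing the necessary negative Gauss curvature in an outer annulus where the warping term $-2\Delta\varphi_i/\varphi_i$ is made positive to dominate) is more delicate than what the paper does. In the paper the compensating negative-curvature annulus (the slightly acute cone of curvature $k-\epsilon$) sits where $\varphi_i\equiv 1$, so $\Delta\varphi_i=0$ there and one simply accepts $R_{\G_i}=2(k-\epsilon)=-2\epsilon\geq -1/i$ after taking $\epsilon=1/(2i)$; no compensation is needed. Your scheme could also work but requires verifying that the sign and size of $\Delta_{g_i}\varphi_i$ cooperate in that annulus, which is not automatic since $\Delta\varphi_i = \varphi_i''+(f'/f)\varphi_i'$ and the radial term $(f'/f)\varphi_i'$ is nonnegative on any interval where $\varphi_i$ is increasing.
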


Pulled string limit spaces were previously discovered by Basilio--Dodziuk--Sormani \cite{BDS} and Basilio--Sormani \cite{BS} in the context of scalar curvature lower bounds. Their examples are built with the sewing construction, where the convergence to pulled string limit spaces is due to increasingly dense shortcuts. The sewing construction leverages small tunnels, and so the examples in \cite{BDS, BS} have minA invariant converging to zero.

The present work is partially inspired by Lee--Naber--Neumayer \cite{LNN}, where analogous drawstring phenomena were discovered in dimensions 4 and above, and lead to extreme examples (see also Lee-Topping \cite{Lee-Topping}). We emphasize that previous examples are centered on codimension 3 submanifolds. The Gromov--Lawson and Schoen--Yau constructions form cylinders about points in manifolds of dimension at least 3, and the Lee--Naber--Neumayer construction forms degenerations near curves in manifolds of dimension at least 4. On the other hand, the constructions of Theorem \ref{t:T3} appear codimension 2 in nature, see the following subsection for more details.


\vspace{12pt}

The drawstring construction can also be carried out in the context of uniformly positive scalar curvature. As a result, we produce new extreme examples that probe the {\emph{minA Scalar Compactness Conjecture}} \cite[Conjecture 4.1]{SormaniConj}. This conjecture posits that $3$-manifolds of \textit{positive} scalar curvature and uniformly positive $\minA$ invariant are precompact in the volume-preserving intrinsic flat topology, and the limit spaces satisfy some weak notion of positive scalar curvature. The following result may assist in the search for the appropriate weak notion.

\begin{theorem}\label{t:S2S1}
    There exists a sequence $\{(M_i,\G_i)\}_{i=1}^\infty$ of Riemannian $S^2\times S^1$ and constants $A_0,D_0,V_0>0$ satisfying the following for all $i$:
    \begin{align}
    \begin{split}
        & \;\;\;R_{\G_i}\geq2-\frac1i,\quad D_0^{-1}\leq \mathrm{diam}(M_i,\G_i)\leq D_0,\\
        &V_0^{-1}\leq \vol(M_i,\G_i)\leq V_0,\quad \minA(M_i,\G_i)\geq A_0,
    \end{split}
\end{align}
so that $\{(M_i,\G_i)\}_{i=1}^\infty$ converges to a pulled-string space in the Gromov--Hausdorff and volume-preserving intrinsic flat senses. The limit space is obtained from a product $S^2\times S^1$ pulled along a circle $\{*\}\times S^1$. In particular, the limit space has non-Euclidean tangent cones and does not satisfy the volume-limit notion of positive scalar curvature.\footnote{We say that a metric space $(X,d)$ has non-Euclidean tangent cones if for some $p\in X$, the rescaled spaces $(X,\tfrac{1}{r}d,p)$ do not converge to $\RR^n$ as $r\to0$. A metric space satisfies the volume-limit notion of positive scalar curvature if $\lim_{r\to0}r^{-n-2}(|B(x,r)|-\omega_nr^n)\leq0$ for all points $x$ where $\omega_n$ is the volume of the unit ball in $\mathbb{R}^n$.}
\end{theorem}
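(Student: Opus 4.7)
The plan is to mirror the drawstring construction of Theorem~\ref{t:T3}, replacing the flat $3$-torus with the positively curved product $(S^2 \times S^1, g_{S^2} + dt^2)$. Fix a point $p \in S^2$, take $S^1 = \R/L\Z$ for some fixed $L > 0$, and define $\G_i = g_{S^2} + \varphi_i^2\,dt^2$ on $M := S^2 \times S^1$. The key observation is that for a warped product of this form,
\[
R_{\G_i} = R_{g_{S^2}} - 2\varphi_i^{-1}\Delta_{g_{S^2}}\varphi_i = 2 - 2\varphi_i^{-1}\Delta_{g_{S^2}}\varphi_i,
\]
so the required bound $R_{\G_i} \geq 2 - 1/i$ reduces to the \emph{same} differential inequality $\Delta_{g_{S^2}}\varphi_i \leq \varphi_i/(2i)$ that governs the flat case. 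Taking $\varphi_i$ radial around $p$, the round-sphere Laplacian of a radial function is $\varphi_i'' + \cot(r)\varphi_i'$, differing from the Euclidean radial Laplacian $\varphi_i'' + r^{-1}\varphi_i'$ only by $O(r)$ terms. The warping profile constructed for Theorem~\ref{t:T3}—transitioning from $\varphi_i(0) = \epsilon_i \to 0$ at $p$ up to $\varphi_i \equiv 1$ outside a geodesic ball of radius $r_i \to 0$—therefore transplants to $S^2$ with only routine modifications of constants.

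With such $\varphi_i$ in hand, the volume and diameter bounds are immediate: $\vol(M,\G_i) = L\int_{S^2}\varphi_i\,dA \to 4\pi L$ since $\varphi_i \to 1$ in $L^1$, and $\pi \leq \diam(M,\G_i) \leq \pi + L/2$ because the slice $S^2 \times \{t\}$ is isometric to $(S^2,g_{S^2})$. For the $\minA$ bound, I would consider any closed embedded minimal surface $\Sigma \subset M$ and split by homology class in $H_2(M;\Z) \cong \Z$. If $\Sigma$ represents a nonzero multiple of the generator (the slice class), then the projection $\pi_1: M \to S^2$ is $1$-Lipschitz (since $\varphi_i \leq 1$) and $\pi_1|_\Sigma$ has nonzero degree, so $|\Sigma|_{\G_i} \geq 4\pi$. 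If $\Sigma$ is null-homologous, note that the radial tubes $\partial B_\rho(p) \times S^1$ have outward mean curvature $\cot(\rho) + \varphi_i'(\rho)/\varphi_i(\rho)$, which is strictly positive for all $\rho < \pi/2$ as long as $\varphi_i' \geq 0$. These tubes thus form a mean-convex foliation, and standard maximum principle arguments preclude $\Sigma$ from lying strictly within the drawstring region $B_{r_i}(p) \times S^1$; any such $\Sigma$ must therefore intersect the unperturbed region where the metric is simply $g_{S^2} + dt^2$, and a classical analysis of minimal surfaces in this product metric yields a uniform $\minA$ lower bound.

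The convergence to the pulled-string space $X$—the metric quotient of $(S^2 \times S^1, g_{S^2} + dt^2)$ obtained by collapsing $\{p\} \times S^1$ to a point—follows as in Theorem~\ref{t:T3}, using a Lipschitz quotient map $\Phi_i: (M,\G_i) \to X$ whose distortion tends to $0$ outside the shrinking drawstring tube. Concerning the tangent cone at the apex $[\{p\} \times S^1] \in X$: the ball of radius $r$ about the apex is the image of $B_r(p) \times S^1$, with limiting volume $L \cdot |B_r(p)|_{g_{S^2}} \sim L\pi r^2$ as $r \to 0$. This $r^2$ scaling immediately shows that tangent cones at the apex are non-Euclidean, and also that
\[
r^{-5}\big(|B(x,r)| - \omega_3 r^3\big) \sim L\pi r^{-3} \to +\infty,
\]
violating the volume-limit notion of positive scalar curvature. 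The main technical obstacle is the $\minA$ lower bound for null-homologous minimal surfaces: while the mean-convex foliation rules out surfaces confined to the drawstring, one needs to ensure the mean-convex barriers persist out to a uniform scale (independent of $i$) so that any hypothetical small minimal surface can be localized into a region where the unperturbed product analysis applies.
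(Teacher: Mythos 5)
The construction you propose cannot work, and the paper explicitly anticipates this obstruction. You fix the base metric to be the round $g_{S^2}$ and alter only the warping factor $\varphi_i$, demanding $\varphi_i \equiv 1$ outside $B(p,r_i)$, $\varphi_i(p)\to 0$, and $\Delta_{g_{S^2}}\varphi_i \leq \tfrac{1}{2i}\varphi_i$. But Moser's weak Harnack inequality for positive supersolutions of $\Delta - \tfrac{1}{2i}$ rules this out: on a fixed ball $B(p,r_0)$, the inequality $\inf_{B(p,r_0/4)}\varphi_i \geq c\,\|\varphi_i\|_{L^1(B(p,r_0/2))}$ holds with a constant $c$ that is uniform as $i\to\infty$, while the right side is bounded below by the fact that $\varphi_i\equiv 1$ on $B(p,r_0/2)\setminus B(p,r_i)$. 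Hence $\varphi_i(p)$ is uniformly bounded away from zero, contradicting the drawstring. This is precisely the content of the footnote in Section 1.3: \emph{it is not possible to form a drawstring with a fixed base metric}; the argument is local, so it applies equally to $g_{S^2}$.

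You have also misread the mechanism of Theorem~\ref{t:T3}: the construction there does not keep the base $2$-torus flat. It replaces the base metric near the drawstring curve with $e^{-2u}(dr^2+f(r)^2d\theta^2)$, whose Gauss curvature is of order $r^{-2}|\log r|^{-2-2c_2}$ — a carefully engineered curvature concentration that is exactly what allows $\varphi=e^u$ to decay. The same is true in the $S^2\times S^1$ case: the paper invokes Theorem~\ref{t:T3tube} with $k=1$, producing a doubly-warped-product drawstring that also modifies the base. Your $\minA$ argument is salvageable in spirit (the paper likewise pushes a hypothetical small minimal surface out of the drawstring region via the mean-convex tubes from condition (V) of Theorem~\ref{t:T3tube}, then applies the monotonicity formula at a point in the unperturbed product), but since the metric you build does not exist, the verification has nothing to attach to. To repair the proof you would need to replace the single-warped-product ansatz with the doubly-warped drawstring model, i.e.\ run Theorem~\ref{t:T3tube} with $k=1$ as the paper does, and then the remaining steps (volume, diameter, $\minA$, scrunching via Theorem~\ref{t:scrunch}, and the tangent-cone computation at the apex) go through much as you outline.
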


\vspace{6pt}

\noindent\textbf{1.3. Further geometry of drawstrings.} We now give a more detailed geometric description of the drawstring examples. The metrics in Theorem \ref{t:T3} take the warped product form $\G=\g+\varphi^2dt^2$ where $\g$ and $\varphi$ are a metric and function on a $2$-torus. The warping factor $\varphi$ equals 1 outside the disc $B^{\g}(x_0,\epsilon)$ and decreases to $\epsilon$ at $x_0$, where $x_0$ is a basepoint. The central challenge is to achieve this task while keeping the scalar curvature almost nonnegative. In light of the formula $R_{\G}=2(
K_g-\varphi^{-1}\Delta_g\varphi)$, we must create sufficiently large Gauss curvature for the base metric.\footnote{For example, it is not possible to form a drawstring with flat base metric $\g$: otherwise the warping factor satisfies $\Delta_g\varphi\leq\epsilon\varphi$, and Moser's Harnack inequality poses a strong constraint on the minimum of $\varphi$.} In the higher dimensional constructions of Lee--Naber--Neumayer \cite{LNN}, the $(n-1)$-dimensional base metric forms a slightly acute cone near $x_0$. When $n\geq4$ the cone-like region contributes tremendously positive curvature $O(1/r^2)$ for small radius $r$, that turns out to be sufficient for decreasing $\varphi$ to almost zero.

In the present $3$-dimensional work, the above approach does not work directly since $2$-dimensional cones are flat. On the other hand, the tip of an acute $2$-dimensional cone still carries positive curvature in the distributional sense. Our construction is done by properly smoothing the cone vertex, essentially reflecting that the curvature concentrated there is large enough to allow rapid decrease of $\varphi$. The smoothing is modeled on the following metric on $D^2\times\mathbb{R}$:
\begin{align}\label{eq-intro:metric}
    \G=\varphi(r)^{-2}\big(dr^2+f(r)^2d\th^2\big)+\varphi(r)^2dt^2,
\end{align}
where $(r,\theta)$ are polar coordinates on the disc, with 
\begin{align}\label{eq-intro:f_and_u}
     f(r)=r\left(1-\frac{c_1}{\log(1/r)}\right), \qquad \varphi(r)=\left(\log\frac1r\right)^{-c_2},
\end{align}
where $c_1,c_2$ are small positive constants to be carefully chosen. The scalar curvature of such metric is explicitly computed to be 
\begin{equation}\label{eq-intro:scalar_expression}
    R_{\G}=\frac2{r^2(\log 1/r)^{2+2c_2}}\Big[\frac{c_1(c_1+2)}{\log(1/r)-c_1}+c_1-c_2^2\Big],
\end{equation}
which is positive for small $r$ when $c_1\geq c_2^2$. 
The horizontal disks $D^2\times\{t\}$ of \eqref{eq-intro:metric} represent the smoothed cones and possess Gauss curvature on the order of $O\big(1/(r^2|\log(r)|^{2+2c_2})\big)$.

The key technical result, Theorem \ref{t:T3tube}, locally modifies the product of a constant curvature surface $S^2_k$ with a line to create a drawstring, while keeping scalar curvature at least $2(k-\epsilon)$. Two independent proofs of this result are given. While the two constructions are technically different, they both make use of the metric \eqref{eq-intro:metric} and are ultimately similar in spirit.

The first proof of Theorem \ref{t:T3tube}, given in Section \ref{sec:epsilonpipe}, is geometric and reflects the heuristic of smoothing cones that we mentioned above. The process takes place on a cylindrical region about a central line in the product $S^2_k\times\mathbb{R}$. We remove a small cylindrical neighborhood of a central line and glue in a metric $\G_1=g'+dt^2$, where $g'$ is a metric on the disk that has constant curvature $k-\epsilon$ and forms a slightly acute cone at the origin. With appropriate choices of the parameters $\epsilon, c_1,c_2$, and the cone angle, one can further remove a smaller central region in $\G_1$ and glue in the piece \eqref{eq-intro:metric} in a $C^{1,1}$ manner. The singularities in the resulting metric are mild and can be resolved. We remark that the slightly acute cone is the only region where the scalar curvature is less than $2k$. More details on this construction can be found below Theorem \ref{t:T3tube}. The second proof of Theorem \ref{t:T3tube} is presented in Section \ref{sec:analytic}, where a single explicit formula for the drawstring metric is given and further smoothing is not required. This formula directly concatenates the metric \eqref{eq-intro:metric} to the product metric via cutoff functions. While the second proof is computationally simpler, it is geometrically less transparent.

\begin{remark}\label{rem:ricci}
    The drawstring phenomenon is unique to scalar curvature, and is not possible in the setting of Ricci curvature lower bounds. Roughly speaking, if one creates a drawstring in the $\epsilon$-neighborhood of a closed geodesic $\gamma$, then for a point $x\in\gamma$ we have $|B(x,2\epsilon)|\geq C\epsilon^{n-1}$. This would violate the volume comparison theorem if the Ricci curvature is uniformly bounded below when $\epsilon\to0$.
\end{remark}


\noindent\textbf{1.4. A $W^{1,p}$-stability statement.} Given that the minA-IF torus stability conjecture fails, it is natural to wonder if one may replace or enhance either the minA lower bound or the intrinsic flat topology in order to formulate an improved conjecture. While this task is essential, the appropriate modification unfortunately remains unclear to the present authors. On the other hand, within the class of warped products we are able to describe a natural and sharp version of Scalar Torus Stability. More specifically, we prove convergence to a flat $3$-torus in $W^{1,p}$-topology for $p\in[1,2)$.

\begin{theorem}\label{thm-intro:convergence_T3}
    Let $\{(M_i,\G_i)\}_{i=1}^\infty$ be a sequence of Riemannian $3$-tori of the form $\G_i=g_i+\varphi_i^2dt^2$, where $g_i$ and $\varphi_i>0$ are metrics and functions on $2$-tori, and $t$ parameterizes a vertical circle whose length may depend on $i$. Assume the following hold for all $i$:
    \begin{equation}\label{eq-intro:convergence_T31}
        R_{\G_i}\geq-2/i,\quad \diam(M_i,\G_i)\leq D_0,\quad \vol(M_i,\G_i)\leq V_0,\quad \minA(M_i,\G_i)\geq A_0.
    \end{equation}
    Then there is a flat torus $(T^3,\G_\infty)$ so that a subsequence of $\G_i$ converges to $\G_\infty$ in the $W^{1,p}$-sense for any $1\leq p<2$. Specifically, there are diffeomorphisms $\Phi_i:(T^3,\G_\infty)\to(M_i,\G_i)$ such that
    \begin{equation}\label{eq-intro:convergence_T32}
        ||\G_\infty-\Phi_i^*\G_i||_{W^{1,p}(\G_\infty)}\to0\ \ \text{as}\ \ i\to\infty,\ \ \text{ for  }\ \ 1\leq p<2.
    \end{equation}
    In particular, for all $q\geq1$ we have $||\G_\infty-\Phi_i^*\G_i||_{L^q(\G_\infty)}\to0$ as $i\to\infty$.
\end{theorem}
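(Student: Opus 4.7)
The core identity is the warped-product scalar curvature formula $R_{\G_i} = 2K_{g_i} - 2\varphi_i^{-1}\Delta_{g_i}\varphi_i$, which combined with Gauss--Bonnet on the base $T^2$ gives integral smallness of $\nabla\log\varphi_i$. Using the warped-product scaling freedom $(\varphi,T)\mapsto(\lambda\varphi, T/\lambda)$, I first normalize so that $\max_{T^2}\varphi_i = 1$; the diameter bound then forces the $S^1$-factor length to satisfy $T_i \leq 2D_0$. Integrating $R_{\G_i} \geq -2/i$ over $(T^2, g_i)$ and using $\int K_{g_i}\,dA_{g_i} = 0$ together with the identity $\int\varphi_i^{-1}\Delta_{g_i}\varphi_i\,dA_{g_i} = \int|\nabla\log\varphi_i|^2_{g_i}\,dA_{g_i}$ yields the fundamental estimate
\[ \int_{T^2}|\nabla\log\varphi_i|_{g_i}^2\,dA_{g_i} \leq \frac{|T^2|_{g_i}}{i}. \]

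The second stage uses $\minA$ to control the geometry of the base. The slice $T^2\times\{t\}$ is totally geodesic, hence minimal, with area $|T^2|_{g_i}\geq A_0$. For each nontrivial homology class $[\gamma]\otimes[S^1]\in H_2$, area minimization produces a minimal surface with area $\geq A_0$; comparing this to the tube $\gamma\times S^1$ of area $T_i\int_\gamma\varphi_i\,dl_{g_i} = T_i L_{\tilde g_i}(\gamma)$, where $\tilde g_i := \varphi_i^2 g_i$, I obtain the systolic estimate $\mathrm{sys}_{\tilde g_i}([\gamma])\geq A_0/T_i$. Since $\varphi_i\leq 1$, one has $|T^2|_{\tilde g_i} = \int\varphi_i^2\,dA_{g_i}\leq \int\varphi_i\,dA_{g_i} = \vol(M_i,\G_i)/T_i \leq V_0/T_i$. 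Combining with Loewner's inequality $\mathrm{sys}_{\tilde g_i}^2\leq C|T^2|_{\tilde g_i}$ then forces a uniform lower bound $T_i\geq c_1>0$. Consequently $|T^2|_{\tilde g_i}$ is bounded and the conformal modulus $[\tilde g_i] = [g_i]$ lies in a compact region of Teichm\"uller space, so after modular diffeomorphisms the flat representatives $\bar g_i\in[g_i]$ converge smoothly to a limit $\bar g_\infty$. A Moser--Trudinger bootstrap, using the integral estimate above and the constraint $\log\varphi_i\leq 0$, then produces an upper bound on $|T^2|_{g_i}$ as well.

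Finally, a direct computation gives $\varphi_i^2 K_{\tilde g_i} = K_{g_i} - \varphi_i^{-1}\Delta_{g_i}\varphi_i + |\nabla\log\varphi_i|^2 \geq -1/i$, so combined with $\int K_{\tilde g_i}\,dA_{\tilde g_i} = 0$ and the area bound from Stage 2 one concludes $\|K_{\tilde g_i}\|_{L^1(\tilde g_i)}\leq 2|T^2|_{g_i}/i\to 0$. Writing $\tilde g_i = e^{2w_i}\bar g_i$ via uniformization, the Gauss equation $-\Delta_{\bar g_i}w_i = e^{2w_i}K_{\tilde g_i}$ gives $\|\Delta_{\bar g_i}w_i\|_{L^1(\bar g_i)}\to 0$, and the 2-dimensional Sobolev embedding $W^{2,1}(T^2)\hookrightarrow W^{1,p}(T^2)$ for $1\leq p<2$ yields convergence $w_i\to$ const in $W^{1,p}$. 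Simultaneously, $\log\varphi_i\to$ const in $W^{1,2}\hookrightarrow W^{1,p}$ by Poincar\'e and the integral estimate. Hence $\varphi_i$ converges to a constant and $g_i = \varphi_i^{-2}\tilde g_i$ converges to a flat limit, both in $W^{1,p}$. Assembling via diffeomorphisms $\Phi_i = (\phi_i, \mathrm{id}_{S^1})$, where $\phi_i$ realizes the uniformization and modular normalization, yields the $W^{1,p}$-convergence $\Phi_i^*\G_i\to\G_\infty$.

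\textbf{Main obstacle.} The most delicate step is converting the $3$-dimensional $\minA$ hypothesis into $2$-dimensional geometric information on the base --- the systolic bound on $\tilde g_i$, the lower bound on $T_i$, and the compactness of the conformal modulus. Once these are in hand, the remainder is standard two-dimensional uniformization together with sharp $L^1$-elliptic estimates on $T^2$.
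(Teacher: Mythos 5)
Your proposal follows the paper's broad architecture — warped-product scalar curvature formula yielding integral smallness of $\nabla\log\varphi_i$, minA $\Rightarrow$ systolic control on the conformally changed base $\tilde g_i=\varphi_i^2 g_i$, Loewner for the lower bound on $T_i$, moduli compactness, uniformization, and $L^1$-elliptic estimates on flat tori for the $W^{1,p}$ convergence of the conformal factors. The three stages you describe are indeed the right stages. However, there are two genuine gaps at the stage you yourself flagged as most delicate.

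\textbf{Gap 1: the upper bound on $T_i$.} You normalize $\max_{T^2}\varphi_i=1$ and assert that the diameter bound then gives $T_i\leq 2D_0$. This does not follow. The fibre circle $\{x_0\}\times S^1$ through a max-point of $\varphi_i$ is indeed a closed geodesic of length $T_i$, but a closed geodesic in a space of diameter $\leq D_0$ can be arbitrarily long; any competing path from $(x_0,0)$ to $(x_0,T_i/2)$ can shortcut through regions where $\varphi_i$ is small, and you have not yet established $\inf\varphi_i$ bounded below (indeed the whole point is that $\varphi_i$ might a priori degenerate). The paper avoids this by normalizing $\int_\Sigma\varphi_i^2\,dA_{g_i}=1$ instead and deriving the upper bound for $T_i$ from the \emph{volume} bound together with a Moser--Trudinger estimate for $\int e^{-u_i}\,dA_i$ (see Claim 1 in Section 6). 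Note also that the moduli compactness you invoke downstream — via the extremal-length inequality $\Imm(z_i)^{-1}\geq \mathrm{sys}_{\tilde g_i}^2/|T^2|_{\tilde g_i}\geq A_0^2/(T_iV_0)$ — needs precisely this upper bound on $T_i$, so the gap propagates.

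\textbf{Gap 2: the area upper bound on $|T^2|_{g_i}$.} You claim a ``Moser--Trudinger bootstrap'' gives $|T^2|_{g_i}\leq C$. This is essentially circular as stated: the uniform Moser--Trudinger inequality carries a constant depending on the isoperimetric ratio $\IN(\Sigma,g_i)$, and the lower bound on $\IN$ is itself proved in the paper \emph{using} the area upper bound (Lemma 5.3, item 3 relies on items 1 and 2). What actually delivers the area bound — and the systolic and isoperimetric bounds — is not the integral estimate $\int|\nabla\log\varphi_i|^2\leq |\Sigma|/i$ by itself but the stronger \emph{spectral/stability} condition: the warped-product relation $\Delta_{g_i}\varphi_i\leq(K_{g_i}+i^{-1})\varphi_i$ with $\varphi_i>0$ gives $\lambda_1(-\Delta_{g_i}+K_{g_i})\geq-1/i$ (Corollary 5.2). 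Feeding radial test functions built from distance functions into this stability inequality (Lemma 5.3 and the inequalities \eqref{eq-aux:fund_eq_nonsep}, \eqref{eq-aux:fund_eq_sep}) is what yields the systole lower bound, the area upper bound, and the isoperimetric lower bound in the correct logical order. This spectral input is absent from your proposal and is the main missing idea.

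The remainder of your proposal — $\varphi_i^2 K_{\tilde g_i}\geq -1/i$ hence $\|K_{\tilde g_i}\|_{L^1(\tilde g_i)}\leq 2|T^2|_{g_i}/i$, then the Gauss equation on the flat representative and the $L^1$-elliptic / Brezis--Merle estimates on $T^2$ to get $\|\nabla\psi_i\|_{L^p}\to 0$ for $p<2$ — is correct and matches the paper's mechanism, provided the moduli are compactified and the averages $\bar u_i,\bar\psi_i$ are controlled (the paper's Claim 4; you would also need this to conclude convergence of $\varphi_i$ to a genuine constant, not merely $\nabla\log\varphi_i\to0$).
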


\begin{remark}
    As a consequence of the warped product form of the drawstring examples and Theorem \ref{thm-intro:convergence_T3}, the manifolds in Theorem \ref{t:T3} converge to a flat torus in the $W^{1,p}$-sense, for $1\leq p<2$.
\end{remark}

The diffeomorphisms in Theorem \ref{thm-intro:convergence_T3} are built by uniformizing the underlying $2$-tori of $(M_i,\G_i)$. A primary step in the proof is to establish that the corresponding flat $2$-tori lie within a compact subset in the moduli space of flat tori. Next, the Moser-Trudinger and Brezis-Merle inequalities are employed to show that the warping factors and the conformal factors (that arise from uniformization) are approaching constant values in $W^{1,p}$-norm when the scalar curvature is almost nonnegative. The minA lower bound is used to control the range of the $t$-coordinate, as well as to prevent the collapsing of the flat $2$-tori. We remark that the restriction $p<2$ in \eqref{eq-intro:convergence_T32} is expected to be sharp. This is due to an example discovered by Sormani--Tian--Wang \cite{STW23}, where the scalar curvature compactness conjecture is analyzed in the class of warped products $S^2\times S^1$ over the round $2$-sphere. In that construction, the warping factors resemble a Green's function and therefore diverge in $W^{1,2}$-norm. 

Theorem \ref{thm-intro:convergence_T3} is similar in spirit to a number of previous works analysing warped product manifolds. In \cite[Theorem 1.4]{Bryden}, Bryden obtains a $W^{1,p}$-convergence, $p\in[1,2)$, to flat $3$-space for a class of axially-symmetric asymptotically Euclidean manifolds with non-negative scalar curvature and ADM masses tending to $0$. Sequences of $3$-dimensional warped products over a fixed flat $2$-torus are studied by Allen-Hernandez-Vazquez-Parise-Payne-Wang in \cite{AHLP}, and the minA-IF torus stability conjecture is verified in this context. Tian-Wang \cite{TW} study the minA-IF scalar compactness conjecture for $3$-dimensional warped products over the round unit $2$-sphere, showing an $L^q$-convergence statement for all $q<\infty$. In these works the underlying $2$-dimensional geometry is fixed, thereby excluding the phenomena of Theorems \ref{t:T3} and \ref{t:S2S1}.

The minA-IF torus stability conjecture has also been studied outside the context of warped products. In \cite{Allen_2021} Allen considers metrics conformal to flat tori (in arbitrary dimensions) which satisfy a type of no-bubbling condition, confirming a special case of the stability conjecture. In the article \cite{Chu-Lee_2021}, Chu-Lee show a torus stability result for Riemannian tori which are conformally related by a uniformly $L^p$ factor to metrics of bounded geometry. Using the Yamabe flow, they obtain volume preserving intrinsic flat convergence to a flat torus in all dimensions. In a separate work, Chu-Lee analyzed the Ricci flow on metrics that satisfy pointwise bounds above and below by a background metric in the context of uniform $W^{1,p}$-bounds, proving an $L^p$ convergence of the metric tensors \cite[Theorem 5.1]{Chu-Lee_2022}. The $\minA$-IF torus stability conjecture has also considered the case where the almost non-negative scalar curvature tori arise as graphs over flat tori. In \cite[Theorem 1.4]{CKP}, Cabrera Pacheco-Ketterer-Perales establishes a torus stability result for a special class of such tori. \\

This paper is organized as follows. The short preliminary Section \ref{sec:preliminary} introduces pulled-string spaces and a criterion due to Basilio--Dodziuk--Sormani for convergence to such spaces. The main technical constructions behind our drawstring examples are given in Sections \ref{sec:epsilonpipe} and \ref{sec:analytic} where two separate proofs of Theorem \ref{t:T3tube} are given. In Section \ref{sec:mainthm} we prove Theorems \ref{t:T3} and \ref{t:S2S1}. The proof of Theorem \ref{thm-intro:convergence_T3} is given in Section \ref{sec:lp}. Finally, Appendix \ref{sec:L1_est} contains some estimates that are necessary for the proof of Theorem \ref{thm-intro:convergence_T3}.

\vspace{12pt}

\textbf{Acknowledgements.} The authors would like to thank Professors Man-Chun Lee, Blaine Lawson, Brian Allen, and Edward Bryden for insightful discussions. We owe a debt of gratitude to Professor Christina Sormani for her inspiring contributions to this area, general encouragement, and remarkable support for young mathematicians. We also thank Christina Sormani for suggesting we find a more computationally transparent proof, leading us to the analytic proof presented in Section \ref{sec:analytic}.









\section{Preliminaries}\label{sec:preliminary}

{\bf{Notation:}} Given a subset $U$ of a metric space $(X,d^X)$, we use $B^X(Y,r)$ (or simply $B(Y,r)$ when the ambient metric space is clear in context) to denote $r$-distance neighborhood $B^X(Y,r)=\{x\in X: d^X(x,Y)<r\}$. If $(X,d^X)$ arises from a Riemannian manifold $(M,\G)$, we use the notation $B^{\G}(Y,r)$ for distance neighborhoods. We always use boldface symbol $\G$ to denote metrics on 3-manifolds, and use plain symbols $g$ (as well as $\cg,\fg$ which are used in Section \ref{sec:lp}) for metrics on surfaces.

\subsection{Integral current spaces and the Sormani--Wenger intrinsic flat metric}

In \cite{AK} Ambrosio-Kirchheim introduced the notion of {integral currents} on metric spaces, which Sormani--Wenger \cite{Sv2} used to define {\emph{integral current spaces}}. Such spaces are special metric spaces equipped with extra data allowing them to perform calculations that generalize the integration of $n$-forms in a manifold. Sormani--Wenger constructed a natural topology on the space of integral current spaces, induced by a metric called the {\emph{Sormani--Wenger intrinsic flat}} or simply {\emph{intrinsic flat}} distance. The present work will not require any technicalities of these constructions, but the interested reader is referred to \cite{Sv2}.

\subsection{Scrunching and pulling threads in metric spaces}\label{sec:scrunch}

Suppose we are given a metric space $(X,d^X)$ and a compact curve $\sigma\subset X$. One may use this data to construct a {\emph{pulled string}} space $(Y,d^Y)$ by, informally speaking, declaring $\sigma$ to have vanishing length. Precisely, the metric space $(Y,d^Y)$ is given by
\begin{equation}
    Y=(X\setminus \sigma)\cup\{\sigma\},\quad d^Y(x,y)=    \min\left\{d^X(x,y),d^X(x,\sigma)+d^X(\sigma,y)\right\}.
\end{equation}
We refer to $(Y,d^Y)$ as {\emph{created from $(X,d^X)$ by pulling $\sigma$ to a point}}. In the case where $(X,d^X)$ carries an integral current structure, such as the case of a Riemannian manifold, an associated structure is induced on the pulled-string space, see \cite{BDS} for details.

The next task is to articulate a principle that allows us to estimate the intrinsic flat distance between our constructions and pulled-string spaces. The following definition describes, in robust terms, situations where a string is being drawn to a point over a sequence of spaces.
\begin{definition}\cite{BDS,BS}\label{def:scrunch} {\ }
    Let $(M,\G)$ be an oriented closed Riemannian manifold with a given compactly embedded curve $\sigma\subset M$. A sequence of oriented closed Riemannian manifolds $\{(N_i,\G_i)\}_{i=1}^\infty$ is said to {\emph{scrunch $\sigma$ down to a point}} if there exists compact subsets $U_i\subset N_i$ and numbers $\varepsilon_i,\delta_i,H_i>0$ which satisfy the following for all $i=1,2,\dots$:
    \begin{enumerate}[(i)]
        \item $(N_i\setminus U_i,\G_i)$ is isometric to $(M\setminus B^\G(\sigma,\delta_i),\G)$,
        \item $\vol(U_i,\G_i)\leq\vol\left(B^\G(\sigma,\delta_i),\G\right)(1+\varepsilon_i)$ and $\vol(N_i,\G_i)\leq\vol\left(M,\G\right)(1+\varepsilon_i)$,
        \item $\diam(U_i,\G_i)\leq H_i,$\footnote{In \cite{BDS,BS}, there is an extra stipulation that $2\delta_i<H_i$. If one has a sequence scrunching a curve $\sigma$ to a point in the sense of Definition \ref{def:scrunch}, one may achieve this extra condition by re-choosing $H_i$ to be $\max\{3\delta_i,\diam(U_i,\G_i)\}$.}
    \end{enumerate}
    with $\varepsilon_i,\delta_i,H_i\to0$ as $i\to\infty$.
\end{definition}

It is a fundamental observation of Basilio--Dodziuk--Sormani \cite{BDS} that sequences which scrunch curves to a point must converge to a pulled-string space.

\begin{theorem}\cite[Theorem 2.5]{BS}\label{t:scrunch}
 Let $(M,\G)$ be an oriented closed Riemannian manifold with a given compactly embedded curve $\sigma\subset M$. If a sequence of oriented closed Riemannian manifolds $\{(N_i,\G_i)\}_{i=1}^\infty$ scrunches $\sigma$ to a point, then $(N_i,\G_i)$ converges to the space created from $(M,\G)$ by pulling $\sigma$ to a point in both the Gromov--Hausdorff and intrinsic flat senses.
\end{theorem}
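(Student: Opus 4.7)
The plan is to construct explicit collapse maps $\phi_i\colon N_i\to Y$ that act as the given isometry on the large piece $N_i\setminus U_i$ and crush the small piece $U_i$ to the singular point $\{\sigma\}\in Y$. Gromov-Hausdorff convergence will follow from a direct comparison of path lengths in $N_i$ and $Y$ using (i) and (iii), while intrinsic flat convergence will be obtained by realizing both spaces inside a common ambient space and producing an explicit filling current whose mass is controlled by (ii) and (iii).

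\textbf{The GH estimate.} Let $\iota_i\colon N_i\setminus U_i\to M\setminus B^\G(\sigma,\delta_i)$ denote the isometry from (i) and define
\begin{equation*}
\phi_i(x)=\begin{cases}\iota_i(x), & x\in N_i\setminus U_i,\\ \{\sigma\}\in Y, & x\in U_i,\end{cases}
\end{equation*}
where $M\setminus B^\G(\sigma,\delta_i)$ is viewed inside $Y$ via the natural inclusion. To bound $|d^{N_i}(x,y)-d^Y(\phi_i(x),\phi_i(y))|$, I would analyze rectifiable paths in $N_i$: a path avoiding $U_i$ transfers under $\iota_i$ to a path in $M\setminus B^\G(\sigma,\delta_i)$ of the same length, while a path entering $U_i$ meets $\partial U_i$ at points that correspond under $\iota_i$ to points on $\partial B^\G(\sigma,\delta_i)$, hence within $\delta_i$ of $\sigma$. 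Taking the infimum over these two cases reproduces the definition of $d^Y$ up to additive error $2\delta_i$ on the lower-bound side, and a competitor path in $N_i$ that traverses $U_i$ at cost at most $H_i$ (using (iii)) gives the matching upper bound. This yields an $\eta_i$-isometry with $\eta_i\leq 2\delta_i+H_i\to 0$; surjectivity up to distance $\delta_i$ is automatic since $\phi_i(U_i)=\{\sigma\}$ already covers the singular point.

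\textbf{The IF estimate.} For intrinsic flat convergence I would realize $(N_i,\G_i)$ and $(Y,d^Y)$ inside a common metric space $Z_i$, built by gluing them along $\iota_i$ on the common piece $N_i\setminus U_i\cong M\setminus B^\G(\sigma,\delta_i)$. On this common piece, the underlying Ambrosio-Kirchheim currents coincide, so the difference $T_i:=[N_i]-[Y]$ is supported on the union of $U_i$ and the image of $B^\G(\sigma,\delta_i)$ in $Y$, with mass at most $\vol(U_i)+\vol(B^\G(\sigma,\delta_i))\to0$ by (ii) and $\delta_i\to 0$. I would then construct an integral $(n+1)$-current $A_i$ with $\partial A_i=T_i$ by coning each of these collapsing pieces to the common image of $\{\sigma\}$; both pieces have diameter bounded by $H_i+2\delta_i$, so a standard cone mass bound gives
\begin{equation*}
\mathbf{M}(A_i)\leq(H_i+2\delta_i)\bigl(\vol(U_i)+\vol(B^\G(\sigma,\delta_i))\bigr)\longrightarrow 0,
\end{equation*}
which yields $d_{\mathcal F}((N_i,\G_i),(Y,d^Y))\to 0$.

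\textbf{Main obstacle.} The delicate step is verifying that the gluing $Z_i$ supports a well-defined integral current structure compatible both with $[N_i]$ and with the inherited current structure on $(Y,d^Y)$, and that the coning operation over the collapsed pieces genuinely produces an integral $(n+1)$-current with the prescribed boundary. Once this Ambrosio-Kirchheim bookkeeping is in place, the mass bounds are a direct consequence of hypotheses (ii) and (iii); indeed the whole point of the scrunching framework encoded in Definition \ref{def:scrunch} is to abstract precisely the quantitative hypotheses that make the filling estimate vanish in the limit.
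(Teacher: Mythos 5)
The paper does not prove this theorem: it is imported verbatim from Basilio--Sormani \cite[Theorem 2.5]{BS}, so there is no internal proof to compare against. That said, your outline captures the right high-level strategy — a collapse map $\phi_i$ that is the identity on the large piece and crushes $U_i$ to the singular point gives the GH approximation, and an integral current estimate on the small piece gives the flat distance bound — and this is essentially the shape of the argument in \cite{BDS,BS}.

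There is, however, one genuine gap in the IF step, and it is not the one you flag. You propose to form $Z_i$ by ``gluing $N_i$ and $Y$ along $\iota_i$'' and assert that both spaces then sit isometrically inside $Z_i$. But a naive metric gluing of two spaces along a common subset does \emph{not} in general give isometric embeddings of both factors: a point pair in $N_i\setminus U_i$ could acquire a shortcut through the copy of $Y$ (which, near $\partial B^\G(\sigma,\delta_i)$, lets one jump through the singular point at near-zero cost), and symmetrically a pair in $Y$ could shortcut through $U_i$. Since the Sormani--Wenger distance is an infimum over genuine isometric embeddings into a common complete separable metric space, this must be addressed head-on; the standard remedies are either to insert an explicit additive ``slack'' $\varepsilon_i$ in the cross-distances of the gluing (large enough to cancel the potential shortcut, small enough to vanish as $i\to\infty$) or to invoke a filling-volume lemma tailored to almost-isometric pieces. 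Your ``main obstacle'' paragraph instead worries about whether the Ambrosio--Kirchheim bookkeeping is compatible; that is a real but comparatively routine matter, whereas the isometric-embedding issue is where the quantitative hypotheses (i)--(iii) must actually be used again.

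Two smaller points. First, once you have $T_i=[N_i]-[Y]$ with $\mathbf{M}(T_i)\le \vol(U_i)+\vol(B^\G(\sigma,\delta_i))\to0$ inside a \emph{bona fide} common ambient space, you are already done: the flat distance allows the decomposition $[N_i]-[Y]=A+\partial B$ with $A=T_i$ and $B=0$, so the coning step is superfluous (and, if you did want to cone, one would also need to verify that a cone with apex at $\{\sigma\}$ remains an integral current of the ambient space, which is itself nontrivial). Second, in the GH estimate the distortion bound $2\delta_i+H_i$ is fine but not tight — the lower and upper comparisons contribute $2\delta_i$ and $H_i$ respectively, so $\max\{2\delta_i,H_i\}$ already works — and your surjectivity remark should say that points of $B^\G(\sigma,\delta_i)\setminus\sigma$ in $Y$ lie within $\delta_i$ of the image, not merely that $\phi_i(U_i)$ covers the singular point.
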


\subsection{The curvature of some warped products}
To conclude this section, we describe properties of the class of $3$-dimensional Riemannian metrics studied throughout the paper. To fix the setting, suppose we are given a number $r_*$ and two smooth functions $f,u:[r_*,\infty )\to\mathbb{R}$ such that $f>0$. With this data, one may consider the $3$-dimensional Riemannian metric
\begin{equation}\label{eq-geometry:mainwarpedprod}
    \G=e^{-2u(r)}\left(dr^2+f(r)^2d\theta^2\right)+e^{2u(r)}dt^2.
\end{equation}
We take this opportunity to record an elementary but essential computation that we make frequent use of. Notice that the special form \eqref{eq-geometry:mainwarpedprod} ensures that the second derivative of $u$ does not appear in the scalar curvature formula.

\begin{proposition}\label{prop:curvatures}

The scalar curvature of $\G$ given in \eqref{eq-geometry:mainwarpedprod} is 
\begin{equation}\label{e:scalg}
    R_{\G}=2e^{2u}\Big(-\frac{f''}f-(u')^2\Big),
\end{equation}
where $'$ denotes $\tfrac{d}{dr}$, and
the mean curvatures of constant $r$-coordinate surfaces in the positive-$r$ direction are given by 
\begin{equation}\label{e:meang}
    H=e^u\frac{f'}{f}.
\end{equation}
\end{proposition}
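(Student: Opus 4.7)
The key observation is that the metric $\G$ in \eqref{eq-geometry:mainwarpedprod} is simultaneously (i) a warped product of a $2$-dimensional base $(N^2,g_2)$ with a $1$-dimensional fiber, where
\[
g_2=e^{-2u(r)}\bigl(dr^2+f(r)^2d\theta^2\bigr),\qquad \G=g_2+\varphi^2\,dt^2,\qquad \varphi=e^{u},
\]
and (ii) the base $g_2$ is a conformal rescaling of the explicit rotationally symmetric metric $\tilde g=dr^2+f(r)^2d\theta^2$. The plan is to exploit both facts and then observe a cancellation.

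For the scalar curvature, I would first invoke the standard warped product formula with a $1$-dimensional fiber (for which $R_F=0$ and no $|\nabla\varphi|^2$ term appears):
\[
R_{\G}=R_{g_2}-2\,\frac{\Delta_{g_2}\varphi}{\varphi}.
\]
Next, I would handle the two pieces via conformal change in dimension two. The rotationally symmetric metric $\tilde g$ has Gauss curvature $\tilde K=-f''/f$, so the conformal rule $K_{g_2}=e^{2u}(\tilde K+\Delta_{\tilde g} u)$ yields
\[
R_{g_2}=2e^{2u}\Bigl(-\tfrac{f''}{f}+u''+\tfrac{f'}{f}u'\Bigr),
\]
after computing $\Delta_{\tilde g}u=u''+(f'/f)u'$ from the polar form of $\tilde g$. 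Likewise, the $2$-dimensional conformal rule $\Delta_{g_2}=e^{2u}\Delta_{\tilde g}$ applied to $\varphi=e^u$ gives
\[
\frac{\Delta_{g_2}\varphi}{\varphi}=e^{2u}\Bigl((u')^2+u''+\tfrac{f'}{f}u'\Bigr).
\]
Subtracting, the $u''$ and $(f'/f)u'$ terms cancel and one is left with $R_{\G}=2e^{2u}(-f''/f-(u')^2)$, as claimed. This cancellation is the whole point of the chosen conformal form $e^{-2u}$ in the base, and identifying it cleanly is the only subtle step; otherwise the computation is linear bookkeeping.

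For the mean curvature of a level set $\{r=r_0\}$, I would use the divergence formula $H=\operatorname{div}_{\G}(\nu)$ for the outward unit normal. Since $|\partial_r|^2_{\G}=e^{-2u}$, the unit normal is $\nu=e^{u}\partial_r$, and from $\sqrt{\det \G}=e^{-u}f$ the standard coordinate divergence expression gives
\[
H=\frac{1}{e^{-u}f}\,\partial_r\bigl(e^{-u}f\cdot e^{u}\bigr)=e^{u}\,\frac{f'}{f},
\]
as asserted. I expect no substantive obstacle here: the problem reduces to recognizing the right conformal decomposition and applying two well-known $2$-dimensional transformation laws, after which the scalar curvature formula falls out from a miraculous-looking but structurally dictated cancellation.
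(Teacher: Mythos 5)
Your proof is correct and follows essentially the same route as the paper: the scalar curvature is obtained from the warped product formula $R_{\G}=2(K_{g_2}-\Delta_{g_2}\varphi/\varphi)$ combined with the two-dimensional conformal transformation laws for Gauss curvature and the Laplacian applied to $g_2=e^{-2u}\tilde g$, after which the $u''$ and $(f'/f)u'$ terms cancel exactly as in the paper. The mean curvature computation via $\operatorname{div}_{\G}(\nu)$ with $\nu=e^{u}\partial_r$ is a minor presentational variant of the paper's direct trace of $\partial_r$ of the induced metric, and both yield $e^{u}f'/f$.
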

\begin{proof}
Denote $g=e^{-2u}(dr^2+f^2d\theta^2)$ and $\tilde g=dr^2+f^2d\theta^2$, so $g=e^{-2u}\tilde g$. It is known \cite[Proposition 1.13]{geometric} that the scalar curvature of metrics of the form $g+\varphi^2\,dt^2$ (where $\varphi$ is independent of $t$) is $2(K_g-\frac{\Delta_g\varphi}{\varphi})$. Taking $\varphi=e^u$, we have
\begin{align}
    \frac12R_{\G} &= K_g-\frac{\Delta_g e^u}{e^u} \\
    &= e^{2u}(K_{\tilde g}+\Delta_{\tilde g}u)-\frac{e^{2u}\cdot e^u(\Delta_{\tilde g}u+|\D_{\tilde g}u|_{\tilde g}^2)}{e^u} \\
    &= e^{2u}\left(K_{\tilde g}-|\D_{\tilde g}u|_{\tilde g}^2\right) \\
    &= e^{2u}\left(-\frac{f''}f-(u')^2\right).
\end{align}
Use $\Sigma_r$ to denote the surfaces of constant $r>0$ coordinate. Note that the unit normal vector of $\Sigma_r$ in the positive $r$ direction is $e^u\p_r$. The mean curvature of $\Sigma_r$ is computed as
\begin{equation*}
    H=\frac12 e^u\tr_{\Sigma_r}\left[\partial_r(e^{-2u}f^2)d\theta^2+\partial_r e^{2u}dt^2\right]=
    e^u\left(\big(\frac{f'}f-u'\big)+u'\right)=e^u\frac{f'}f. \qedhere
\end{equation*}
\end{proof}


\section{Construction by smoothing a cone}\label{sec:epsilonpipe}

This section provides the construction used to prove Theorems \ref{t:T3} and \ref{t:S2S1}. This geometry and its properties are found in Theorem \ref{t:T3tube} below. Given a number $k\in\RR$, we write $S^2_k$ for the simply-connected space form of constant Gauss curvature $k$. We use $D^2$ to denote a 2-dimensional open disk.
\begin{theorem}\label{t:T3tube}
    Given $k\in\mathbb{R}$ and any small numbers $\varepsilon,\delta,r_0>0$, there exists a smooth doubly warped product metric
	\begin{equation}\label{eq-pipe:general_form}
		\G=e^{-2u(r)}\big(dr^2+f(r)^2d\th^2\big)+e^{2u(r)}dt^2
	\end {equation}
    on the solid cylinder $D^2\times\RR$ in cylindrical coordinates $\{r,\theta,t\}$, which satisfies the following properties:
    \begin{enumerate}[(I)]
        \item the scalar curvature of $\G$ is at least $2(k-\varepsilon)$,
        \item a neighborhood of $\p D^2\times\RR$ is isometric to the annular region $\big\{\frac32r_1\leq d_{S^2_k}(-,x_0)\leq 2r_1\big\}\times\RR\subset S^2_k\times\RR$ for some $r_1>0$ satisfying $r_1\leq r_0$ (where $x_0\in S^2_k$ is any basepoint),
	\item $u=0$ in a neighborhood of $\p D^2\times\RR$,
        \item $e^{u}\leq\delta$ in a neighborhood of $r=0$,
        \item the surfaces $\Sigma_t=\{r=t\}\times\RR$ are strictly mean convex,
	\item $d_{\G}(\{0\}\times \RR,\partial D^2\times \mathbb{R})<r_0$.
        \item $\vol(D^2\times[0,1],\G)<100V_0$, where $V_0$ is the volume of $\{0\leq d(-,x_0)\leq 2r_1\}\times[0,1]\subset S^2_k\times[0,1]$.
    \end{enumerate}
\end{theorem}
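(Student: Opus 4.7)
The plan is to build $\G$ by prescribing the warping profiles $f(r)$ and $u(r)$ piecewise on nested radial zones, matching the pieces in $C^{1,1}$, and then mollifying the seams. Following the geometric description in the introduction, fix small parameters $\epsilon, c_1, c_2, \eta > 0$ with $c_1 \geq c_2^2$ (so that \eqref{eq-intro:scalar_expression} is positive on the inner formula) and nested radii $0 < \rho_1 < \rho_2 < \rho_3 < r_*$ with $r_* = 2r_1$, all to be chosen in order.

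The construction has four zones. On the outer zone $[\rho_3, r_*]$, take $u \equiv 0$ and $f(r) = s_k(r)$, the space-form sine of curvature $k$, so the metric is exactly $S^2_k \times \mathbb{R}$ in geodesic polar coordinates centered at $x_0$; choosing $\rho_3 \leq 3r_1/2$ delivers (II) and (III). On the intermediate cone zone $[\rho_2, \rho_3]$, keep $u \equiv 0$ and let $f$ be the unique solution of $-f''/f \equiv k - \epsilon$ matched to the outer profile at $r = \rho_3$; by Proposition \ref{prop:curvatures}, $R_\G \equiv 2(k - \epsilon)$ here, and this profile approximates a slightly-acute cone of angle $2\pi(1 - \eta)$ for appropriate $\eta = \eta(\epsilon)$. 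On the innermost zone $[0, \rho_1]$, use the explicit formulas of \eqref{eq-intro:metric}: $f(r) = r(1 - c_1/\log(1/r))$ and $u(r) = -c_2 \log \log(1/r)$; then \eqref{eq-intro:scalar_expression} gives $R_\G > 0 \geq 2(k - \epsilon)$. Finally, on the thin transition zone $[\rho_1, \rho_2]$, interpolate $u$ between $-c_2 \log \log(1/\rho_1)$ and $0$, and interpolate $f$ between the inner and cone profiles, both $C^{1,1}$.

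The main obstacle is balancing all these parameters so the scalar bound (I) survives the transition zone and the eventual mollification. By \eqref{e:scalg}, $R_\G = 2 e^{2u}(-f''/f - (u')^2)$, so the transition incurs a $(u')^2$ penalty of order $(c_2 \log \log (1/\rho_1)/(\rho_2 - \rho_1))^2$, which must be absorbed by choosing an interpolation for $f$ whose $-f''/f$ is slightly larger than $k - \epsilon$. I would fix parameters in the order $\epsilon \to (c_1, c_2) \to \eta \to \rho_3 \to \rho_2 \to \rho_1$, each sufficiently small to buy slack for the next, exploiting the fact that the inner formula carries Gauss curvature of order $1/(r^2 \log^{2+2c_2}(1/r))$ as a buffer near the seam at $\rho_1$.

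Once the $C^{1,1}$ profile is in place, the remaining items follow directly. (IV) holds since $e^{u(\rho_1)} = (\log(1/\rho_1))^{-c_2} \leq \delta$ for $\rho_1$ small. (V) reduces to $f' > 0$ on each zone: automatic on the outer and cone pieces (where $f'$ equals the appropriate space-form cosine), and on the inner piece since $f'(r) = 1 - c_1/\log(1/r) - c_1/\log^2(1/r) > 0$ for $r$ small; on the transition zone we take a monotone interpolant. (VI) follows from $d_\G(\{0\} \times \mathbb{R}, \partial D^2 \times \mathbb{R}) \leq \int_0^{r_*} e^{-u(r)} \, dr < r_0$ because $e^{-u} = (\log(1/r))^{c_2}$ is integrable near zero and $r_*$ can be made small. (VII) follows from the analogous volume estimate for $2\pi \int_0^{r_*} e^{-u} f \, dr$. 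Finally, the three $C^{1,1}$ seams are mollified on scales much shorter than the prevailing curvature surplus; since this only averages the bounded quantities $f''$ and $u'$ across short intervals, the scalar lower bound, mean convexity, distance, and volume estimates all persist.
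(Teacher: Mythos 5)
Your outline follows the same overall strategy as the paper's Proof A (smoothing an acute cone and gluing four radial zones), but it has a genuine gap at the origin. On the innermost zone $[0,\rho_1]$ you adopt the explicit profile $u(r)=-c_2\log\log(1/r)$ and $f(r)=r(1-c_1/\log(1/r))$ all the way down to $r=0$. This does not define a smooth (or even continuous) Riemannian metric on the solid disk: as $r\to0$ you have $u\to-\infty$, so $e^{-2u}=(\log\tfrac1r)^{2c_2}\to\infty$ and the horizontal part of the metric blows up, while $e^{2u}\to0$ collapses the $dt^2$ direction. Equivalently, $u'(r)=c_2/(r\log(1/r))$ is unbounded, and even after reparameterizing by arclength the angular coefficient has unbounded second derivative at the center. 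This is a genuine metric degeneration, not a $C^{1,1}$ seam, so your closing mollification step does nothing to repair it, and the resulting object is not a metric on $D^2\times\RR$.

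The paper's construction resolves exactly this: the core profile is used only on an annulus $[r_5,r_4]$ with $r_5>0$, and a fourth piece is glued in on $[0,r_6]$, namely a constant-curvature disk $e^{-2u_3(r_5)}(dr^2+\sn_A^2(r)d\theta^2)+e^{2u_3(r_5)}dt^2$ with $u$ constant, which caps off the cylinder smoothly. One must also modify $u$ near $r_5$ to make it constant there so the gluing is smooth (this is Lemma~\ref{lemma-pipe:smoothing_u}), and arrange the cap curvature $A$ to be large enough that the scalar curvature bound survives (the paper shows $A\gtrsim c_1/(r_5^2\log(1/r_5))$, whence \eqref{eq-pipe:scalar4}). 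Your proposal is missing this cap entirely. Once you add it, the rest of your plan — the $(u')^2$ penalty from the transition zone being absorbed by the surplus $-f''/f\sim c_1/(r^2\log^3(1/r))$, choosing parameters in the order $\epsilon\to(c_1,c_2)\to\rho_3\to\rho_2\to\rho_1$, and then mollifying the finitely many $C^{1,1}$ seams — is essentially the paper's argument and can be made to work with sufficient care, but it is incomplete as written.
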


\begin{remark}\label{rem:epsilonpipe}
    Notice that the quantity $V_0$ in property (VII) is $2\pi(1-\cn_k(2r_1))/k$ when $k\neq0$ and $4\pi r_1^2$ when $k=0$; see below \eqref{eq-pipe:def_sn_k} for the definition of $\cn_k$. In either case, $V_0=O(r_1^2)$ for a fixed $k$.
\end{remark}

As mentioned above Remark \ref{rem:ricci}, two independent proofs of Theorem \ref{t:T3tube} are given. We call them Proofs A and B. Proof A comprises the remainder of the section.

\begin{proof}[Proof A outline:]
Before the full proof begins in the next subsection, we present some preliminary notations and an overview of the proof strategy. 

The metric $\G$ is obtained by gluing four pieces consecutively. The metric on each piece takes the form
\begin{equation}
    \G_i=e^{-2u_i(r)}(dr^2+f_i(r)^2d\th^2)+e^{2u_i(r)}dt^2,\quad r\in[a_i,b_i],\quad  t\in\RR,
\end{equation}
and they are glued together by identifying $\G_i|_{\{r=a_i\}}$ with $\G_{i+1}|_{\{r=b_{i+1}\}}$, for $i=1,2,3$. The four pieces have the following roles.
\begin{enumerate}
\item The outermost piece $\G_1$ is an annular region centered along a line $\{*\}\times\mathbb{R}$ in the manifold $S^2_k\times\RR$, where $S^2_k$ is the simply connected Riemannian surface of constant curvature $k$. This piece ensures condition $(II)$ of Theorem \ref{t:T3tube}.

\item The second piece $\G_2$, glued to the inner boundary of $\G_1$, is an annular region in the manifold $S^2_{\alpha,k-\epsilon}\times\RR$ where $S^2_{\alpha,k-\epsilon}$ is the simply connected Riemannian surface of constant curvature $k-\epsilon$ and one conic singularity of angle $2\pi\alpha$ for $0<1-\alpha\ll1$. The nontrivial cone angle $\alpha<1$ is crucial for the construction since it creates positive curvature at the tip. On the other hand, to avoid creating large negative curvature while gluing with the first piece, we need to slightly decrease its curvature to $k-\epsilon$.

\item The third piece $\G_3$ is the core component where the large decrease in the $dt$ line element occurs. Here $f_3$ is the prototype function $r(1-\frac{c_1}{\log(1/r)})$ and $u_3$ is a modification of the function $-c_2\log\log(1/r)$. These prototype functions are equivalent to the ones introduced in \eqref{eq-intro:f_and_u}, by the identification $u_3=\log\phi$. In the construction, the modification for $u_3$ is done to appropriately glue with the nearby pieces.

\item The metric $\G_3$ is singular at $r=0$. To remove this singularity, we cut off $\G_3$ at a certain small radius and glue in an additional piece $\G_4$. This metric takes the form of the product of $\RR$ with a disk with sufficiently large constant curvature.
\end{enumerate}
\noindent See Figure \ref{fig:graph1} for a schematic depiction of this construction.

Roughly speaking, the functions $f_i$ and $u_i$ are concatenated to form the functions $f$ and $u$ in Theorem \ref{t:T3tube}. The function $u$ is constructed to be smooth. The function $f$ is constructed to be piecewise smooth, but merely $C^{1,1}$ where one transitions from one piece to the next. A final step is needed to mollify $f$. This last step and the modification mentioned in piece 3 are accomplished by the use of two technical statements, Lemmas \ref{lemma-pipe:smoothing_u} and \ref{lemma-pipe:smoothing_f}.

To explain the use of these lemmas, recall the following well-known principle for gluing two manifolds along a common boundary: If two Riemannian manifolds $(M_1,h_1),(M_2,h_2)$ have isometric boundary and their outward boundary mean curvatures satisfy $H_{M_1}+H_{M_2}\geq0$, then one can mollify the metric $h_1\sqcup h_2$ on the union to produce a smooth metric with scalar curvature at least $\min\{R_{h_1},R_{h_2}\}-\varepsilon$. See for instance \cite[Theorem 5]{BMN}. The smoothing lemmas used in the present construction play a role similar to the above gluing statement, but have the advantage of preserving the particular warped product form \eqref{eq-pipe:general_form}.

\begin{figure}[h]
    \begin{center}
        \includegraphics[scale=0.5]{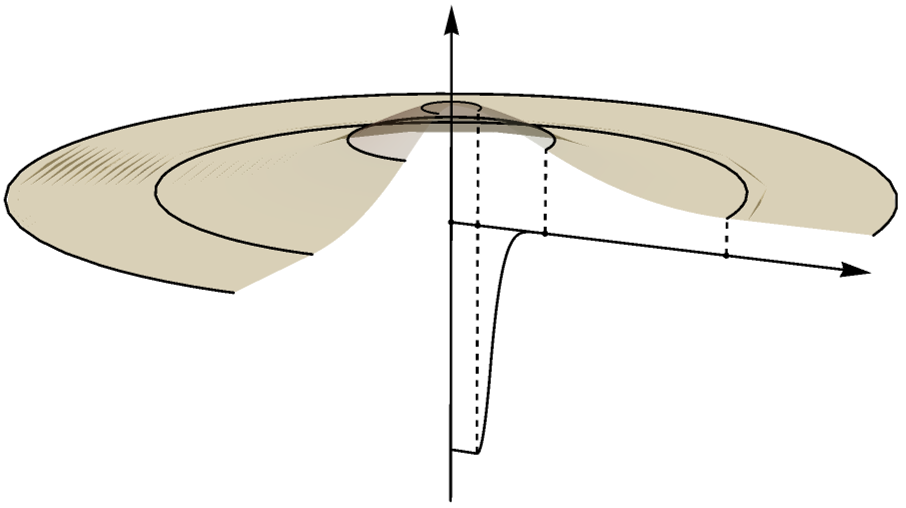}
    \end{center}
    \begin{picture}(0,0)(221.5,1)
        \put(316,87){$\G_1$}
        \put(270,92){$\G_2$}
        \put(235,109){$\G_3$}
        \put(222,98){$\G_4$}
        \put(218,175){{\large{$z$}}}
        \put(235,40){$z=u(r)$}
        \put(350,84){{\large{$r$}}}
    \end{picture}
    \captionsetup{margin=.8cm}
    \caption{The $2$-dimensional metric $e^{-2u}(dr^2+f^2d\theta^2)$ of Theorem \ref{t:T3tube} is realized isometrically as the graph of a radial function over $\mathbb{R}^2$, displayed in grey. Below it, the graph of $u$ is given. The vertical dashed lines indicate the junctions between neighboring pieces of the construction.}\label{fig:graph1}
\end{figure}

More notation is needed. For $k\in\RR$, we define the functions
\begin{equation}\label{eq-pipe:def_sn_k}
    \sn_k(r)=\begin{cases}
    \frac1{\sqrt k}\sin\big(\sqrt kr\big)&\text{ for }k>0, \\
    r&\text{ for }k=0, \\
    \frac1{\sqrt{-k}}\sinh\big(\sqrt{-k}r\big)&\text{ for }k<0,
    \end{cases}
\end{equation}
so the metric of constant curvature $k$ is expressed as $dr^2+\sn_k(r)^2d\theta^2$ in polar coordinates. We also denote $\cn_k(r)=\sn'_k(r)$, $\tn_k(r)=\sn_k(r)/\cn_k(r)$, and $\arctn_k(r)$ as the inverse function of $\tn_k(r)$. Thus $k\sn_k^2+\cn_k^2=1$ and $\cn'_k=-k\sn_k$.

The proof of Theorem \ref{t:T3tube} is broken into two parts and contained in the two subsections below. The first part contains the detailed construction of the metrics $\G_1, \dots, \G_4$ described above, as well as some of their relevant properties. In the second part, we show that the resulting metric can be smoothed and that it satisfies conditions (I)\,--\,(VII).

\subsection{Proof A step 1: construction of the four pieces}\label{subsec:part1}
\vspace{6pt}
We now begin the first formal proof of Theorem \ref{t:T3tube}. Since decreasing $\epsilon$ and $r_0$ only strengthens properties (I)\,--\,(VII), we may assume that
\begin{equation}\label{eq-pipe:assumptions}
\epsilon\leq 10^{-3}\min\{2,1+|k|\},\quad r_0\leq 10^{-3}\min\left\{1,|k|^{-1}\right\}.
\end{equation}
Having assumed \eqref{eq-pipe:assumptions}, we fix the parameters $\epsilon,r_0$. \\


\noindent\textbf{Piece (i)} is a cylindrical region in the model space $S^2_k\times\RR$:
\begin{equation}\label{eq-pipe:def_g1}
    \G_1=dr^2+\sn_k^2(r)d\th^2+dt^2,\quad r_1\leq r\leq 2r_1,\ t\in\RR.
\end{equation}
The warping factors for this piece are thus $f_1(r)=\sn_k(r)$, $u_1\equiv0$. 

The radius $r_1$ must be chosen to satisfy various smallness conditions. To clarify this choice and its dependence on other parameters, we present following set of conditions which will be used throughout the construction below. \\

\noindent
{\bf{Choice of $r_1$:}}{\emph{
There exists a sufficiently small $r_1\leq r_0$, depending only on $k,\epsilon$, such that
\begin{equation}\label{eq-pipe:r1_cond_1}
    \frac12r_1\leq\arctn_{k-\epsilon}(\tn_k(r_1))\leq 2r_1,\ \ 
        r_1\left(7\log\frac1{r_1}+4\right)<r_0,
\end{equation}
and
\begin{equation}\label{eq-pipe:r1_cond_2}
    \frac12r\leq\sn_{k'}(r)\leq2r\ \text{ for all }\ k-1\leq k'\leq k+1\text{ and }\ 0<r\leq r_1,
\end{equation}
and the following inequalities hold for all $0<r\leq r_1$:
\begin{equation}\label{eq-pipe:r1_cond_3}
    0\leq(k-\epsilon)r^2+\frac2{\log\frac{1}{r}}+\frac1{(\log \frac{1}{r})^2}\leq1,\ \ 
    0\leq(k-\epsilon)r^2+\frac1{\log \frac{1}{r}}\leq1,
\end{equation}
\begin{equation}\label{eq-pipe:r1_cond_4}
    \sqrt{r}\log\frac1r<10^{-3},\ \ 
    \frac1{8\sqrt{r}(\log \frac{1}{r})^5}\geq 100\max\{|k|,1\}.
\end{equation}
}}

The existence of such $r_1$ follows from the asymptotics of the related functions as $r\to0$. Specifically, we have $\arctn_{k-\epsilon}(\tn_k(r))=r+O(r^2)$, $r(7\log 1/r+4)=o(1)$, $\sn_{k'}(r)=r+O(r^2)$, and the expressions in \eqref{eq-pipe:r1_cond_3} have the order $\frac{1+o(1)}{\log(1/r)}$. Finally, we may arrange for \eqref{eq-pipe:r1_cond_4} due to the fact that $\lim_{r\to0} r^a\left(\log\frac{1}{r}\right)^b=0$ for any $a,b>0$.

With our choice of $r_1$ fixed, we may proceed with the remaining pieces. \\

\noindent\textbf{Piece (ii)} is constructed as
\begin{equation}\label{eq-pipe:def_g2}
    \G_2=dr^2+\alpha^2\sn_{k-\epsilon}^2(r)d\th^2+dt^2,\quad r_3\leq r\leq r_2,\ t\in\RR.
\end{equation}
Evidently $\G_2$ has constant scalar curvature 
\begin{equation}\label{eq-pipe:scalar2}
    R_{\G_2}=2(k-\epsilon)
\end{equation}
and the warping factors are $f_2(r)=\alpha\sn_{k-\epsilon}(r)$, $u_2\equiv0$. The parameters $\alpha,r_2$ are determined by the $C^{1,1}$ matching\footnote{The system \eqref{eq-pipe:matching_12} is only a $C^1$ matching condition. It automatically follows that the concatenation of $f_1$ and $f_2$ is $C^{1,1}$ across the junction, since the continuous gluing of two Lipschitz functions of one variable is again Lipschitz. This logic also holds for the matching conditions \eqref{eq-pipe:matching_23} and \eqref{eq-pipe:matching_34} below.} of $f$ at the junction with $\G_1$:
\begin{equation}\label{eq-pipe:matching_12}
    \left\{\begin{aligned}
    & f_2(r_2)=f_1(r_1) \\
    & f'_2(r_2)=f'_1(r_1)
    \end{aligned}\right. \ \Leftrightarrow\ 
    \left\{\begin{aligned}
	& \alpha\sn_{k-\epsilon}(r_2)=\sn_k(r_1) \\
	& \alpha\cn_{k-\epsilon}(r_2)=\cn_k(r_1)
    \end{aligned}\right. \ \Rightarrow\ 
    \left\{\begin{aligned}
	& r_2=\arctn_{k-\epsilon}(\tn_k(r_1)) \\
	& \alpha=\sqrt{1-\epsilon\sn_k^2(r_1)}.
    \end{aligned}\right.
\end{equation}
The inner radius $r_3$ is determined later in Lemma \ref{lemma-pipe:c1_estimate}, where it is also made clear that $r_3<r_2$. By \eqref{eq-pipe:r1_cond_1} we have $\frac12r_1\leq r_2\leq2r_1$. By \eqref{eq-pipe:r1_cond_2} and the smallness of $\epsilon$ and $r_0$, we have $\alpha>1/2$. \\

\noindent\textbf{Piece (iii)} is the core piece in the construction. The choice of $f$ and $u$ in this piece is essentially \eqref{eq-intro:f_and_u} as mentioned in the introduction, where the function $\phi$ there is related to $u$ by $\phi=e^u$. Since piece (iii) must be glued to piece (ii) and piece (iv), the function $u$ needs to be modified near the two endpoints of the interval. The additional technical work arising in this process will occupy a large part of the construction.

Set $r_4=\varepsilon r_1^2$. Given positive constants $r_5<r_4, c_1,c_2$ to be determined below, we set
\begin{equation}\label{eq-pipe:choice_of_f_u}
    f_3(r)=r\left(1-\frac{c_1}{\log\frac{1}{r}}\right),\quad
    v_3(r)=-c_2 h(\frac r{r_4})\log\log\frac1r,\quad 
    r_5\leq r\leq r_4,
\end{equation}
where $h(x)$ is a fixed smooth cutoff function satisfying $h|_{[0,1/2]}\equiv1$, $h|_{[3/4,1]}\equiv0$ and $|h'|\leq8$. The introduction of the function $h$ is to smoothly concatenate with $u_2$ at the junction with $\G_2$.

As a preliminary step towards our goal of defining $\G_3$, consider the metric 
\begin{equation}
    \G'_3=e^{-2v_3(r)}(dr^2+f_3(r)^2d\th^2)+e^{2v_3(r)}dt^2.
\end{equation} 
To ultimately obtain the metric $\G_3$ in piece (iii), we will further modify $v_3$ near $r_5$ to facilitate gluing with the last piece. Before we explicitly describe this modification, we will make appropriate choices of parameters so that two properties are ensured: first the $C^{1,1}$ matching of $f_3$ at the junction with piece (ii), and then a large lower bound for the scalar curvature of $\G'_3$. For convenience we denote
\begin{equation}
    w(r)=\log\frac1r,\quad w_4=\log\frac1{r_4}
\end{equation}
and note the following useful computations:
\begin{equation}\label{eq-pipe:derivatives}
    f'_3=1-\frac{c_1}w-\frac{c_1}{w^2},\quad
    f''_3=-\frac{c_1(w+2)}{rw^3},\quad
    v'_3=\frac{c_2h(r/r_4)}{rw}-\frac{c_2}{r_4}h'(r/r_4)\log w.
\end{equation}

\vspace{6pt}

\noindent\textbf{The $C^{1,1}$ matching of $f$ and choice of $c_1,r_3$}. We choose the constants $c_1,r_3$ to satisfy the $C^{1,1}$ matching condition of $f$ at the junction with $\G_2$:
\begin{equation}\label{eq-pipe:matching_23}
    \begin{cases}
         f_3(r_4)=f_2(r_3) \\
         f'_3(r_4)=f'_2(r_3)
    \end{cases}
    \ \Leftrightarrow\quad
    \begin{cases}
        r_4(1-\frac{c_1}{w_4})=\alpha\sn_{k-\epsilon}(r_3) \\
	1-\frac{c_1}{w_4}-\frac{c_1}{w_4^2}=\alpha\cn_{k-\epsilon}(r_3).
    \end{cases}
\end{equation}

\begin{lemma}\label{lemma-pipe:c1_estimate}
    The system \eqref{eq-pipe:matching_23} has a solution with $\frac1{32}w_4\epsilon r_1^2\leq c_1\leq\sqrt{8r_4}w_4$ and $0<r_3\leq4\epsilon r_1^2$. In particular, $r_3<r_2$.
\end{lemma}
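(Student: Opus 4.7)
The plan is to reduce the two-equation system \eqref{eq-pipe:matching_23} to a single scalar equation in $c_1$ via the Pythagorean-type identity for space-form trig functions, and then solve it with the intermediate value theorem. Squaring the first equation and multiplying by $(k-\epsilon)$, squaring the second, and summing, the identity $(k-\epsilon)\sn_{k-\epsilon}^2(r) + \cn_{k-\epsilon}^2(r) \equiv 1$ eliminates $r_3$ and yields
\begin{equation*}
    P(c_1) \;:=\; (k-\epsilon)\,r_4^2\Big(1-\tfrac{c_1}{w_4}\Big)^2 + \Big(1-\tfrac{c_1}{w_4}-\tfrac{c_1}{w_4^2}\Big)^2 \;=\; \alpha^2.
\end{equation*}
Conversely, given any solution $c_1$ to this scalar equation with $0<c_1\ll w_4$ so that both factors above are positive, one recovers $r_3$ by setting $r_3 := \sn_{k-\epsilon}^{-1}\!\big(r_4(1-c_1/w_4)/\alpha\big) > 0$; since $\cn_{k-\epsilon}(r_3)>0$ for $r_3$ small, the second equation of \eqref{eq-pipe:matching_23} is automatically verified with the correct sign.

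The main step is to show that $P$ changes sign between $c_1 = \tfrac{1}{32}w_4\epsilon r_1^2$ and $c_1 = \sqrt{8 r_4}\,w_4$. Writing $y:=c_1/w_4$ and using $1-\alpha^2 = \epsilon\sn_k^2(r_1)$, one expands
\begin{equation*}
    P(c_1)-\alpha^2 \;=\; \epsilon\sn_k^2(r_1) \;-\; 2y\Big(1+\tfrac{1}{w_4}\Big) \;+\; y^2\Big(1+\tfrac{1}{w_4}\Big)^2 \;+\; (k-\epsilon)\,r_4^2(1-y)^2.
\end{equation*}
At the left endpoint $y=\epsilon r_1^2/32$, the bound $\sn_k^2(r_1)\geq r_1^2/4$ from \eqref{eq-pipe:r1_cond_2} gives $\epsilon\sn_k^2(r_1)\geq\epsilon r_1^2/4$, which dominates $2y(1+1/w_4)\leq\epsilon r_1^2/8$; the remaining terms are $O(\epsilon^2 r_1^4)$, so $P>\alpha^2$. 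At the right endpoint $y=\sqrt{8\epsilon}\,r_1$, the term $-2y(1+1/w_4)\leq -2\sqrt{8\epsilon}\,r_1$ dominates the positive contributions, which are $O(\epsilon r_1^2)$, because $\sqrt{\epsilon}\,r_1$ dwarfs $\epsilon r_1^2$ under the smallness assumption \eqref{eq-pipe:assumptions}; hence $P<\alpha^2$. Continuity of $P$ and the intermediate value theorem then produce the desired $c_1 \in \big[\tfrac{1}{32}w_4\epsilon r_1^2,\,\sqrt{8 r_4}\,w_4\big]$.

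The bound on $r_3$ follows directly: from the first equation of \eqref{eq-pipe:matching_23}, $\sn_{k-\epsilon}(r_3) = r_4(1-c_1/w_4)/\alpha \leq r_4/\alpha \leq 2r_4 = 2\epsilon r_1^2$, using $\alpha\geq 1/2$ as observed after \eqref{eq-pipe:matching_12}; then \eqref{eq-pipe:r1_cond_2} yields $r_3 \leq 2\sn_{k-\epsilon}(r_3) \leq 4\epsilon r_1^2 < r_1 \leq r_2$. The main obstacle in this argument is the elementary but delicate bookkeeping of inequalities at the two endpoints; the hierarchy of smallness encoded in \eqref{eq-pipe:assumptions} and \eqref{eq-pipe:r1_cond_1}--\eqref{eq-pipe:r1_cond_4} is precisely what allows the dominant terms to outpace the secondary ones with room to spare.
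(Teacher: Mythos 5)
Your proof is correct, and the reduction step is identical to the paper's: squaring the two matching equations, weighting by $(k-\epsilon)$, and summing via the space-form Pythagorean identity to eliminate $r_3$ produces exactly the quadratic $ac_1^2-2bc_1+d=0$ of the paper's proof (your $P(c_1)-\alpha^2$ is precisely $ac_1^2-2bc_1+d$). The difference lies in the second half. The paper solves the quadratic explicitly, shows the discriminant $b^2-ad$ is positive, isolates the smaller root $c_1=\tfrac1a(b-\sqrt{b^2-ad})$, and bounds it from below by $\tfrac{d}{2b}$ and from above by $\sqrt{d/a}$ via the elementary inequalities $\sqrt{x-y}\leq\sqrt x-\tfrac{y}{2\sqrt x}$ and $\sqrt{x-y}\geq\sqrt x-\sqrt y$, after estimating $a,b,d$ individually. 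You instead rewrite the quadratic in the variable $y=c_1/w_4$ and run the intermediate value theorem by checking the sign of $P-\alpha^2$ at the two target endpoints. Both routes are elementary and give the same conclusion; the quadratic-formula approach pins down which root is selected and spreads the estimates across the three coefficients, whereas the IVT approach only requires continuity and gives existence directly in the desired interval, at the cost of bookkeeping the sign check at both endpoints. Your endpoint estimates are sound: at the left end $\epsilon\sn_k^2(r_1)\geq r_4/4$ beats $2y(1+1/w_4)\leq r_4/8$ with the quadratic-in-$r_4$ remainder $|k-\epsilon|r_4^2\leq r_4/8$ absorbed by the smallness hierarchy \eqref{eq-pipe:assumptions}, and at the right end $-2\sqrt{8r_4}$ overwhelms the $O(r_4)$ terms. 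The reconstruction of $r_3$ and the sign verification for the second equation are also correct.

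One small slip: in the final chain you write $4\epsilon r_1^2<r_1\leq r_2$, but $r_2\geq r_1$ need not hold — the smallness condition \eqref{eq-pipe:r1_cond_1} only guarantees $\tfrac12 r_1\leq r_2\leq 2r_1$, which is exactly what the paper invokes. The conclusion $r_3<r_2$ is unaffected because $4\epsilon r_1^2\ll\tfrac12 r_1\leq r_2$ under \eqref{eq-pipe:assumptions}, but the intermediate inequality $r_1\leq r_2$ should be replaced by $\tfrac12 r_1\leq r_2$.
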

\begin{proof}[Proof of Lemma \ref{lemma-pipe:c1_estimate}]
To eliminate $r_3$ in the system, we square both equations and apply the Pythagorean theorem to their sum. This yields a quadratic equation $ac_1^2-2bc_1+d=0$ for the parameter $c_1$, where the constants $a,b,d$ are given by
\begin{equation}
    a=\frac1{w_4^2}+(k-\epsilon)\frac{r_4^2}{w_4^2}+\frac2{w_4^3}+\frac1{w_4^4},\ \ \ 
    b=\frac1{w_4}+\frac1{w_4^2}+(k-\epsilon)\frac{r_4^2}{w_4},\ \ \ 
    d=1-\alpha^2+(k-\epsilon)r_4^2.
\end{equation}

In order to verify the existence and claimed estimate of $c_1$, we proceed by estimating the coefficients of the quadratic equation above. Using the smallness condition \eqref{eq-pipe:r1_cond_3} of $r_1$, the following bounds hold 
\begin{equation}\label{eq-pipe:abest}
    \frac1{w_4^2}\leq a\leq\frac2{w_4^2},\quad \frac1{w_4}\leq b\leq\frac2{w_4}.
\end{equation} 
By the second equation of \eqref{eq-pipe:matching_12}, we have $d=\epsilon\sn_k^2(r_1)+(k-\epsilon)r_4^2$. Note that smallness condition \eqref{eq-pipe:r1_cond_2} implies $\frac14r_4\leq\epsilon\sn_k^2(r_1)\leq 4r_4$. Also, $|k-\epsilon|r_4^2\leq|k-\epsilon|r_0r_4\leq\frac18r_4$ by \eqref{eq-pipe:assumptions}. These three observations imply 
\begin{equation}\label{eq-pipe:dest}
    \frac18r_4\leq d\leq 8r_4.
\end{equation}

With the estimates of $a,b,d$ in hand, we are ready to find and estimate $c_1$. In what follows, we make frequent use of \eqref{eq-pipe:abest} and \eqref{eq-pipe:dest}. First note that $b^2-ad>0$, hence the above quadratic equation for $c_1$ is solvable. Using the general inequality $\sqrt{x-y}\leq\sqrt x-\frac{y}{2\sqrt x}$ for numbers $0<y<x$, we have
\begin{equation}\label{eq-pipe:c1_upper_bound}
    c_1 = \frac1a\left(b-\sqrt{b^2-ad}\right)
    \geq \frac d{2b}
    \geq \frac1{32}w_4r_4.
\end{equation}
To obtain the upper bound for $c_1$, we use the general inequality $\sqrt{x-y}\geq\sqrt x-\sqrt y$ for numbers $0<y<x$ to obtain
\begin{equation}
    c_1 \leq \sqrt{\frac da}
    \leq\sqrt{8r_4}w_4.
\end{equation}
The upper bound on $c_1$ implies $r_3>0$ from the first equation of \eqref{eq-pipe:matching_23}. Finally, we have 
\begin{equation}
    \epsilon r_1^2=r_4
    >r_4\big(1-\frac{c_1}{w_4}\big)
    =\alpha\sn_{k-\epsilon}(r_3)>\frac12\cdot\frac12r_3=\frac14r_3,
\end{equation} 
where we have used the fact that $\alpha>1/2$ and the smallness condition \eqref{eq-pipe:r1_cond_2}. The upper bound on $r_3$ follows. Since $r_2\geq\frac12 r_1$, it follows that $r_3<r_2$ and the lemma follows.
\end{proof}

\noindent\textbf{Estimating $R_{\G_3'}$ and choosing $c_2$}. Next, we make proper choice of $c_2$ to ensure that the scalar curvature of $\G_3'$ is sufficiently large. Using Proposition \ref{prop:curvatures} and \eqref{eq-pipe:derivatives} we obtain
\begin{align}
    R_{\G'_3} &= 2e^{2v_3}\left(-\frac{f''_3}{f_3}-(v'_3)^2\right) \\
    &= \frac2{w^{2c_2h(r/r_4)}}\left[\frac{c_1(w+2)}{w^2r^2(w-c_1)}-c_2^2\left(\frac{h(r/r_4)}{wr}-\frac1{r_4}h'(r/r_4)\log w\right)^2\right].\label{eq-pipe:scalarg3}
\end{align}
Since $|h'|\leq 8$ and $h'(r/r_4)=0$ when $r\leq r_4/2$, we have $h'(r/r_4)\log w(r)\leq8\log\log\tfrac{2}{\varepsilon r_1^2}$ for all $r$. Using this fact with the general inequality $(x-y)^2\leq 2x^2+2y^2$ on the square term in \eqref{eq-pipe:scalarg3}, we find
\begin{align}
    R_{\G'_3}&\geq \frac2{w^{2c_2h(r/r_4)}}\left[\frac{c_1(w+2)}{w^2r^2(w-c_1)}-2c_2^2 \frac{h(r/r_4)^2}{w^2 r^2}-2c_2^2\frac{1}{r_4^2}h'(r/r_4)^2(\log w)^2\right]\\
    &\geq \frac2{w^{2c_2h(r/r_4)}}\left[\frac{1}{w^2r^2}\left(\frac{c_1(w+2)}{(w-c_1)}-2c_2^2\right)-128c_2^2\frac{1}{r_4^2}\left(\log\log\frac 2{\epsilon r_1^2}\right)^2\right]\\
    &\geq \frac2{r^2w^{2+2c_2h(r/r_4)}}\left[\frac{c_1(c_1+2)}{w-c_1}+c_1-2c_2^2\right]-c_2^2\frac{256}{\epsilon^2r_1^4}\left(\log\log\frac 2{\epsilon r_1^2}\right)^2\label{eq-pipe:g3_scal}
\end{align}
where we have used $0\leq h\leq1$.

We first require $c_2\leq\sqrt{c_1/2}$, which is less than 1 by the first condition in \eqref{eq-pipe:r1_cond_4}. Then we can estimate the first term of \eqref{eq-pipe:g3_scal} for all $r\leq r_4$ using the bounds on $c_1$ from Lemma \ref{lemma-pipe:c1_estimate} with the fact that $0\leq h\leq1$ to find
\begin{align}
    \frac2{r^2w^{2+2c_2h(r/r_4)}}\left[\frac{c_1(c_1+2)}{w-c_1}+c_1-2c_2^2\right] &\geq
    \frac2{r^2w^{2+2c_2}}\frac{c_1(c_1+2)}{w-c_1} \label{eq-pipe:term1}\\
    &\geq \frac{4c_1}{r^2w^5}\\
    &\geq \frac18\frac{w_4\epsilon r_1^2}{r^2w^5}\\
    &\geq \frac{1}{8r^{1/2}w^5}\frac{\epsilon r_1^2}{r_4^{3/2}} \\
    &\geq \frac{1}{8r^{1/2}w^5}
\end{align}
where, in the fourth inequality we used $w_4\geq1$ with $r\leq r_4$, and in the last inequality we used $\frac{\epsilon r_1^2}{r_4^{3/2}}=\frac1{\epsilon^{1/2}r_1}\geq1$. It follows from the smallness condition \eqref{eq-pipe:r1_cond_4} that 
\begin{equation}\label{eq-pipe:scalg'part1}
    \frac2{r^2w^{2+2c_2h(r/r_4)}}\left[\frac{c_1(c_1+2)}{w-c_1}+c_1-2c_2^2\right]\geq 100\max\{|k|,1\}.
\end{equation}
In order to control the second term of the scalar curvature estimate \eqref{eq-pipe:g3_scal}, we fix a choice of $c_2\leq\min\{\sqrt{c_1/2},1/100\}$ so that
\begin{equation}\label{eq-pipe:scalg'part2}
    c_2^2\frac{256}{\epsilon^2r_1^4}\left(\log\log\frac 2{\epsilon r_1^2}\right)^2\leq\max\{{|k|},1\}.
\end{equation}
Combining \eqref{eq-pipe:scalg'part1} and \eqref{eq-pipe:scalg'part2} with \eqref{eq-pipe:g3_scal}, we conclude that 
\begin{equation}\label{eq-pipe:scalg'part3}
    R_{\G'_3}\geq99\max(|k|,1).
\end{equation}

\vspace{6pt}

\noindent\textbf{Choosing $r_5$ and modifying $v_3$ near $r_5$:} We choose the inner radius $r_5$ in \eqref{eq-pipe:choice_of_f_u} such that 
\begin{equation}\label{eq-pipe:r5_condition}
    r_5\leq\frac1{100}r_4,\quad
    v_3(2r_5)\leq\log\delta-1,\quad
    \frac{c_1}{r_5^2\left(\log\frac{1}{r_5}\right)^{1+2c_2}}\geq100\max\{|k|,1\},\quad
    \frac{f_3(r_5)}{f'_3(r_5)}\leq2r_5
\end{equation}
hold, where $\delta$ is the given parameter in condition (IV) and the last condition is achievable by the fact that $f_3(r)/f'_3(r)$ is asymptotic to $r$ at zero. These conditions will be used below to ensure desirable properties of piece (iv) and to estimate the distance in condition (VI). To finish the construction of piece (iii), we will modify $v_3$ near $r_5$ to facilitate gluing to piece (iv) defined below.





The modification is accomplished by the following lemma, whose proof is postponed to Subsection \ref{subsec:smoothing}. The main content of the lemma is that $v_3$ may be perturbed to a new function $u_3$ which is constant near $r_5$, without significantly damaging the scalar curvature lower bound. Recall that Proposition \ref{prop:curvatures} relates the left side of \eqref{eq-pipe:smoothing_u_aux1} below to the scalar curvature of our warped product metrics.

\begin{lemma}\label{lemma-pipe:smoothing_u}
    Suppose $f>0$ and $v$ are smooth functions on an interval $[s,t]$. Assume that
    \begin{equation}\label{eq-pipe:smoothing_u_aux1}
        e^{2v}\left(-\frac{f''}f-(v')^2\right)\geq\lambda
    \end{equation}
    holds on $[s,t]$ for some $\lambda\geq1$. For any $0<\mu\leq\min\big\{\frac14(t-s),1\big\}$ there is another smooth function $u$ on $[s,t]$ such that
    \begin{enumerate}
        \item $u=v$ on $[s+\mu,t]$ and $u$ is constant in $[s,s+\mu/2]$,
        \item $|u-v(s+\mu)|\leq\mu$ on $[s,s+\mu]$,
        \item $e^{2u}\left(-\frac{f''}f-(u')^2\right)\geq\lambda(1-\sqrt\mu)$ holds on $[s,t]$.
    \end{enumerate}
\end{lemma}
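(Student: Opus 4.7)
The idea is to modify $v$ by damping $v'$ to zero near $s$ via a smooth cutoff. I would fix a smooth nondecreasing cutoff $\chi:[s,t]\to[0,1]$ with $\chi\equiv 0$ on $[s,s+\mu/2]$ and $\chi\equiv 1$ on $[s+\mu,t]$, and localize the transition in a small sub-window $[s+\mu-\nu,s+\mu]\subseteq[s+\mu/2,s+\mu]$ for a parameter $\nu\in(0,\mu/2]$ to be chosen. Then define
\[
u(r):=v(s+\mu)-\int_r^{s+\mu}\chi(\rho)\,v'(\rho)\,d\rho,
\]
so that $u'=\chi v'$ and hence the pointwise bound $(u')^2=\chi^2(v')^2\leq(v')^2$ is built in. Since the integrand vanishes on $[s,s+\mu-\nu]$, the function $u$ is the constant $c^*:=v(s+\mu)-\int_{s+\mu-\nu}^{s+\mu}\chi v'$ there (in particular on $[s,s+\mu/2]$), while $u=v$ on $[s+\mu,t]$, which gives condition (1).

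For condition (2), I would estimate $|c^*-v(s+\mu)|\leq\nu\cdot\sup_{[s+\mu-\nu,s+\mu]}|v'|$ and choose $\nu$ (depending on $\mu$ and a priori bounds on $|v'|$ derived from the hypothesis via Sturm-type comparison for $f$) to make this at most $\mu$. The same integral bound extends $|u-v(s+\mu)|\leq\mu$ to all $r\in[s+\mu-\nu,s+\mu]$.

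For condition (3), the key calculation is
\[
e^{2u}\!\left(-\tfrac{f''}{f}-(u')^2\right)=e^{2u}\!\left(-\tfrac{f''}{f}-(v')^2\right)+e^{2u}(1-\chi^2)(v')^2\geq\lambda e^{2(u-v)}+e^{2u}(1-\chi^2)(v')^2,
\]
using the hypothesis for $v$. Whenever $u-v\geq\tfrac12\log(1-\sqrt\mu)$, the first term alone yields the required bound $\lambda(1-\sqrt\mu)$. The main obstacle is the regime where $u-v$ dips below this threshold: there the slack term $e^{2u}(1-\chi^2)(v')^2$ must cover the deficit $\lambda[(1-\sqrt\mu)-e^{2(u-v)}]$. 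The compensation comes from the observation that if $v-c^*$ is large at some point of $[s,s+\mu-\nu]$, then $v$ has varied non-trivially on $[s,s+\mu]$, which by the fundamental theorem forces $(v')^2$ to be large somewhere on this interval; an adaptive choice of $\chi$ and $\nu$, combined with the a priori bounds on $|v|$ and $|v'|$ implicit in the hypothesis, distributes the slack so that the pointwise inequality holds throughout $[s,s+\mu]$. The factor $\sqrt\mu$ (rather than $\mu$) in the conclusion is precisely what affords enough room to absorb the resulting error terms.
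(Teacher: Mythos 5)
Your construction $u' = \chi v'$ matches the paper's (which takes $\chi = \eta^q$ with $q$ large as the cutoff), and conditions (1)--(2) are verified in essentially the same way. Where you diverge is the verification of condition (3). The paper's proof arranges at the outset that $\sup_{[s,s+\mu]}|v - v(s+\mu)| \leq \frac14\sqrt\mu$; together with $|u - v(s+\mu)| \leq \frac14\mu$ from the sharp cutoff, this yields $|u - v| \leq \frac12\sqrt\mu$ on all of $[s,s+\mu]$, so $\lambda e^{-2|u-v|} \geq \lambda e^{-\sqrt\mu} \geq \lambda(1-\sqrt\mu)$ and the ``slack'' term $e^{2u}(1-\chi^2)(v')^2$ is never needed.

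You instead permit a bad regime where $u - v < \tfrac12\log(1-\sqrt\mu)$ and propose to cover the deficit with the slack term, but this does not work pointwise. The quantity $(u - v)(r) = \int_r^{s+\mu}(1-\chi)v'$ is an integral over $[r,s+\mu]$, whereas the slack at $r$ is proportional to $(v'(r))^2$ only. Consider $r\in(s,s+\mu)$ at an interior local maximum of $v$ with $v(r)\gg v(s+\mu)$: there $v'(r)=0$, so the slack vanishes at $r$, yet $u(r)$ is pinned near $v(s+\mu)$ by condition (2), so $u(r)-v(r)$ is very negative and the first term falls far short of $\lambda(1-\sqrt\mu)$. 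Your appeal to the variation of $v$ forcing $(v')^2$ to be large ``somewhere'' does not help, since the inequality must hold at $r$, and no ``adaptive'' choice of $\chi$ or $\nu$ changes the fact that both terms you have are local in $v'$ while the deficit is not. The missing ingredient is exactly the paper's preliminary step of controlling $\sup_{[s,s+\mu]}|v - v(s+\mu)|$ by $O(\sqrt\mu)$ (accomplished there by restricting $\mu$ using the finiteness of $\sup_{[s,t]}|v'|$), which makes the bad regime vacuous. Incidentally, the invocation of a ``Sturm-type comparison'' to bound $|v'|$ is unnecessary: $v$ is smooth on the compact interval $[s,t]$, so $\sup|v'|<\infty$ for free.
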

Apply the lemma with $f=f_3$, $v=v_3$, $s=r_5$, $t=r_4$, $\lambda=49\max\{|k|,1\}$, and $\mu=r_5$. Let $u_3$ be the function obtained, which we set as the warping factor for the third piece. We finally define
\begin{equation}\label{eq-pipe:def_g3}
    \G_3=e^{-2u_3(r)}(dr^2+f_3(r)^2d\th^2)+e^{2u_3(r)}dt^2,\quad r_5\leq r\leq r_4,
\end{equation}
as the metric for the third piece. By \eqref{eq-pipe:scalg'part3} and items 2, 3 of Lemma \ref{lemma-pipe:smoothing_u}, we have
\begin{equation}\label{eq-pipe:scalg3final}
    R_{\G_3}\geq 90\max(|k|,1),\qquad
    u_3(r_5)\leq\log\delta.
\end{equation}

\vspace{12pt}

\noindent\textbf{Piece (iv)} is defined as
\begin{equation}\label{eq-pipe:def_g4}
    \G_4=e^{-2u_3(r_5)}\big(dr^2+\sn_A^2(r)d\th^2\big)+e^{2u_3(r_5)}dt^2,\quad 0\leq r\leq r_6,\ A>0,
\end{equation}
and so $f_4(r)=\sn_A(r)$, $u_4(r)\equiv u_3(r_5)$. The parameters $A,r_6$ are determined by the $C^{1,1}$ matching of $f$:
\begin{equation}\label{eq-pipe:matching_34}
    \begin{cases}
	\sn_A(r_6)=f_3(r_5) \\
	\cn_A(r_6)=f'_3(r_5)
    \end{cases} \Rightarrow\ 
    \begin{cases}
	A=\frac{1-f'_3(r_5)^2}{f_3(r_5)^2} \\
	r_6=\arctn_A\left(\frac{f_3(r_5)}{f'_3(r_5)}\right),
    \end{cases}
\end{equation}
where we have used the Pythagorean Theorem.

We would like to estimate $A$ as expressed in \eqref{eq-pipe:matching_34}. First note that Lemma \ref{lemma-pipe:c1_estimate} and \eqref{eq-pipe:r1_cond_4} implies $c_1<\frac1{100}$. Combining this observation with the computation \eqref{eq-pipe:derivatives} and the fact that $f_3(r)\leq r$, we have
\begin{align}
    A & =f_3(r_5)^{-2}\left(-c_1^2\left[\frac1{w(r_5)^2}+\frac1{w(r_5)^4}+\frac2{w(r_5)^3}\right]+\frac{2c_1}{w(r_5)}+\frac{2c_1}{w(r_5)^2}\right)\\
    {}&\geq f_3(r_5)^{-2}\left(-\frac{c_1}{100}\cdot\frac{4}{w(r_5)^2}+\frac{2c_1}{w(r_5)}+\frac{2c_1}{w(r_5)^2}\right)\\
    &\geq f_3(r_5)^{-2}\frac{c_1}{w(r_5)} \\
    {}&\geq \frac{c_1}{r_5^2\log\frac{1}{r_5}}.\label{eq-pipe:lower_bound_A}
\end{align}
Therefore, the scalar curvature of $\G_4$ satisfies
\begin{align}
    R_{\G_4} &= 2e^{2u_3(r_5)}A\\
    &\geq 2e^{2v_3(2r_5)-2\mu}A \\
    &\geq 2e^{-2\mu}\cdot\left(\log\frac1{2r_5}\right)^{-2c_2}\cdot\frac{c_1}{r_5^2\log\frac{1}{r_5}} \\
    &\geq \frac{c_1}{r_5^2\left(\log \frac{1}{r_5}\right)^{1+2c_2}}\geq 100\max\{|k|,1\},\label{eq-pipe:scalar4}
\end{align}
where we have used Lemma \ref{lemma-pipe:smoothing_u} in the first inequality, the definition of $v_3$ in the second inequality, and the third item of \eqref{eq-pipe:r5_condition} in the final inequality.
Lastly, we will collect a necessary upper bound for $r_6$. Using the second equation of \eqref{eq-pipe:matching_34} with our choice \eqref{eq-pipe:r5_condition}, we have
\begin{equation}\label{eq-pipe:r6_r5}
    r_6 \leq \frac{f_3(r_5)}{f'_3(r_5)}
    \leq 2r_5.
\end{equation}

\subsection{Proof A step 2: verification of conditions and smoothing}\label{subsec:part2}
\vspace{6pt}

In Step 1 we constructed four pairs of functions $\{f_i,u_i\}_{i=1}^4$ in \eqref{eq-pipe:def_g1}, \eqref{eq-pipe:def_g2}, \eqref{eq-pipe:def_g3}, and \eqref{eq-pipe:def_g4}, which are defined on the intervals $[r_1,2r_1]$, $[r_3,r_2]$, $[r_5,r_4]$, and $[0,r_6]$, respectively. By translations in the $r$-coordinates, they are concatenated to form two functions $u$ and $\bar f$ defined on the interval $[0,\rho]$, where $\rho=r_1+r_2-r_3+r_4-r_5+r_6$. Due to the definition of $v_3$ and our use of Lemma \ref{lemma-pipe:smoothing_u}, the function $u$ is already smooth. According to \eqref{eq-pipe:matching_12}, \eqref{eq-pipe:matching_23}, and \eqref{eq-pipe:matching_34}, $\bar f$ is $C^{1,1}$ on $[0,\rho]$, yet may fail to be smooth. Consider the $C^{1,1}$ metric
\begin{equation}\label{eq-pipe:c11metric}
    \G'=e^{-2u}(dr^2+\bar f^2d\theta^2)+e^{2u}dt^2.
\end{equation}
To proceed, we will verify conditions (I)\,--\,(VII) for $\G'$, and afterward show this metric may be smoothed without substantive damage to the required conditions.

\vspace{6pt}

\noindent\textbf{Claim}. {\emph{The metric \eqref{eq-pipe:c11metric} defined in the above paragraph satisfies conditions (II)\,--\,(VII) of Theorem \ref{t:T3tube}, as well as the scalar curvature bound (I) wherever $\G'$ is smooth.}}

\begin{proof}[Proof of the claim]
The scalar curvature lower bound (I) was already established during each step of the above construction, namely in \eqref{eq-pipe:scalar2}, \eqref{eq-pipe:scalg'part3}, and \eqref{eq-pipe:scalar4}. Conditions (II) and (III) are evident from the form of piece (i), and condition (IV) was confirmed at the end of piece (iii)'s definition. By \eqref{e:meang}, the mean curvatures of the constant $r$ surfaces with respect to the positive $r$ direction are $e^{u(r)}\frac{\bar f'(r)}{\bar f(r)}$. Since $f'_i>0$ for all $i$ from their definitions, this verifies condition (V).
    
As for (VI), the $\G'$-distance from the central axis $\{r=0\}$ to $\p D^2\times\RR$ is equal to the sum of the integrals of $e^{-u_i}$. Since $u_i$ is constant on pieces (i), (ii), and (iv), three of these integrals are trivial. We break up the integral over piece (iii) into $[r_5,2r_5]$ and $[2r_5,r_4]$, and use the properties of $u_3$ from Lemma \ref{lemma-pipe:smoothing_u} with \eqref{eq-pipe:r6_r5} to find
\begin{align}
    d_{\G'}(\{r=0\}, \p D^2\times\RR) &= e^{-u_3(r_5)}r_6+\int_{r_5}^{r_4}e^{-u_3(r)}\,dr+(r_2-r_3)+r_1 \\
    &\leq \left(e^{-v_3(2r_5)+r_5}2r_5+\int_{r_5}^{2r_5}e^{-v_3(2r_5)+r_5}\,dr\right)+\int_{2r_5}^{r_4}e^{-v_3(r)}\,dr+3r_1\\
    &=3r_5e^{-v_3(2r_5)+r_5}+\int_{2r_5}^{r_4}e^{-v_3(r)}\,dr+3r_1.\label{eq-pipe:distanceest}
\end{align}
Since both $c_2<1$ and $h<1$, we have $e^{-v_3(r)}\leq\log\frac1r$ for all $r<1/e$. Also note that \eqref{eq-pipe:r5_condition} implies $e^{r_5}<2$. We use these facts and an explicit integral calculation to continue the estimate on \eqref{eq-pipe:distanceest}
\begin{align}
    d_{\G'}(\{r=0\}, \p D^2\times\RR) &\leq 3r_5e^{r_5}e^{-v_3(2r_5)}+\int_{0}^{r_1}\log\frac1r\,dr+3r_1 \\
    &\leq 6r_5\log\frac1{r_5}+\int_0^{r_1}\log\frac1r\,dr+3r_1 \\
    &\leq 6r_1\log\frac1{r_1}+r_1\left(1+\log\frac1{r_1}\right)+3r_1, \label{eq-pipe:distance}
\end{align}
where in the last line we used the fact that $r\mapsto r\log\frac1r$ is increasing for $r\in (0,1/e)$ on the first term. According to the smallness condition \eqref{eq-pipe:r1_cond_1}, the above estimate implies $d_{\G'}(\{r=0\}, \p D^2\times\RR)<r_0$, as desired.

    
For condition (VII), the volume of $D^2\times[0,1]$ is equal to the total integral of $e^{-u_i}f_i$ along the four pieces:
\begin{equation}\label{e:volume1}\begin{aligned}
    \frac1{2\pi}\vol(D^2\times[0,1],\G') &= e^{-u_3(r_5)}\int_0^{r_6}\sn_A(r)\,dr+\int_{r_5}^{r_4}e^{-u_3(r)}f_3(r)\,dr \\
    &\qquad +\int_{r_3}^{r_2}\alpha\sn_{k-\epsilon}(r)\,dr + \int_{r_1}^{2r_1}\sn_k(r)\,dr.
\end{aligned}\end{equation}
We use the smallness condition \eqref{eq-pipe:r1_cond_2} and $r_2\leq 2r_1$ to estimate the last two terms in \eqref{e:volume1}:
\begin{align}\label{e:last2est}
\begin{split}
    \int_{r_3}^{r_2}\alpha\sn_{k-\epsilon}(r)\,dr + \int_{r_1}^{2r_1}\sn_k(r)\,dr&\leq r_2^2+4r_1^2 \leq 8r_1^2.
\end{split}
\end{align}
For the second term in \eqref{e:volume1}, we use the properties of $u_3$ from Lemma \ref{lemma-pipe:smoothing_u} and $e^{r_5}<2$ with the basic estimates $f_3(r)\leq r$ and $e^{-v_3(r)}\leq \log\frac1r$ to find
\begin{align}\label{eq-pipe:secondterm}
        \int_{r_5}^{r_4}e^{-u_3(r)}f_3(r)\,dr &\leq \int_{r_5}^{2r_5}e^{-v_3(2r_5)+r_5}f_3(r)\,dr + \int_{2r_5}^{r_4}e^{-v_3(r)}f_3(r)\,dr\\
        {}&\leq 4r_5^2\log\frac1{2r_5} +\int_{2r_5}^{r_4}r\log\frac1r\,dr\\
        {}&\leq 4r_4^2\log\frac1{r_4}+\frac14 r_4^2\left(2\log\frac{1}{r_4} +1\right)
\end{align}
where we have used the fact that $r\mapsto r^2\log\tfrac1r$ is increasing for $r<e^{-1/2}$ in the last line. The first term in \eqref{e:volume1} can be estimated using the fact that $\sn_A(r)\leq r$ with the bounds \eqref{eq-pipe:r5_condition} and \eqref{eq-pipe:r6_r5}:
\begin{align}\label{eq-pipe:firstterm}
    e^{-u_3(r_5)}\int_0^{r_6}\sn_A(r)\,dr\leq 2\left(\log\frac1{2r_5}\right)\frac12r_6^2 \leq r_4r_5\log\frac1{r_5}\leq r_4^2\log\frac1{r_4}.
\end{align}
Combining \eqref{e:volume1}, \eqref{e:last2est}, \eqref{eq-pipe:secondterm}, and \eqref{eq-pipe:firstterm} with $r_4=\epsilon r_1^2$ yields
\begin{align}
    \vol(D^2\times[0,1],\G')&\leq 2\pi \left(6r_4^2\log\frac1{r_4}+10r_1^2\right)\\
    &\leq 2\pi \left(6\varepsilon r_4\log\frac1{r_4}+10\right)r_1^2\\
    &\leq 2\pi(6\epsilon+10)r_1^2
\end{align}
where we have used the fact that $r\log\tfrac1{r_4}<1$.
On the other hand, we have $V_0=2\pi\int_0^{2r_1}\sn_k(r)\,dr\geq \frac12\pi r_1^2$ by \eqref{eq-pipe:r1_cond_2}, and therefore condition (VII) is satisfied. This finishes the proof of the claim.
\end{proof}

The only remaining task is to improve the regularity of $\G'$ from $C^{1,1}$ to $C^\infty$ without significantly damaging its main properties. More precisely, we make a small perturbation of $\bar f$ to smooth out the discontinuities in $\bar f''$ across the junctions between neighboring pieces, and then form the desired smooth metric \eqref{eq-pipe:general_form}. This is accomplished by making use of the following lemma, whose proof is left to Subsection \ref{subsec:smoothing}.
\begin{lemma}\label{lemma-pipe:smoothing_f}
    Suppose we are given a function $u\in C^\infty((-t,t))$ and a positive function $\bar f\in C^{1,1}((-t,t))$ which is smooth on both $(-t,0]$ and $[0,t)$. Assume that 
    \begin{equation}
        e^{2u}\left(-\frac{\bar f''}{\bar f}-(u')^2\right)\geq\lambda
    \end{equation} 
    holds on both $(-t,0)$ and $(0,t)$ for some $\lambda\in\mathbb{R}$.\footnote{Since $\bar f$ is smooth up to the left and right side of $0$, this inequality also holds at $0$ in the sense of one-sided derivatives.}
    Given any small $\mu\in(0,t/2)$, there exists another positive function $f\in C^\infty((-t,t))$ such that
\begin{enumerate}
    \item $||f-\bar f||_{C^1((-t,t))}\leq\mu$, and $f=\bar f$ outside $(-\mu,\mu)$,
    \item $e^{2u}\left(-\frac{f''}{f}-(u')^2\right)\geq\lambda-\mu$ holds on $(-t,t)$.
    \end{enumerate}
\end{lemma}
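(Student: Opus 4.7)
The plan is to modify $\bar f$ only in a small neighborhood of $0$ by convolution with a mollifier, and to blend this with the original $\bar f$ via a smooth partition of unity. Fix $\sigma\in(0,\mu/4)$ and a smooth cutoff $\chi:(-t,t)\to[0,1]$ with $\chi=0$ on $[-\sigma,\sigma]$, $\chi=1$ outside $(-2\sigma,2\sigma)$, and $|\chi^{(k)}|\leq C\sigma^{-k}$ for $k=0,1,2$. Let $\psi\in C^\infty_c((-1,1))$ be an even non-negative mollifier of unit integral, set $\psi_\epsilon(r)=\epsilon^{-1}\psi(r/\epsilon)$, and put $\tilde f=\bar f*\psi_\epsilon$ for a scale $\epsilon\ll\sigma$ to be chosen (so $\tilde f$ is smooth on $(-t+\epsilon,t-\epsilon)\supset[-2\sigma,2\sigma]$). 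Define
\begin{equation*}
f=\chi\bar f+(1-\chi)\tilde f.
\end{equation*}
Then $f=\bar f$ on $(-t,-2\sigma]\cup[2\sigma,t)$ (where $\bar f$ is already smooth) and $f=\tilde f$ on $[-\sigma,\sigma]$; hence $f\in C^\infty((-t,t))$ and $f=\bar f$ outside $(-\mu,\mu)$.

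\textbf{Two key estimates.} Because $\bar f\in C^{1,1}$ and $\psi$ is even, Taylor expansion produces $\|\tilde f-\bar f\|_{L^\infty}=O(\epsilon^2)$ and $\|\tilde f'-\bar f'\|_{L^\infty}=O(\epsilon)$. These give $\|f-\bar f\|_{C^1}=O(\epsilon/\sigma)$, which is at most $\mu$ once $\epsilon$ is small enough. For the curvature-type inequality, differentiating twice yields
\begin{equation*}
f''=\chi\bar f''+(1-\chi)\tilde f''+\chi''(\bar f-\tilde f)+2\chi'(\bar f'-\tilde f'),
\end{equation*}
in which the final two cross terms are bounded by $O(\epsilon^2/\sigma^2)$ and $O(\epsilon/\sigma)$ respectively. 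The hypothesis $\bar f''\leq-\bar f\,[e^{-2u}\lambda+(u')^2]$ holds almost everywhere on $(-t,t)$; convolving against $\psi_\epsilon$ and using continuity of $\bar f,u,u'$ yields
\begin{equation*}
\tilde f''(r)\leq -\bar f(r)\bigl[e^{-2u(r)}\lambda+(u'(r))^2\bigr]+O(\epsilon).
\end{equation*}
Combining the bounds on $\chi\bar f''+(1-\chi)\tilde f''$ with the cross terms, and dividing by the positive function $f=\bar f+O(\epsilon^2)$, gives
\begin{equation*}
-f''/f\geq e^{-2u}\lambda+(u')^2-O(\epsilon/\sigma),
\end{equation*}
so that $e^{2u}(-f''/f-(u')^2)\geq\lambda-O(\epsilon/\sigma)\geq\lambda-\mu$ provided $\epsilon$ is small enough in terms of $\mu,\sigma$, and the $C^{1,1}$ norm of $\bar f$.

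\textbf{Main obstacle.} The principal technical point is that derivatives of the cutoff $\chi$ scale like negative powers of $\sigma$, while they multiply the differences $\bar f-\tilde f$ and $\bar f'-\tilde f'$ that are only polynomially small in the mollification scale $\epsilon$. The two-scale structure $\epsilon\ll\sigma$ is precisely what defeats these cross terms; in particular the improved estimate $\|\tilde f-\bar f\|_{L^\infty}=O(\epsilon^2)$, which uses both the symmetry of $\psi$ and the $C^{1,1}$ regularity of $\bar f$, is needed to tame the $\chi''$ contribution. In practice the choices are made in the order: first pick $\sigma=\mu/4$, then take $\epsilon$ small enough, also depending on $\|\bar f\|_{C^{1,1}}$ and $\|u\|_{C^1}$.
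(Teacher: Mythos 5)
Your argument is correct and follows essentially the same strategy as the paper's proof: mollify $\bar f$ at a small scale $\epsilon$ and blend the mollified function with $\bar f$ using a smooth cutoff supported near the origin, then show the cross terms coming from derivatives of the cutoff vanish as $\epsilon\to0$. Two minor observations. First, the paper phrases the estimate on the mollified second derivative through the pointwise sandwich $\inf_{|y-x|\leq\epsilon,\,y\neq0}\bar f''(y)\leq\bar f''_\epsilon(x)\leq\sup_{|y-x|\leq\epsilon,\,y\neq0}\bar f''(y)$ and then carefully transfers the inequality from a nearby point $y$ to $x$ by adding and subtracting differences; you instead convolve the differential inequality $\bar f''\leq-\bar f\,[e^{-2u}\lambda+(u')^2]$ directly and invoke the Lipschitz modulus of the right-hand side. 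These are equivalent, and both are sound because $\bar f'$ being Lipschitz guarantees $\bar f''_\epsilon=\bar f''*\psi_\epsilon$ with $\bar f''\in L^\infty$. Second, the auxiliary scale $\sigma$ and the $O(\epsilon^2)$ improvement from the even mollifier are not actually needed in your own ordering of the parameters: you fix $\sigma=\mu/4$ first, so the bounds $|\chi^{(k)}|\leq C\sigma^{-k}$ are frozen constants, and then any estimate $\|\tilde f-\bar f\|_{C^1}=o_\epsilon(1)$ already makes the cross terms vanish as $\epsilon\to0$. So the remark in your ``main obstacle'' paragraph slightly overstates what is required; the paper correspondingly uses a cutoff at the fixed scale $\mu$ and does not separate the scales. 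None of this affects correctness.
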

Apply Lemma \ref{lemma-pipe:smoothing_f} at each of the three junction points of $\bar f$, with $\lambda=k-\epsilon$ and
\begin{equation}
    \mu<10^{-3}\min\{\epsilon,r_6,r_5-r_4,r_3-r_2,r_1\}
\end{equation}
to be chosen sufficiently small below. Let $f$ be the function obtained, and define the smooth metric $\G=e^{-2u}(dr^2+f^2d\theta^2)+e^{2u}dt^2$ for $r\in[0,\rho]$. By item 2 in Lemma \ref{lemma-pipe:smoothing_f}, we have $R_{\G}\geq 2(k-2\epsilon)$. We have already shown that $\G'$ satisfies conditions (II)\,--\,(VII) of Theorem \ref{t:T3tube}, and it remains to verify that $\G$ satisfies these conditions as well. Condition (II) is not affected by the perturbation of $f$ since $\mu<\frac12r_1$. Conditions (III) and (IV) do not involve $f$ thus continue to hold. Conditions (V)\,--\,(VII) are open and depend $C^1$-continuously on $f$, and therefore continue to hold for $f$ if $\mu$ is sufficiently small. This completes the proof of Theorem \ref{t:T3tube} with $2\epsilon$ in place of $\epsilon$, from which the original statement quickly follows.
\end{proof}

\subsection{Proof of the smoothing lemmas}\label{subsec:smoothing}
Let us prove the lemma used in the construction of $u_3$ in piece (iii) above.

\begin{proof}[Proof of Lemma \ref{lemma-pipe:smoothing_u}]
    By further decreasing $\mu$ and using the boundedness of $v'$, we may assume that
    \begin{equation}
        \sup_{s\leq r\leq s+\mu}|v(r)-v(s+\mu)|\leq\frac14\sqrt\mu.\label{eq-pipe:smoothing_u_aux0}
    \end{equation}
    Fix a smooth function $\eta(r)$ on $[s,t]$ such that $\eta|_{[s,s+\mu/2]}\equiv0$, $\eta|_{[s+\mu,t]}\equiv1$, and $0\leq\eta(r)<1$ when $r\in[s+\mu/2,s+\mu)$. Leveraging this last property, by the Monotone Convergence Theorem there exists sufficiently large $q\in\RR$ such that
    \begin{equation}\label{eq-pipe:smoothing_aux1}
        \int_s^{s+\mu}\eta(\rho)^q\,d\rho\leq\frac{\mu}{4\sup_{[s,t]}|v'|}.
    \end{equation}
    For this choice of $q$, define the smooth function
    \begin{equation}
        u(r)=v(s+\mu)+\int_{s+\mu}^r\eta(\rho)^qv'(\rho)\,d\rho,\qquad s\leq r\leq t.
    \end{equation}
    According to \eqref{eq-pipe:smoothing_aux1}, we have 
    \begin{equation}\label{eq-pipe:smoothing_u_aux3}
        \sup_{r\in[s,s+\mu]}|u(r)-v(s+\mu)|\leq\frac14\mu,
    \end{equation} which verifies condition 2 in the lemma. From the construction, we have $u=v$ on $[s+\mu,t]$, and $u$ is constant on $[s,s+\mu/2]$, which verifies condition 1. 
    
    It remains to show condition 3. For $s+\mu\leq r\leq t$, where $u(r)=v(r)$, the desired lower bound follows directly from the hypothesis \eqref{eq-pipe:smoothing_u_aux1}. For $s\leq r\leq s+\mu$, using $|u'|=|\eta^qv'|\leq|v'|$ and \eqref{eq-pipe:smoothing_u_aux1} we have
    \begin{equation}
        e^{2u}\Big(-\frac{f''}f-(u')^2\Big)
        \geq e^{2u}\Big(-\frac{f''}f-(v')^2\Big)
        \geq e^{2u}\cdot\lambda e^{-2v}
        \geq\lambda e^{-2|u-v|}.\label{eq-pipe:smoothing_u_aux2}
    \end{equation}
    Notice that, for any $r\in[s,s+\mu]$, inequalities \eqref{eq-pipe:smoothing_u_aux0} and \eqref{eq-pipe:smoothing_u_aux3} imply
    \begin{align}
        |u(r)-v(r)| &\leq |u(r)-v(s+\mu)|+|v(s+\mu)-v(r)| \\
        &\leq \frac14\mu+\frac14\sqrt\mu \\
        &\leq \frac12\sqrt\mu,
    \end{align}
    where we have used the assumption $\mu\leq1$ in the last inequality.
    Since $e^{-\sqrt\mu}\geq 1-\sqrt\mu$, condition 3 follows from combining the above inequality with \eqref{eq-pipe:smoothing_u_aux2}.
\end{proof}


The final task of the section is to prove the lemma used at the end of the proof of Theorem \ref{t:T3tube} to smooth the $f_i$ functions.

\begin{proof}[Proof of Lemma \ref{lemma-pipe:smoothing_f}]

While the existence of a smooth function $\tilde{f}$ satisfying property $1$ is standard, achieving the scalar curvature bound $2$ requires explanation. We proceed by reviewing the standard mollification constructions, establishing property $1$, then proving property $2$. Let $\eta\in C^\infty(\mathbb{R})$ be a positive even function with support contained in $[-1,1]$ and such that $\int_{-1}^1\eta(r)\,dr=1$. For positive $\epsilon\ll t/8$, define the mollifier $\eta_\epsilon(r)=\epsilon^{-1}\eta(\epsilon^{-1}r)$, thus $\int_{-\epsilon}^\epsilon\eta_\epsilon(r)\,dr=1$. Consider the standard mollification $\bar f_\epsilon$ given by convolution $\bar f_\epsilon=\bar f*\eta_\epsilon$, which is defined on $(-t+\epsilon,t-\epsilon)$. Note that $\bar f'_\epsilon=\bar f'*\eta_\epsilon$. It follows that, as $\epsilon\to0$, $\bar f_\epsilon$ converges to $\bar f$ in $C^1([-t/2,t/2])$ and in $C^\infty\big([-3t/4,-t/4]\cup[t/4,3t/4]\big)$. Since $\bar f'$ is Lipschitz across $0$, we have $\bar f''_\epsilon=\bar f''*\eta_\epsilon$. This implies
\begin{equation}\label{eq-pipe:smoothing_f2}
    \inf_{|y-x|\leq\epsilon, y\ne0}\bar f''(y)\leq \bar f''_\epsilon(x)\leq\sup_{|y-x|\leq\epsilon, y\ne0}\bar f''(y),\quad\text{ for } x\in[-3t/4,3t/4].
\end{equation}

Next, in order to avoid disturbing $\bar f$ away from a neighborhood of $0$, we introduce a smooth cutoff function $h$ satisfying $h\equiv1$ on $[-\mu/2,\mu/2]$, $h\equiv0$ on $[-t,-\mu]\cup[\mu,t]$, and $0\leq h\leq1$. Consider $f_\epsilon=h\bar f_\epsilon+(1-h)\bar f$, which well approximates $\bar f$ in $C^1((-t,t))$ and in the $C^2$ sense away from $0$. Hence
    \begin{equation}\label{eq-pipe:smoothing_f3}
    \inf_{|y-x|\leq\epsilon, y\ne0}\bar f''(y)-o_\epsilon(1)\leq f''_\epsilon(x)\leq\sup_{|y-x|\leq\epsilon, y\ne0}\bar f''(y)+o_\epsilon(1),\quad\text{ for }  x\in[-3t/4,3t/4],
    \end{equation}
where, above and throughout, $o_\epsilon(1)$ denotes a term which converges uniformly to $0$ for all $x\in[-3t/4,3t/4]$ as $\epsilon\to0$. Let $a>0$ be such that $\inf_{[-3t/4,3t/4]}\bar f\geq a$ and let $b>0$ be such that both $\sup_{[-3t/4,3t/4]}e^{2u}\leq b$ and $||\bar f||_{C^{1,1}([-3t/4,3t/4])}\leq b$. Then for sufficiently small $\epsilon$ we have $f_\epsilon\geq a/2>0$ on $[-3t/4,3t/4]$. Property 1 follows. As a consequence, we have $1/f_\epsilon\to 1/\bar f$ uniformly on $[3t/4,3t/4]$.

To show property $2$, fix a point $x\in[-t/2,t/2]$ and let $J=[x-\epsilon,x+\epsilon]\setminus\{0\}$. To begin, we use \eqref{eq-pipe:smoothing_f3} and the properties of $a,b$ to find
\begin{align}
    e^{2u(x)}\Big(-\frac{f''_\epsilon(x)}{f_\epsilon(x)}-u'(x)^2\Big)
    &\geq -\sup_{y\in J} e^{2u(x)}\frac{\bar f''(y)+o_\epsilon(1)}{f_\epsilon(x)}
        -e^{2u(x)}u'(x)^2 \\
    &\geq -e^{2u(x)}\frac{o_\varepsilon(1)}{f_\epsilon(x)}
        -\sup_{y\in J} e^{2u(x)}\frac{\bar f''(y)}{f_\epsilon(x)}
        -e^{2u(x)}u'(x)^2 \label{eq-pipe:smoothing_fscal1}\\
    &\geq -b\cdot \frac2a\cdot o_\epsilon(1)
        -\sup_{y\in J} e^{2u(x)}\bar f''(y)\left(\frac1{f_\epsilon(x)}-\frac1{\bar f(x)}\right) \nonumber\\
    &\qquad -\sup_{y\in J} \bar f''(y)\left(\frac{e^{2u(x)}}{\bar f(x)}-\frac{e^{2u(y)}}{\bar f(y)}\right)
        -\sup_{y\in J} e^{2u(y)}\left(\frac{\bar f''(y)}{\bar f(y)}+u'(y)^2\right) \nonumber\\
    &\qquad +\left(\inf_{y\in J} e^{2u(y)}u'(y)^2-e^{2u(x)}u'(x)^2\right),\label{eq-pipe:smoothing_fscal2}
\end{align}
where in the last inequality, we have added and subtracted three quantities to the second term of \eqref{eq-pipe:smoothing_fscal1} and used the facts that, for any two functions $A$ and $B$, we have $\sup(A+B)\leq\sup A+\sup B$ and $-\sup(-A)=\inf A$. Using the properties of $a,b$, the Fundamental Theorem of Calculus, and the original scalar curvature bound by $2\lambda$ we continue the above estimate to find 
\begin{align}
    \begin{split}e^{2u(x)}\Big(-\frac{ f''_\epsilon(x)}{ f_\epsilon(x)}-u'(x)^2\Big)
    &\geq -2a^{-1}b\cdot o_\epsilon(1)
        -b\cdot b\cdot ||1/ f_\epsilon-1/\bar f||_{C^0([-3t/4,3t/4])} \\
    &\qquad -b\cdot\epsilon\cdot||e^{2u}/\bar f||_{C^1([-3t/4,3t/4])}
        +\lambda \\
    &\qquad -\epsilon\cdot||e^{2u}(u')^2||_{C^1([-3t/4,3t/4])}\end{split}\\
    &\geq \lambda-o_\epsilon(1),
\end{align}
where in the last inequality, we have used the remark at the end of the previous paragraph. One may then choose $\epsilon$ sufficiently small such that condition 2 holds, proving the lemma.
\end{proof}

\section{Construction by cutoff functions}\label{sec:analytic}

In this section, we give the second proof, Proof B, of Theorem \ref{t:T3tube} by directly concatenating the main metric \eqref{eq-intro:f_and_u} with the product metric $S^2_k\times\RR$ via cutoff functions.

\begin{remark}
    The construction presented in this section actually proves a slightly stronger version of Theorem \ref{t:T3tube}. In particular, several properties in Theorem \ref{t:T3tube} can be improved due to this proof. First, when $r\in[r_1,2r_1]$ the metric is exactly equal to $dr^2+\sn_k(r)^2d\th^2+dt^2$, allowing us to take the identity as the isometry mentioned in property $(II)$. Property $(IV)$ can be improved to the statement $e^{u}\equiv\delta$ in a neighborhood of $r=0$, and item $(VI)$ can be strengthened to $d_{\G}(\{0\}\times\RR,\p D^2\times\RR)\leq 3r_1$. 
\end{remark}


The construction is as follows. We fix two smooth functions $\zeta$ and $\eta$ satisfying
\begin{align}\label{eq-cutoff:etazeta}
\begin{split}
    & \zeta|_{[0,\frac12]}=0,\quad \zeta|_{[1,\infty)}=1,\quad    0\leq\zeta'\leq4,\quad |\zeta''|\leq16,\\
    & \eta|_{[0,\frac12]}=1,\quad \eta|_{[1,\infty)}=0,\quad 0\geq\eta'\geq-4, \quad|\eta''|\leq16.
\end{split}
\end{align} 
For a given $k\in\mathbb{R}$ and positive constants $r_1,r_2,c_1,c_2$ with $r_2\ll r_1$, we set
\begin{align}
    \psi(r)&=\int_0^r\Big[\zeta\big(\frac s{r_2}\big)\frac{1}{s\log^2(1/s)}+\big(1-\zeta\big(\frac s{r_2}\big)\big)\frac s{r_2}\Big]\,ds, \label{eq-cutoff:def_psi}\\
    h(r)&=1-c_1\eta\big(\frac r{r_1}\big)\psi(r), \label{eq-cutoff:def_h}
\end{align}
and consider the functions
\begin{align}
    f(r) &= \sn_k(r)\cdot h(r)\label{eq-cutoff:def_f}\\
    u(r) &= -c_2\int_r^\infty\eta\big(\frac{4s}{r_1}\big)\zeta\big(\frac s{4r_2}\big)\frac{ds}{s\log(1/s)}.\label{eq-cutoff:def_u}
\end{align}

Here are some basic facts concerning these expressions. Since $f(r)=\sn_k(r)$ and $u(r)\equiv0$ in $[r_1,2r_1]$, the metric $\G=e^{-2u}(dr^2+f^2d\th^2)+e^{2u}dt^2$ smoothly concatenates with the product $S^2_k\times\RR$. Also, we find that $u(r)$ is constant and $f(r)=\sn_k(r)\cdot(1-c_1r^2/2r_2)$ in $[0,r_2/2]$, thus $\G$ is smooth near the origin. For the case $k=0$, by replacing the cutoff functions $\eta,\zeta$ by the constant function 1 we exactly recover the formulas \eqref{eq-intro:f_and_u}, with the identification $\varphi=e^u$. The function $\eta$ plays the role of concatenating \eqref{eq-intro:metric} with the exterior product metric, while $\zeta$ plays the role of smoothing the singularity at $r=0$. As such, Figure \ref{fig:graph1} also serves as a visualization of $\G$.


The parameters $r_1,r_2,c_1,c_2$ need to be specified. The following lemma determines the choice of $r_1$.
\begin{lemma}\label{lemma-cutoff:r1}
    Given $r_0>0$ and $k\in\mathbb{R}$, there exists $r_1>0$ such that
    \begin{enumerate}[1.]
        \item $r_1<r_0$ and $r_1<\frac1{100(1+|k|)}$ (in particular, $\log(1/r_1)>4$ and $\log\log(1/r_1)>0$),
        \item for all $0<r\leq r_1$ we have
        \begin{equation}\label{eq-cutoff:sn}
            \frac12r\leq\sn_k(r)\leq2r,\quad \Big|\frac{\cn_k(r)}{\sn_k(r)}-\frac1r\Big|\leq|k|r,\quad
        \end{equation}
        \item for all $0<r\leq r_1$, $\frac12\leq a\leq2$ and $1\leq b\leq 5$ we have
        \begin{equation}\label{eq-cutoff:rawb}
            \frac1{r^a\big(\log\tfrac{1}{r}\big)^b}>100(|k|+1).
        \end{equation}
    \end{enumerate}
\end{lemma}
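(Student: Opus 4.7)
The plan is to choose $r_1$ small enough so that all three conditions hold simultaneously, extracting a separate smallness requirement from each and then taking the minimum. This is a purely analytic lemma about asymptotic behavior as $r_1 \to 0$, so there is no substantive geometric obstacle, only routine verification of limits.

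For condition (1), I will first impose $r_1 \leq \min\{r_0, (100(1+|k|))^{-1}\}$. Since $100(1+|k|)\geq 100 > e^4$, this automatically gives $r_1 < e^{-4}$, so $\log(1/r_1) > 4$ and $\log\log(1/r_1)>0$, handling the parenthetical remark for free.

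For condition (2), I will use the standard power-series expansions of $\sn_k$ and $\cn_k$. In all three cases of the sign of $k$ one has
\begin{equation}
    \sn_k(r) = r - \tfrac{k}{6}r^3 + O(k^2 r^5), \qquad \cn_k(r) = 1 - \tfrac{k}{2}r^2 + O(k^2 r^4),
\end{equation}
from which one computes $\cn_k(r)/\sn_k(r) = 1/r - (k/3)r + O(k^2 r^3)$. Thus $\sn_k(r)/r \to 1$ and $|\cn_k(r)/\sn_k(r) - 1/r| \leq \tfrac{|k|}{2} r$ for $r$ sufficiently small depending on $k$. Both inequalities in \eqref{eq-cutoff:sn} follow once $r_1$ is taken below a constant depending only on $k$.

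For condition (3), the key observation is that the function $F(r,a,b) = r^a(\log(1/r))^b$ is continuous on the compact parameter set $[1/2,2]\times[1,5]$ and, for each fixed $(a,b)$ in this set, satisfies $F(r,a,b)\to 0$ as $r\to 0^+$ (since $a \geq 1/2 > 0$). On the compact set and for $r < 1/e$, the supremum of $F$ is attained at $a=1/2$, $b=5$ (smallest exponent on $r$, largest exponent on $\log(1/r)$), giving $\sup_{a,b} F(r,a,b) = r^{1/2}(\log(1/r))^5$, which tends to $0$ as $r\to 0$. Hence for sufficiently small $r_1$ we have $\sup_{a,b} F(r_1,a,b) < (100(|k|+1))^{-1}$, which rearranges to \eqref{eq-cutoff:rawb} for all $r \leq r_1$ (note that $F$ is monotone in $r$ near $0$ for this exponent range). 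The lemma then follows by taking $r_1$ to be the minimum of the three smallness thresholds identified above.
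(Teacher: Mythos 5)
Your proof is correct and follows essentially the same strategy as the paper: both rely on the Taylor asymptotics of $\sn_k$, $\cn_k$ near $r=0$ for item (2) and on bounding $r^a(\log\tfrac1r)^b$ by $r^{1/2}(\log\tfrac1r)^5$ for item (3), then take $r_1$ small enough to meet all thresholds simultaneously. Your write-up is slightly more explicit (e.g., noting $100 > e^4$ and the monotonicity of $r^{1/2}(\log\tfrac1r)^5$ near $0$, which justifies passing from the bound at $r_1$ to all $r\leq r_1$), but the underlying argument is the same.
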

\begin{proof}
    This follows from the asymptotics of the functions involved near $r=0$. Indeed, for fixed $k$ we have $\sn_k(r)=r+O(r^3)$ and $\frac{\cn_k(r)}{\sn_k(r)}=\frac1r-\frac k3r+O(r^3)$ when $r\to0$, leading to item 2. To establish item 3, first note that $\lim_{r\to0}\frac1{r^{1/2}\log^5(1/r)}=\infty$. Moreover, if $\frac12\leq a\leq2$, $1\leq b\leq 5$, and $r<1/e$, then $r^a\log^b(1/r)\leq r^{1/2}\log^5(1/r)$. Then $r_1$ may be chosen sufficiently small to accomplish all items.
\end{proof}

The following technical lemma is useful.

\begin{lemma}\label{lemma-cutoff:psi}
    Suppose $0<r_2<r_1<e^{-4}$ and consider the function $\psi$ defined by \eqref{eq-cutoff:def_psi}. Then $\psi(r)\leq\frac12$ for all $r\leq r_1$.
\end{lemma}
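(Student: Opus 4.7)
The plan is to decompose $\psi$ into the two pieces coming from the convex combination inside the bracket, bound each piece separately, and use the smallness hypothesis $r_1 < e^{-4}$ to conclude.

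More precisely, I would write $\psi = I_1 + I_2$, where
\begin{equation}
    I_1(r) = \int_0^r \zeta\bigl(\tfrac{s}{r_2}\bigr)\frac{ds}{s\log^2(1/s)}, \qquad I_2(r) = \int_0^r \bigl(1-\zeta\bigl(\tfrac{s}{r_2}\bigr)\bigr)\frac{s}{r_2}\,ds.
\end{equation}
Both integrands are non-negative, so I may freely enlarge the integration range or replace the cutoff by $1$.

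For $I_1$, the key observation is the elementary antiderivative
\begin{equation}
    \frac{d}{dr}\frac{1}{\log(1/r)} = \frac{1}{r\log^2(1/r)},
\end{equation}
which gives $\int_0^r \frac{ds}{s\log^2(1/s)} = \frac{1}{\log(1/r)}$ (the integral converges at $0$ because $\zeta(s/r_2)=0$ for $s\leq r_2/2$, but it is in fact convergent anyway). Hence, for $r\leq r_1 < e^{-4}$,
\begin{equation}
    I_1(r) \leq \frac{1}{\log(1/r)} \leq \frac{1}{\log(1/r_1)} < \frac{1}{4}.
\end{equation}
For $I_2$, the cutoff factor $1-\zeta(s/r_2)$ vanishes for $s\geq r_2$, so
\begin{equation}
    I_2(r) \leq \int_0^{r_2} \frac{s}{r_2}\,ds = \frac{r_2}{2} < \frac{e^{-4}}{2} < \frac{1}{4}.
\end{equation}
Adding the two estimates yields $\psi(r) \leq \tfrac{1}{4}+\tfrac{1}{4} = \tfrac{1}{2}$, as desired.

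There is no real obstacle to this argument; the only thing to notice is that the hypothesis $r_1 < e^{-4}$ is precisely what makes both $\frac{1}{\log(1/r)}$ and $r_2/2$ comfortably smaller than $1/4$. No properties of $\zeta$ beyond $0\leq\zeta\leq 1$ and its support conditions from \eqref{eq-cutoff:etazeta} are needed.
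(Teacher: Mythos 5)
Your proof is correct and follows essentially the same route as the paper: both split $\psi$ into the two integrals, bound the first by the exact antiderivative $\frac{1}{\log(1/r)}$ using $\zeta\leq 1$, bound the second by $r_2/2$ using the support of $1-\zeta$, and invoke $r_1<e^{-4}$. The paper's argument is simply a more compressed version of the same computation.
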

\begin{proof}
    We have
    \[\psi(r)\leq\int_0^r\frac{ds}{s\log^2{1/s}}+\int_0^{\min(r,r_2)}\frac s{r_2}\,ds\leq\frac1{\log(1/r)}+\frac{r_2}2,\]
    which is less than $1/2$ by our assumptions.
\end{proof}

Finally, the following lemma determines the choice of $c_1,c_2,r_2$.
\begin{lemma}\label{lemma-cutoff:c1c2r2}
Given $\epsilon,\delta>0$, $r_1\in(0,e^{-4})$, and $k\in\mathbb{R}$, there exists a choice of $c_1,c_2>0,$ and $r_2\in(0,r_1)$ such that
\begin{enumerate}[1.]
    \item the function $h$ defined by \eqref{eq-cutoff:def_h} satisfies
    \begin{equation}
        \inf_{[r_1/4,r_1]}\Big(-\frac{h''}{h}-2\frac{h'}{rh}-2|k|r\frac{h'}h\Big)\geq-\epsilon,
    \end{equation}
    \item $c_2\leq c_1<\min\{r_1,\frac1{100(1+|k|)}\}$ and $c_2\big[\log\log(\frac1{\min\{r_1/64,c_1^2\}})-\log\log(\frac1{r_1})\big]<\log(1/\delta)$,
    \item $r_2<\min\{\frac1{64}r_1,c_1^2\}$ and satisfies
    \begin{equation}\label{eq-cutoff:choice_of_r2}
        c_2\int_0^\infty\eta\big(\frac{4s}{r_1}\big)\zeta\big(\frac s{4r_2}\big)\frac{ds}{s\log(1/s)}=\log\frac1\delta.
    \end{equation}
\end{enumerate}
\end{lemma}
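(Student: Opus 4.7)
The plan is to fix the three parameters in sequence: first choose $c_1$ to secure item (1), then $c_2$ to satisfy item (2), and finally locate $r_2$ by an intermediate value argument to achieve item (3).

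For item (1), I would compute $h'$ and $h''$ directly by the product and chain rules. On the target interval $[r_1/4, r_1]$, provided $r_2 < r_1/4$ (which will be arranged below), the cutoff $\zeta(r/r_2)$ equals $1$, so $\psi'(r) = 1/(r\log^2(1/r))$ and $\psi''(r) = (r^2\log^2(1/r))^{-1}[2/\log(1/r) - 1]$; in particular both are bounded on $[r_1/4, r_1]$ by constants depending only on $r_1$. Combined with the universal bounds on $\eta,\eta',\eta''$ from \eqref{eq-cutoff:etazeta} and $|\psi|\leq1/2$ from Lemma~\ref{lemma-cutoff:psi}, this gives $|h'|, |h''| \leq C(r_1)\,c_1$ uniformly in $r_2$, while $h \geq 1 - c_1/2 \geq 1/2$ for $c_1 \leq 1$. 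Consequently the expression in (1) is bounded in magnitude by $C(r_1,k)\,c_1$. Choosing $c_1$ small enough, subject also to $c_1 < \min\{r_1, 1/(100(1+|k|))\}$, secures item (1).

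For item (2), set $m := \min\{r_1/64, c_1^2\}$. Since $m < r_1 < e^{-4}$, the bracket $\log\log(1/m) - \log\log(1/r_1)$ is strictly positive, so any sufficiently small $c_2 \in (0, c_1]$ satisfies the required inequality; fix such a $c_2$. For item (3), define the continuous function
\[
    I(r_2) := c_2 \int_0^\infty \eta\bigl(\tfrac{4s}{r_1}\bigr)\,\zeta\bigl(\tfrac{s}{4r_2}\bigr)\,\frac{ds}{s\log(1/s)}.
\]
Since $\eta(4s/r_1)\equiv1$ on $[0,r_1/8]$ and $\zeta(s/4r_2)\equiv1$ on $[4r_2,\infty)$, the integrand equals $1/(s\log(1/s))$ on $[4r_2, r_1/8]$, so
\[
    I(r_2) \geq c_2\bigl[\log\log(1/4r_2)-\log\log(8/r_1)\bigr] \longrightarrow \infty \quad \text{as } r_2 \to 0^+.
\]
Conversely, the integrand defining $I(m)$ is supported in $[2m, r_1/4]$ and bounded above by $1/(s\log(1/s))$, yielding
\[
    I(m) \leq c_2\bigl[\log\log(1/2m)-\log\log(4/r_1)\bigr] < c_2\bigl[\log\log(1/m)-\log\log(1/r_1)\bigr] < \log(1/\delta)
\]
by item (2). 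Continuity and the intermediate value theorem then produce $r_2 \in (0, m)$ with $I(r_2) = \log(1/\delta)$, which is \eqref{eq-cutoff:choice_of_r2}.

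The main obstacle is the estimate in item (1): producing a clean bound on the awkward expression $-h''/h - 2h'/(rh) - 2|k|rh'/h$ that is linear in $c_1$ uniformly over admissible choices of $r_2$. Once this is in hand, items (2) and (3) are simply arrangements of inequalities and a single application of the intermediate value theorem.
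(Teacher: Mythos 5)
Your proposal is correct and follows essentially the same route as the paper: fix $c_1$ first to secure item (1) with a bound that is uniform in $r_2$ (once $r_2$ is known to be $<r_1/4$), then fix $c_2$ for item (2), and finally locate $r_2$ by the intermediate value theorem applied to $I(r_2)$. The only minor variation is in item (1), where you extract an explicit bound of order $c_1$ on the curvature expression, whereas the paper invokes continuity at $c_1=0$ of the function $G(c_1)=\min_{[r_1/4,r_1]}F(c_1,\cdot)$; both are valid and the overall argument is the same.
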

\begin{proof}
    We first establish some preliminary observations which are used below without mention. Lemma \ref{lemma-cutoff:psi} states $\psi\leq\frac12$ on $[0,r_1]$. By taking $c_1<1$, this implies $h\geq\frac12$. Next, we note that by taking $r_2<\frac1{64}r_1$, we have $\zeta(r/r_2)\equiv1$ on $[r_1/4,r_1]$. This implies that  $\psi'(r)=\frac1{r\log^2(1/r)}$ for $r\in[r_1/4,r_1]$. In particular, $\psi'$ and $\psi''$ are independent of $r_2$.
    
    To establish item 1 of the lemma we directly compute the derivatives of $h(r)=1-c_1\eta(r/r_1)\psi(r)$:
    \begin{align}
        &h'(r)=-c_1r_1^{-1}\eta'\big(\frac r{r_1}\big)\psi(r)-c_1\eta(r/r_1)\psi'(r),\\
        &h''(r)=-c_1r_1^{-2}\eta''\big(\frac r{r_1}\big)\psi(r)
            -2c_1r_1^{-1}\eta'\big(\frac r{r_1}\big)\psi'(r)
            -c_1\eta(r/r_1)\psi''(r).
    \end{align}
    From this and \eqref{eq-cutoff:etazeta}, we obtain the bounds
    \begin{align}
        &|h'(r)|\leq 2c_1r_1^{-1}+c_1\psi'(r)\\
        &|h''(r)|\leq 8c_1r_1^{-2}+8c_1r_1^{-1}\psi'(r)+c_1|\psi''(r)|.
    \end{align}
    This leads to the inequality
    \begin{align}
        -\frac{h''}{h}-2\frac{h'}{rh}-2|k|r\frac{h'}h &\geq -2|h''|-4r^{-1}|h'|-4|k|r|h'| \\
        &\begin{aligned}
            &\geq -2\Big[8c_1r_1^{-2}+8c_1r_1^{-1}\psi'(r)+c_1|\psi''(r)|\Big] \\
            &\qquad -\big(4r^{-1}+4|k|r\big)\Big[2c_1r_1^{-1}+c_1\psi'(r)\Big].
        \end{aligned} \label{eq-cutoff:aux1}
    \end{align}
    The right hand side of \eqref{eq-cutoff:aux1} depends only on $r,c_1,r_1$ and $k$ but not on $r_2$, as long as $r_2<\frac1{64}r_1$ and $r\in[\frac{r_1}4,r_1]$. As $r_1$ and $k$ are fixed, only $c_1,r$ enter as variables. Thus, we use $F(c_1,r)$ to denote the right hand side of \eqref{eq-cutoff:aux1}. By continuity, the function
    \begin{equation}
        G(c_1)=\min_{r\in[r_1/4,r_1]}F(c_1,r)
    \end{equation}
    is continuous for $c_1\in[0,1]$. Since $G(0)=0$, we may choose a sufficiently small $c_1$ (depending only on $r_1$ and $k$) such that $c_1<\min\{r_1,\frac1{100(|k|+1)}\}$ and $G(c_1)\geq-\epsilon$. We fix such a choice. This choice establishes item 1 of the lemma so long as $r_2<\frac1{64}r_1$ holds.

    The remaining task is to choose $c_2$ and $r_2$ which satisfy items 2 and 3 of the lemma. In particular, once this is achieved we obtain $r_2<\frac1{64}r_1$ which ensures item 1. Note that there exists a choice of $c_2>0$ (depending on $c_1, k, r_1, \delta$) such that item 2 is satisfied. We fix this choice. Denote $\bar r=\min\{\frac1{64}r_1,c_1^2\}$. Consider the continuous function
    \begin{equation}
        I(r_2)=c_2\int_0^\infty\eta\big(\frac{4s}{r_1}\big)\zeta\big(\frac s{4r_2}\big)\frac{ds}{s\log(1/s)},\quad r_2\in(0,\bar r].
    \end{equation}\updatetag{v3: fixed typo}
    By item 2 we have
    \begin{equation}
        I(\bar r)\leq c_2\int_{\bar r}^{r_1}\frac{ds}{s\log(1/s)}=c_2\Big[\log\log\frac1s\Big]_{\bar r}^{r_1}<\log(\frac1\delta).
    \end{equation}
    On the other hand, we have
    \begin{equation}
    \liminf_{r_2\to0^+}I(r_2)\geq c_2\liminf_{r_2\to0^+}\int_{4r_2}^{r_1/4}\frac{ds}{s\log(1/s)}=+\infty.
    \end{equation}
    In light of the last two inequalities, we may choose $r_2\in(0,\bar r)$ such that $I(r_2)=\log(1/\delta)$. This establishes item 3.
\end{proof}

\begin{proof}[Proof B of Theorem \ref{t:T3tube}] {\ }

Given $\varepsilon,\delta,k$ and $r_0$, we fix $r_1$ as specified by Lemma \ref{lemma-cutoff:r1} and then fix $c_1,c_2,$ and $r_2$ as specified by Lemma \ref{lemma-cutoff:c1c2r2}. Define the functions $u$ and $f$ by \eqref{eq-cutoff:def_f} and \eqref{eq-cutoff:def_u} with $r\in[0,2r_1]$. It remains to show that the metric $\G=e^{-2u}(dr^2+f^2d\th^2)+e^{2u}dt^2$ satisfies conditions $(I)$-$(VII)$ of Theorem \ref{t:T3tube}. As explained below \eqref{eq-cutoff:def_u}, $\G$ is smooth and satisfies conditions $(II)$ and $(III)$. Condition $(IV)$ follows from item 3 of Lemma \ref{lemma-cutoff:c1c2r2}; in fact we have $e^{u(0)}=\delta$ by \eqref{eq-cutoff:def_u} and \eqref{eq-cutoff:choice_of_r2}. The first task, occupying the bulk of the proof, is to verify the scalar curvature lower bound in condition $(I)$. Afterwards, the distance, volume, and mean convexity conditions (V), (VI), and (VII) are established.

\vspace{6pt}

\textbf{Preliminary bounds for $u$.} The following estimates are useful in later arguments. Note that
    \begin{equation}\label{eq-cutoff:u_bound}
        u(r)\geq -c_2\int_r^{r_1}\frac{ds}{s\log(1/s)}=-c_2\log\log\frac1r+c_2\log\log\frac1{r_1}.
    \end{equation}
Dropping the second term (which is positive by item 1 of Lemma \ref{lemma-cutoff:r1}), we obtain $e^{u(r)}\geq(\log\frac1r)^{-c_2}$. Since $u$ is constant in $[0,r_2]$, on this interval we obtain $e^{u(r)}\geq e^{u(r_2)}\geq(\log\frac1{r_2})^{-c_2}\geq(\log\frac1{r_2})^{-1}$.
    

\vspace{6pt}
    
\textbf{Scalar curvature lower bound.} According to Proposition \ref{prop:curvatures}, we need to establish the inequality
    \begin{equation}\label{eq-cutoff:scalar_curvature}
        e^{2u}\Big(-\frac{f''}f-(u')^2\Big)\geq(k-\epsilon),\qquad r\in[0,r_1].
    \end{equation}
By the fact $f(r)=\sn_k(r)\cdot h(r)$, we have
    \begin{align}
        -\frac{f''}f-(u')^2 &= k-\frac{h''}h-2\frac{\cn_k}{\sn_k}\frac{h'}h-(u')^2 \\
        &\geq k+\Big[-\frac{h''}h-2\frac{h'}{rh}-(u')^2\Big]-2|k|r\frac{|h'|}{h} ,\label{eq-cutoff:reduction_to_h}
    \end{align}
where we have used item 2 of Lemma \ref{lemma-cutoff:r1} in the last line. To prove \eqref{eq-cutoff:scalar_curvature}, we proceed by considering the intervals $[0,\frac14 r_1]$ and $[\frac14r_1,r_1]$ separately.

Assume $r\in[\frac14r_1,r_1]$. For such $r$, we have $u(r)\equiv0$. Thus \eqref{eq-cutoff:reduction_to_h} and item 1 of Lemma \ref{lemma-cutoff:c1c2r2} imply
    \begin{equation}\label{eq-cutoff:scal_interval2}
        e^{2u}\Big(-\frac{f''}f-(u')^2\Big)\geq k-\frac{h''}h-2\frac{h'}{rh}-2|k|r\frac{|h'|}{h}\geq k-\epsilon,
    \end{equation}
establishing the scalar curvature bound on this interval.
    
Now suppose $r\in[0,\frac14r_1]$. On this interval we have $\eta(r/r_1)\equiv1$, which simplifies the computation for $h$. For convenience, we set
\begin{equation}
    w:=\log(1/r)
\end{equation}
and record the following direct computations
    \begin{align}\label{eq-cutoff:dh}
        h'(r) &= -c_1\zeta\big(\frac r{r_2}\big)\frac1{rw^2}
        -c_1\big(1-\zeta\big(\frac r{r_2}\big)\big)\frac r{r_2}\\
        \begin{split}\label{eq-cutoff:ddh}
        h''(r) &= -\frac{c_1}{r_2}\zeta'\big(\frac r{r_2}\big)\frac1{rw^2}
                +c_1\zeta\big(\frac r{r_2}\big)\frac1{r^2w^2}
                -2c_1\zeta\big(\frac r{r_2}\big)\frac1{r^2w^3} \\
                &\qquad +\frac{c_1}{r_2}\zeta'\big(\frac r{r_2}\big)\frac r{r_2}
                -c_1\big(1-\zeta\big(\frac r{r_2}\big)\big)\frac 1{r_2}.
        \end{split}
    \end{align}
    Also note that
    \begin{equation}\label{eq-cutoff:du}
        |u'|=\frac{c_2}{rw}\eta\big(\frac{4r}{r_1}\big)\zeta\big(\frac r{4r_2}\big)\leq\frac{c_2}{rw}\zeta\big(\frac r{r_2}\big).
    \end{equation}
    
    In what follows, we adopt the shorthand  $\zeta=\zeta(r/r_2)$ and $\zeta'=\zeta'(r/r_2)$. Combining \eqref{eq-cutoff:dh}, \eqref{eq-cutoff:ddh}, and \eqref{eq-cutoff:du} we estimate the middle term of \eqref{eq-cutoff:reduction_to_h}
    \begin{align}
        \begin{split}-\frac{h''}h-\frac{2h'}{rh}-(u')^2 &\geq \frac1{1-c_1\psi}\Big[
            \frac{c_1}{r_2}\zeta'\frac1{rw^2}
            -c_1\zeta\frac1{r^2w^2}
            +2c_1\zeta\frac1{r^2w^3}
            -\frac{c_1}{r_2}\zeta'\frac r{r_2}
            +c_1(1-\zeta)\frac1{r_2}\Big], \\
        &\qquad +\frac{2}{r(1-c_1\psi)}\Big[c_1\zeta\frac1{rw^2}+c_1(1-\zeta)\frac r{r_2}\Big]-\frac{c_2^2}{r^2w^2}\zeta^2 
        \end{split}\\
        &= \frac{c_1}{1-c_1\psi}\Big[\zeta'\cdot(\frac1{r_2rw^2}-\frac r{r_2^2})+\frac{\zeta}{r^2w^2}+\frac{2\zeta}{r^2w^3}+3\frac{1-\zeta}{r_2}\Big]-\frac{c_2^2}{r^2w^2}\zeta^2.\label{eq-cutoff:0to18p1}
    \end{align}
    To proceed, notice that item 3 of Lemma \ref{lemma-cutoff:r1} implies $\frac1{rw^2}>1\geq\frac r{r_2}$ holds on $[0,r_2]$. Since $\zeta'\geq0$ and $\zeta$ is constant on $[r_2,r_1]$, it follows that $\zeta'\cdot(\frac1{r_2rw^2}-\frac r{r_2^2})\geq0$ holds everywhere on $[0,\frac14r_1]$. Using this observation, we continue the estimate \eqref{eq-cutoff:0to18p1}:
    \begin{align}
        -\frac{h''}h-\frac{2h'}{rh}-(u')^2 &\geq \frac{c_1}{1-c_1\psi}\Big[\frac{\zeta}{r^2w^2}+\frac{2\zeta}{r^2w^3}+3\frac{1-\zeta}{r_2}\Big]-\frac{c_2^2}{r^2w^2}\zeta^2 \\
        &\geq (c_1-c_2^2)\frac{\zeta}{r^2w^2}+c_1\frac{2\zeta}{r^2w^3}+3c_1\frac{1-\zeta}{r_2}\\
        {}&\geq c_1\frac{2\zeta}{r^2w^3}+3c_1\frac{1-\zeta}{r_2}, \label{eq-cutoff:interval1_term1}
    \end{align}
    where we have used the facts $\psi\geq0$ and $\zeta\leq1$ in the middle line, and the inequality $c_2\leq c_1\leq\sqrt{c_1}$ from item 2 of Lemma \ref{lemma-cutoff:c1c2r2} in the last line. Next, we control the last term in \eqref{eq-cutoff:reduction_to_h}. Using $c_1\psi\leq\frac12$ from Lemma \ref{lemma-cutoff:psi}, and the fact that $1-\zeta=0$ when $r\geq r_2$, we have
    \begin{align}
            2|k|r\frac{|h'|}{h}
            &= \frac{2|k|r}{1-c_1\psi}\Big|\frac{c_1\zeta}{rw^2}+c_1(1-\zeta)\frac r{r_2}\Big| \\
            &\leq 4|k|r\big[\frac{c_1\zeta}{rw^2}+c_1(1-\zeta)\big] \\
            &= 4|k|\frac{c_1\zeta}{w^2}+4|k|r_1c_1(1-\zeta),\qquad r\in[0,\frac14r_1] \label{eq-cutoff:lastterm18p1}.
    \end{align}
    Using $\frac1{r^2w}>4|k|$ from item 3 of Lemma \ref{lemma-cutoff:r1} and $r_1r_2\leq r_1^2\leq\frac1{2|k|}$ from item 1 of Lemma \ref{lemma-cutoff:r1}, we continue the estimate \eqref{eq-cutoff:lastterm18p1} to find
    \begin{equation}\label{eq-cutoff:interval1_term2}
        2|k|r\frac{|h'|}{|h|} \leq \frac{c_1\zeta}{r^2w^3}+2c_1\frac{1-\zeta}{r_2}.
    \end{equation}
    Finally, combining \eqref{eq-cutoff:reduction_to_h}, \eqref{eq-cutoff:interval1_term1}, \eqref{eq-cutoff:interval1_term2} we obtain
    \begin{equation}\label{eq-cutoff:interval1_est1}
        e^{2u}\Big(-\frac{f''}f-(u')^2\Big)\geq e^{2u}\Big(k+\frac{c_1\zeta}{r^2w^3}+c_1\frac{1-\zeta}{r_2}\Big),\qquad \qquad r\in [0,\frac14r_1].
    \end{equation}
    
    To complete the desired estimate \eqref{eq-cutoff:scalar_curvature} on $[0,\frac14 r_1]$, we divide the remaining work into three cases:

    {\noindent\bf{Case i:}} Assume $k\leq0$. In this case, we drop the last two terms in \eqref{eq-cutoff:interval1_est1} and use the fact that $u\leq0$ to obtain
    \begin{equation}\label{eq-cutoff:interval1_result1}
        e^{2u}\big(-\frac{f''}f-(u')^2\big)\geq e^{2u}k\geq k.
    \end{equation}
    
    {\noindent\bf{Case ii:}} Assume $k>0$ and $\zeta\leq\frac12$. Note that this implies $r<r_2$. For this range of $r$, we have the bound $e^u\geq w^{-c_2}\geq\frac1{\log(1/r_2)}$ from \eqref{eq-cutoff:u_bound}. Using this fact, 
    we estimate \eqref{eq-cutoff:interval1_est1} by dropping the first two terms:
    \begin{align}
        e^{2u}\Big(-\frac{f''}f-(u')^2\Big)&\geq e^{2u}c_1\frac{1-\zeta}{r_2} \\
        &\geq(\log\frac1{r_2})^{-2}\frac{c_1}{2r_2}=\frac{1}{2\sqrt{r_2}\log^2(1/r_2)}\cdot\frac{c_1}{\sqrt{r_2}}.\label{eq-cutoff:interval1_result2p}
    \end{align}
    Now apply the smallness conditions $\frac1{\sqrt{r_2}\log(1/r_2)^2}\geq100(|k|+1)$ from item 3 of Lemma \ref{lemma-cutoff:r1} and $r_2\leq c_1^2$ from item 3 of Lemma \ref{lemma-cutoff:c1c2r2} to continue \eqref{eq-cutoff:interval1_result2p}, concluding
    \begin{equation}
        e^{2u}\Big(-\frac{f''}f-(u')^2\Big)\geq 50(|k|+1).\label{eq-cutoff:interval1_result1}
    \end{equation}

{\noindent\bf{Case iii:}} Assume $k>0$ and $\zeta>\frac12$. We estimate \eqref{eq-cutoff:interval1_est1} by dropping the last term and using $e^u\geq\big(\frac{\log(1/r_1)}{w}\big)^{c_2}\geq\frac1w$ from \eqref{eq-cutoff:u_bound} with $\frac1{rw^5}\geq 2k$ from item 3 of Lemma \ref{lemma-cutoff:r1}
\begin{equation}\label{eq-cutoff:interval1_result2}
    e^{2u}\Big(-\frac{f''}f-(u')^2\Big)\geq k\Big(\frac{\log(1/r_1)}{w}\Big)^{2c_2}+\frac{c_1}{2r^2w^5}\geq k\frac{\log(1/r_1)^{2c_2}}{w^{2c_2}}+\frac{kc_1}{r}.
\end{equation}
To proceed, notice that the function $q(r)=k\frac{\log(1/r_1)^{2c_2}}{\log(1/r)^{2c_2}}+\frac{kc_1}{r}$ is decreasing on $[0,r_1]$. Indeed, using $c_2\leq c_1$, $r\leq r_1$, and $\frac1r>1>\frac2{\log(1/r_1)}$ we have
\begin{equation}
    q'(r)=\frac{2c_2k\log(1/r_1)^{2c_2}}{r\log(1/r)^{1+2c_2}}-\frac{kc_1}{r^2}\leq\frac{c_1k}{r}\big(\frac2{\log(1/r_1)}-\frac1r\big)<0,
\end{equation}
where we have used the fact that $r\mapsto\log(1/r)$ is decreasing.
It follows that \eqref{eq-cutoff:interval1_result2} implies
\begin{equation}\label{eq-cutoff:interval1_result3}
    e^{2u}\Big(-\frac{f''}f-(u')^2\Big)\geq q(r)\geq q(r_1)=k+\frac{kc_1}{r_1}>k.
\end{equation}

Inspecting the inequalities \eqref{eq-cutoff:interval1_result1}, \eqref{eq-cutoff:interval1_result2}, and \eqref{eq-cutoff:interval1_result3}, we have established the desired scalar curvature lower bound for $r\in[0,\frac14r_1]$. In light of estimate \eqref{eq-cutoff:scal_interval2} on the complementary interval, this proves condition (I) of Theorem \ref{t:T3tube}.

\vspace{12pt}

It remains to verify conditions (V), (VI), and (VII) of Theorem \ref{t:T3tube}.

\textbf{Distance estimate.}
By the remark following \eqref{eq-cutoff:u_bound}, we have $e^{-u(r)}\leq \log(1/r)^{c_2}$. Using the bound $c_2\leq r_1$ from item 2 of Lemma \ref{lemma-cutoff:c1c2r2}, this gives $e^{-u(r)}\leq \log(1/r)^{r_1}$. We apply this observation with an integration by parts to bound the distance:
\begin{align}
    \dist_{\G}(\{0\}\times\RR,\p D^2\times\RR)=\int_0^{r_1}e^{-u(r)}\,dr &\leq \int_0^{r_1}\log(\frac1r)^{r_1}\,dr \\
    &= r_1\log(\frac1{r_1})^{r_1}+r_1\int_0^{r_1}\log(\frac1r)^{r_1-1}\,dr.
\end{align}
Using the facts $(\log\frac1x)^x<2$ for all $x>0$ and $\log\frac1{r_1}>1$, we estimate the previous line to find
\begin{equation}\label{eq-cutoff:distest}
    \dist_{\G}(\{0\}\times\RR,\p D^2\times\RR) \leq 2r_1+r_1^2=3r_1.
\end{equation}
This establishes condition (VI).

\textbf{Volume estimate.} The volume in question is computed by integrating $e^{-u}f$. Since $h\leq1$, we have $f(r)\leq\sn_k(r)$. It follows that
\begin{align}
    \int_0^{r_1} e^{-u(r)}f\,dr &\leq \int_0^{r_1}\sn_k(r)e^{-u(r)}\,dr\leq 2r_1\cdot\int_0^{r_1}e^{-u(r)}\,dr\leq 6r_1^2,
\end{align}
where we used \eqref{eq-cutoff:sn} in the second to last inequality and the distance estimate \eqref{eq-cutoff:distest} in the final inequality.

\textbf{Mean convexity condition.} Recall from \eqref{e:meang} that the mean curvature of constant $r$ slices is $e^u\frac{f'}f$. First we compute
\begin{align}
    f' &= \cn_k(r)\cdot h+\sn_k(r)\cdot h' \\
    &= \cn_k(r)\cdot h+\sn_k(r)\big[\frac{c_1}{r_1}\eta'\big(\frac r{r_1}\big)\psi+c_1\eta\big(\frac r{r_1}\big)\zeta\big(\frac r{r_2}\big)\frac1{rw^2}+c_1\eta\big(\frac r{r_1}\big)\big(1-\zeta\big(\frac r{r_2}\big)\big)\frac r{r_2}\big].
\end{align}
Next, we drop the last two terms and apply $|\eta'|\leq4$ from \eqref{eq-cutoff:etazeta} to find
\begin{equation}
    f'\geq\cn_k(r)\cdot h-\sn_k(r)\cdot 4\cdot\frac12\geq \frac12\cn_k(r)-2\sn_k(r)\cdot ,\label{eq-cutoff:meanconv1}
\end{equation}
where we have made use of $\psi\leq\frac12$ (hence $h\geq\frac12$) from Lemma \ref{lemma-cutoff:psi} in the last inequality.
Meanwhile, the smallness conditions \eqref{eq-cutoff:sn} and item 1 of Lemma \ref{lemma-cutoff:r1} imply
\begin{equation}
    \cn_k(r)-4\sn_k(r)\geq\sn_k(r)\Big(\frac1r-|k|r-4\Big)\geq\sn_k(r)\Big(100(1+|k|)-|k|-4\Big)\geq 90\sn_k(r).
\end{equation}
Combining this with \eqref{eq-cutoff:meanconv1} shows $f'>0$ and the mean convexity statement follows.
\end{proof}

\section{The proofs of Theorems \ref{t:T3} and \ref{t:S2S1}}\label{sec:mainthm}

Having established Theorem \ref{t:T3tube}, we are now prepared to present the main examples and prove Theorems \ref{t:T3} and \ref{t:S2S1}. These examples are created by modifying the product $\Sigma\times S^1$ of a constant curvature surface $(\Sigma,g)$ with a circle near a curve $\{*\}\times S^1$, replacing a tubular neighborhood with a drawstring of the same curvature as $(\Sigma,g)$. The only difference between the two Theorems is the choice of the surface; we take $\Sigma$ as a flat torus for Theorem \ref{t:T3} and $\Sigma$ as the unit sphere in Theorem \ref{t:S2S1}. Due to the similarity of the proofs, we will provide the construction for Theorem \ref{t:T3} in detail, then describe the alterations necessary to prove Theorem \ref{t:S2S1}.

\begin{proof}[Proof of Theorem \ref{t:T3}]\label{sec:lp} 
Let $(T^3,\boldsymbol{\delta})$ denote the product of three circles, each $2\pi$ in length, and let $\gamma$ denote the vertical cycle $\gamma=\{(0,0)\}\times S^1$. For an integer $i>0$, we apply Theorem \ref{t:T3tube} with $k=0$, $\varepsilon=\delta=1/i$, and $r_0<1/i^2$ small enough to ensure that the volume $100 V_0$ appearing in property (VII) is less than $1/i^4$, which is possible according to Remark \ref{rem:epsilonpipe}. Let $A_i=(D^2\times S^1,{\widehat{\G}}_i)$ denote the resulting $\varepsilon$-drawstring where, in the notation of Theorem \ref{t:T3tube}, we have made the identification $t\sim t+2\pi$. Let $r_1\leq r_0$ be the radius given by condition (II); note that $r_1$ depends implicitly on $i$. Remove the $2r_1$-neighborhood of $\gamma$ from $T^3$ and glue in $(A_i,{\widehat{\G}}_i)$ to obtain the sequence of manifolds $\{(T^3,\G_i)\}_{i=1}^\infty$, which are smooth by the first property of Theorem \ref{t:T3tube} and with scalar curvature bounded from below by $-1/i$. By construction, $\G_i$ have a warped product form. Since $100 V_0<1/i^4$, the volume of $\G_i$ converges to $(2\pi)^3$. Moreover, by property (VI), the distance between any two points $p,q$ in $(T^3,\G_i)$ can be bounded above by their flat distance $d_\delta(p,q)+2/i^2$ by first pushing them outside $A_i$, which establishes the desired diameter upper bound. The diameter lower bound follows by considering points at least distance 1 from $A_i$.

In order to conclude the desired convergence to the space that results from pulling $\gamma$ to a point, we will apply Theorem \ref{t:scrunch}. To proceed, we verify that the sequence $\{N_i\}_{i=1}^\infty=\{(T^3,\G_i)\}_{i=1}^\infty$ satisfies conditions $(i), (ii),$ and $(iii)$ of Definition \ref{def:scrunch}. In the language of that definition, we take
\begin{equation}
    U_i=\left(B^{\boldsymbol{\delta}}(\gamma,1/i)\setminus B^{\boldsymbol{\delta}}(\gamma,2r_1)\right)\cup A_i,
\end{equation}
$\varepsilon_i=\delta_i=1/i$, and $H_i=3/i$, shown in Figure \ref{fig:torus}. Property $(i)$ is evident. To confirm property $(ii)$, we calculate 
\begin{equation}
    |U_i|_{\G_i}\leq2\pi(1/i^2-(2r_1)^2)+100V_0=2\pi/i^2+O(i^{-4}).
\end{equation}
Finally, by considering paths that travel to and along $\gamma$, the diameter of $U_i$ is bounded by $|\gamma|_{\G_i}+2d_{\G_i}(\gamma,\p U_i)$. This length converges to zero as $i\to\infty$ by conditions (IV) and (VI). It follows that $\{(T^3,\G_i)\}_{i=1}^\infty$ converges to the desired pulled string space in the volume-preserving intrinsic flat sense.

The last step is to establish the uniform lower bound of the $\mathrm{minA}$ invariant of $(T^3,\G_i)$. Suppose $S\subset T^3$ is a closed smoothly embedded surface which is minimal with respect to the metric $\G_i$. We first claim that some portion of $S$ must lie outside the distance neighborhood $B^{\boldsymbol{\delta}}(\gamma,1)$. Indeed, if $S$ were contained within $B^{\boldsymbol{\delta}}(\gamma,1)$, there would exist an $r_*< \pi$ so that $S$ lies within $\overline{B^{\G_i}(\gamma,r_*)}$ and has a tangential intersection with the cylinder $C_{r_*}=\partial B^{\G_i}(\gamma,r_*)$. If $r_*<2r_1$, then property $(V)$ of Theorem \ref{t:T3tube} implies $C_{r*}$ is strictly mean convex (with respect to the normal pointing away from $\gamma$). On the other hand, if $r_*\geq 2r_1$, then the metric $\G_i$ is Euclidean near $C_{r_*}$, and so $C_{r_*}$ is again strictly mean convex. In either case, the one-sided tangential intersection with $S$ would violate the maximum principle for minimal surfaces. It follows that $S$ must exit $B^{\boldsymbol{\delta}}(\gamma,1)$, and so there exists a $p\in S$ with the property that, for $i>1$, $B^{\G_i}(p,\frac12)$ avoids the non-flat region $A_i$. By the classical monotonicity formula for minimal surfaces \cite{CM}, we may conclude that $B^{\G_i}(p,\frac12)\cap S$ has area at least $\pi/4$. The $\minA$ lower bound follows, finishing the proof of Theorem \ref{t:T3}.
\end{proof}

\begin{figure}[h]
    \begin{center}
        \includegraphics[totalheight=5cm]{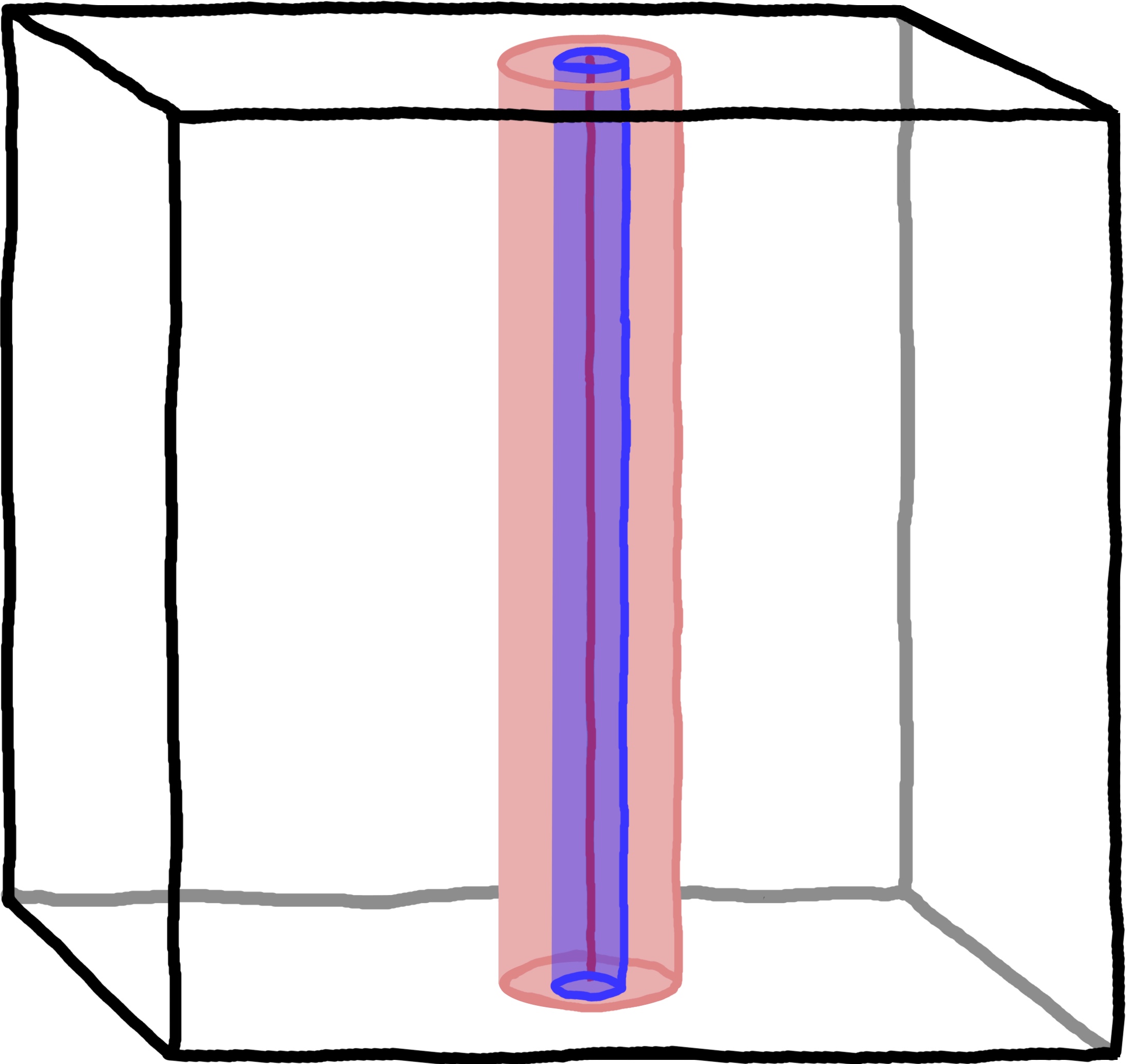}
    \end{center}
    \begin{picture}(0,0)(221,6)
        \put(175,135){$\large{(T^3,\boldsymbol{\delta})}$}
        \put(89,117){{\large{$\gamma$}}}
        \linethickness{0.25mm}
        \put(100,120){\vector(10,0){124}}
        \put(87,97){{\large{$A_i$}}}
        \put(100,100){\vector(10,0){120}}
        \put(87,77){{\large{$U_i$}}}
        \put(100,80){\vector(10,0){113}}
    \end{picture}
    \setlength{\abovecaptionskip}{-10pt}
    \captionsetup{margin=.8cm}
    \caption{A cube is shown, representing the $3$-tori in the proof of Theorem \ref{t:T3} upon identifying opposing faces. The region $U_i$ and the $\varepsilon$-drawstring $A_i$ are depicted in red and blue.}
    \label{fig:torus}
\end{figure}

\begin{proof}[Proof of Theorem \ref{t:S2S1}]
This construction is nearly identical to the one proving Theorem \ref{t:T3}, so we will be brief. Let $N_0=(S^2\times S^1,\G_0)$ be the product of the standard sphere with a circle of length $2\pi$, and $\gamma=\{x_0\}\times S^1$ be a fixed circle. Now we apply Theorem \ref{t:T3tube} with the choice $k=1$, and the same parameters $\epsilon=\delta=1/i$, and $r_0<1/i^2$ small enough so that $100V_0<1/i^4$. By removing a $2r_1$-neighborhood of $\gamma$ from $N_0$ and gluing in the resulting drawstring, we obtain a sequence $N_i=(S^2\times S^1,\G_i)$. According to our choice of parameters, $N_i$ has scalar curvature at least $2-1/i$, as well as controlled diameter and volume. An argument identical to the one provided in the proof of Theorem \ref{t:T3} shows that $N_i$ converges to the desired pulled string space in intrinsic flat sense.

To demonstrate the minA lower bound, suppose $S\subset N_i$ is a closed smoothly embedded minimal surface. Leveraging condition (V) of Theorem \ref{t:T3tube} and the geometry of the product $N_0$, one can show that the cylinders $\partial B^{\G_i}(\gamma,r)$ are strictly mean convex for $r<\frac12$. It follows that $S$ cannot lie entirely within $B^{\G_i}(\gamma,\frac12)$. Choose a point $p\in S\cap(N_i\setminus B^{\G_i}(\gamma,\frac12))$ and note, for $i>1$, that $B^{\G_i}(p,\frac14)$ is isometric to a ball of radius $\frac14$ in the product manifold $N_0$. We now apply a more general monotonicity formula \cite[(7.5)]{CM} to find $|S|_{\G_i}\geq A_0$, where $A_0$ is a constant depending only on the geometry of $N_0$.
\end{proof}


\section{\texorpdfstring{$W^{1,p}$}{W^(1,p)} Convergence of Metric Tensor}\label{sec:lp}

\subsection{Preliminary facts}

The following facts follow from direct computation. To see the scalar curvature formula within the following lemma's second item, see Proposition \ref{prop:curvatures} and its proof.

\begin{lemma}\label{lemma-aux:scalar_formula} 
    Let $g$ and $\tilde{g}$ be Riemannian metrics on a compact surface $\Sigma$ with Gauss curvatures $K$ and $\tilde K$, respectively. Given two smooth functions $u$ and $\varphi>0$ on $\Sigma$, the following hold:
	\begin{enumerate}
	    \item If $(M=\Sigma\times S^1,\G)$ has the form $\G=\g+\varphi^2dt^2$, then $R_\G=2(\K-\frac{\Delta\varphi}{\varphi})$. Moreover, $\diam(\Sigma,\g)\leq\diam(M,\G)$.
            \item If $\G$ has the form $\G=e^{-2u}\cg+e^{2u}dt^2$, then $R_\G=2e^{2u}(\cK-|\cD u|^2)$. Moreover, a curve $\gamma\subset\Sigma$ is a $\cg$-geodesic if and only if $\gamma\times S^1$ is a minimal surface for $(M,\G)$.
	\end{enumerate} 
\end{lemma}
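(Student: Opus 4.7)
The plan is to prove item 1 directly from the warped-product structure and to deduce item 2 as a conformal-change consequence. For item 1, the scalar curvature formula $R_\G = 2(\K - \Delta_g \varphi / \varphi)$ is the standard warped-product identity when the warping factor depends only on the base: $(\Sigma,g)$ contributes $2\K$ and, since $\partial_t$ is orthogonal to $\Sigma$, the vertical direction contributes $-2\Delta_g\varphi/\varphi$. This is exactly the identity cited from \cite{geometric} in the proof of Proposition \ref{prop:curvatures}. The diameter inequality follows by observing that the projection $\pi \colon (M,\G) \to (\Sigma,g)$ is a Riemannian submersion, hence $1$-Lipschitz, while each slice $\Sigma \times \{t_0\}$ is isometric to $(\Sigma,g)$; combining these gives $d_\G((p,t_0),(q,t_0)) = d_g(p,q)$ for all $p,q \in \Sigma$, whence $\diam(\Sigma,g) \leq \diam(M,\G)$.

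For the scalar curvature formula in item 2, I apply item 1 with the identification $g = e^{-2u}\cg$ and $\varphi = e^u$. The dimension-two conformal identities $\K_g = e^{2u}(\cK + \tilde{\Delta} u)$ and $\Delta_g f = e^{2u}\tilde{\Delta} f$, together with $\tilde{\Delta} e^u = e^u(\tilde{\Delta} u + |\cD u|^2)$, substitute directly into item 1 to give
\begin{align*}
\tfrac{1}{2}R_\G \;=\; \K_g - \frac{\Delta_g e^u}{e^u} \;=\; e^{2u}(\cK + \tilde{\Delta} u) - e^{2u}(\tilde{\Delta} u + |\cD u|^2) \;=\; e^{2u}(\cK - |\cD u|^2).
\end{align*}

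For the minimal-surface characterization, the key observation is the area identity
\begin{align*}
\operatorname{Area}_\G(\gamma \times S^1) \;=\; 2\pi \int_\gamma \varphi \, ds_g \;=\; 2\pi \int_\gamma e^u \cdot e^{-u}\, ds_{\cg} \;=\; 2\pi\, L_{\cg}(\gamma),
\end{align*}
where $ds_g = e^{-u}\, ds_{\cg}$ arises from the conformal relation between the base metrics. Since $\partial_t$ is a Killing field of $\G$, averaging arbitrary normal variations of $\gamma \times S^1$ over the $S^1$-action shows that this surface is minimal in $(M,\G)$ if and only if it is stationary among $S^1$-invariant variations. By the displayed identity, this is equivalent to $\gamma$ being a critical point of $\cg$-length, i.e.\ a $\cg$-geodesic. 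I do not expect any substantive obstacle; the only step requiring care is the symmetric-criticality reduction, which is immediate from the isometric $S^1$-action.
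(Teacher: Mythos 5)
Your proof is correct. For the scalar curvature formulas your conformal-change derivation matches the paper's own computation in Proposition~\ref{prop:curvatures} almost line for line, and the diameter and minimal-surface statements (which the paper leaves as ``direct computation'') are handled by standard but well-chosen arguments. The one point worth noting is your treatment of the geodesic--minimal-surface equivalence: rather than computing the second fundamental form of $\gamma\times S^1$ directly, you pass through the area identity $\operatorname{Area}_\G(\gamma\times S^1)=c\,L_{\cg}(\gamma)$ and a symmetric-criticality reduction using the $S^1$-action by isometries. This is slightly cleaner than a bare-hands mean-curvature computation (which is likely the route the authors had in mind), since it makes the conformal cancellation $\varphi\,ds_g=ds_{\cg}$ the visible mechanism behind the equivalence; the only nontrivial ingredient is the observation that the mean curvature vector is $S^1$-invariant, so vanishing of the first variation for $S^1$-invariant normal fields already forces $H\equiv0$, which you correctly invoke. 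Both approaches are short and elementary, so the difference is one of taste rather than content.
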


As an immediate consequence of the first item in Lemma \ref{lemma-aux:scalar_formula}, we have the following fact about the spectrum of the operator $-\Delta+K$ on a closed surface.

\begin{cor}\label{cor-aux:scal_spectral}
    Suppose $(\Sigma,g)$ is a closed Riemannian surface with a smooth function $\varphi>0$ and let $\lambda>0$. If $\G=\g+\varphi^2dt^2$ has scalar curvature $R_{\G}\geq-2\lambda$, then $\g$ satisfies $\lambda_1(-\Delta+\K)\geq-\lambda$, which means that any smooth function $\psi$ satisfies
    \begin{equation}\label{eq-aux:spectral}
       \int_\Sigma\big(|\D\psi|^2+K\psi^2\big)\dA\geq-\lambda\int_\Sigma\psi^2\dA.
    \end{equation}
\end{cor}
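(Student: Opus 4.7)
The plan is to deduce the spectral inequality from the scalar curvature lower bound by treating $\varphi$ as a positive supersolution to an elliptic operator on $(\Sigma, g)$, in the spirit of the classical Allegretto-Piepenbrink principle.

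First I would translate the hypothesis using Lemma \ref{lemma-aux:scalar_formula}(1). The bound $R_{\G}\geq -2\lambda$ combined with the formula $R_{\G}=2(K-\Delta\varphi/\varphi)$ yields the pointwise inequality
\begin{equation*}
    -\Delta\varphi + K\varphi \geq -\lambda\varphi \quad \text{on }\Sigma.
\end{equation*}
It is convenient to substitute $u=\log\varphi$, so that $\Delta\varphi/\varphi = \Delta u + |\D u|^2$, and the inequality becomes
\begin{equation*}
    K - \Delta u - |\D u|^2 \geq -\lambda.
\end{equation*}

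Next, I would multiply by $\psi^2$ and integrate over $\Sigma$. Integration by parts on the $\Delta u$ term gives
\begin{equation*}
    -\int_\Sigma \psi^2\Delta u\dA = 2\int_\Sigma \psi\,\D\psi\cdot\D u\dA.
\end{equation*}
The key step is to bound this cross term using the pointwise Cauchy-Schwarz inequality
\begin{equation*}
    2\psi\,\D\psi\cdot\D u \leq |\D\psi|^2 + \psi^2|\D u|^2,
\end{equation*}
which has been chosen precisely so that the resulting $\psi^2|\D u|^2$ term cancels against the $|\D u|^2$ term already present. Combining everything yields
\begin{equation*}
    \int_\Sigma(|\D\psi|^2 + K\psi^2)\dA \geq -\lambda\int_\Sigma\psi^2\dA,
\end{equation*}
as desired.

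There is no substantial obstacle, but one small matter is that the corollary is stated for smooth $\psi$ (so no cutoff issues arise on the closed surface $\Sigma$), and the positivity $\varphi>0$ on a compact surface guarantees that $u=\log\varphi$ is smooth, so the integration by parts is justified without any approximation argument. The only non-trivial idea is the choice of test function $\psi^2$ paired with the substitution $u=\log\varphi$, which linearizes the quadratic gradient term and allows the Cauchy-Schwarz estimate to produce an exact cancellation.
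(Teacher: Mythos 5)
Your proof is correct. The paper's own proof is a single sentence --- ``This follows from the fact that $\varphi$ is positive and satisfies $\Delta\varphi\leq(K+\lambda)\varphi$'' --- which simply cites the standard fact that a positive supersolution of a Schr\"odinger operator forces nonnegativity of its ground-state energy. Your argument is one of the two standard ways to verify this implication: the ``ground-state substitution'' $u=\log\varphi$ followed by completing the square, i.e.\ $2\psi\,\D\psi\cdot\D u-\psi^2|\D u|^2\leq|\D\psi|^2$. The sign bookkeeping checks out, the integration by parts is unproblematic on the closed surface, and the smoothness of $u$ is guaranteed by compactness and $\varphi>0$ as you note. The other standard route, which the paper's one-liner could equally be read as invoking, is to pair the supersolution inequality against the (positive) first eigenfunction $\psi_1$ of $-\Delta+K$: integrating $(-\Delta+K+\lambda)\varphi\geq0$ against $\psi_1$, integrating by parts, and using $-\Delta\psi_1+K\psi_1=\lambda_1\psi_1$ gives $(\lambda_1+\lambda)\int_\Sigma\varphi\psi_1\dA\geq0$, whence $\lambda_1\geq-\lambda$. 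That route is shorter but uses the a priori existence and positivity of $\psi_1$; your route is more self-contained in that it directly verifies the quadratic-form inequality for arbitrary $\psi$ without spectral theory. Either is acceptable here.
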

\begin{proof}
    Notice that $\varphi$ is positive and satisfies $\Delta\varphi\leq(K+\lambda)\varphi$. Then the result follows from mini-max characterization of the principle eigenvalue of an elliptic operator, $\lambda_0(L)=\sup_{u>0}\inf_\Sigma \frac{Lu}{u}$.
\end{proof}

We need the following geometric inequalities for metrics that satisfy \eqref{eq-aux:spectral}.

\begin{lemma}\label{lemma-aux:geometric_ineq}
    Let $(\Sigma,g)$ be a Riemannian $2$-torus satisfying $\lambda_1(-\Delta+K)\geq-\lambda$ for $\lambda>0$, $\diam(\Sigma,g)\leq D$, and $|\Sigma|_g\geq A_0$. Then the following hold:
    \begin{enumerate}
        \item The $\pi_1$-systole of $(\Sigma,g)$, defined as the length of the shortest non-contractible closed geodesic, has a positive lower bound that depends only on $\lambda,D,A_0$.
        \item The area of $\Sigma$ has an upper bound that depends only on $\lambda,D$.
        \item The isoperimetric ratio, defined as
        \begin{equation}
            \IN(\Sigma,\g):=\inf\left\{\frac{|\p\Omega|_g^2}{\min\{|\Omega|_g,|\Sigma\setminus\Omega|_g\}}:\ \Omega\subset\Sigma\text{ is a smooth domain}\right\},
        \end{equation}
        has a uniform lower bound depending only on $\lambda,D,A_0$.
    \end{enumerate}
\end{lemma}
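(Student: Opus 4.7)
The plan is to prove items (1)--(3) in order, with the main difficulty concentrated in (2). Throughout, I would exploit two consequences of the hypotheses: the test-function inequality
\[
\int_\Sigma(|\D\psi|^2 + K\psi^2)\dA \geq -\lambda\int_\Sigma\psi^2\dA
\]
valid for all smooth $\psi$, and the Gauss-Bonnet identity $\int_\Sigma K\dA = 0$ for the $2$-torus.

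For (1), I would let $\gamma$ be a shortest non-contractible closed geodesic of length $L$ and cut $\Sigma$ along $\gamma$ to produce a Riemannian annulus $A$ of area $|\Sigma|_g\geq A_0$ whose two boundary circles are isometric to $\gamma$. A Voronoi-cell argument in the universal cover $\mathbb{R}^2\to\Sigma$ bounds the second systole of the deck lattice $\pi_1(\Sigma)\cong\mathbb{Z}^2$ by $2D$: the Voronoi cell of the base lift is contained in a ball of radius $D$ (every point of $\Sigma$ is within $D$ of the basepoint), so its diameter is at most $2D$, forcing a pair of linearly independent short generators of length $\leq 2D$. Consequently the width $w:=\dist_A(\partial_+A,\partial_-A)$ satisfies $w\leq CD$ for a universal $C$. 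A co-area argument on $A$ then gives $|\Sigma|_g\geq Lw$: each distance level set from $\partial_+A$ contains a curve separating the two boundary components, which is homotopic to $\gamma$ in $\Sigma$ and hence has length at least $L$. Combining these, $L\geq A_0/(CD)$. Notably, this step does not invoke the spectral hypothesis.

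For (2), which I expect to be the main obstacle, I would pass to conformal uniformization: write $g=e^{2u}\bar g$ with $\bar g$ flat and normalized so that $|\Sigma|_{\bar g}=1$. Then $K_g\,e^{2u}=-\Delta_{\bar g}u$, and testing the spectral inequality with $\psi=e^{-u}$, using the $2$-dimensional conformal invariance of Dirichlet energy, produces after integration by parts
\[
\int_\Sigma|\D_{\bar g}e^{-u}|_{\bar g}^2\fdA \leq \lambda.
\]
Thus $e^{-u}$ lies in a bounded set of $W^{1,2}(\bar g)$. A Moser-Trudinger inequality on $(\Sigma,\bar g)$ then bounds $\int_\Sigma e^{2u}\fdA=|\Sigma|_g$ in terms of $\lambda$ and the Moser-Trudinger/Poincar\'e constants of $\bar g$. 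The hard part is that these constants depend on the modulus of $\bar g$, i.e., on how thin the flat representative is in moduli space. One must show that $\bar g$ lies in a compact subset of the moduli space of flat tori, with bounds depending only on $\lambda$, $D$, and $A_0$. I would attempt this by a bootstrap: apply the cutting/co-area argument for (1) to $\bar g$ (whose systole is comparable to that of $g$ via $u$) and compare $g$- and $\bar g$-distances through integrals of $e^{\pm u}$. Controlling this coupling is the technical core of the proof.

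For (3), once (1) and (2) are in hand, the isoperimetric lower bound follows either by transferring the flat-background isoperimetric inequality (whose constant is explicit in terms of the modulus of $\bar g$) to $g$ using $L^p$-bounds on $u$ obtained via Moser-Trudinger and Brezis-Merle, or more directly: for a Cheeger-minimizing domain $\Omega$, testing the spectral inequality against a regularized indicator recovers $|\partial\Omega|_g$ from the Dirichlet term by co-area, while the potential term $\int K\psi^2\dA$ is controlled using the area bound from (2) together with $\int K\dA=0$.
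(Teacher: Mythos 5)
Your proposal has a fatal flaw in item (1), and as a consequence the bootstrap you envision for item (2) cannot get off the ground.

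\textbf{Item (1).} You claim the systole lower bound ``does not invoke the spectral hypothesis,'' but this is false, and the place where your argument breaks is the co-area step. Cutting along $\gamma$ to produce an annulus $A$ of area $|\Sigma|_g$ and noting that the level sets $\{d(\cdot,\partial_+A)=s\}$ for $s<w$ each contain a non-contractible cycle gives, by the co-area formula, the inequality $|\Sigma|_g\geq Lw$. But this is a \emph{lower} bound on the area, and combined with the hypotheses $|\Sigma|_g\geq A_0$ and $w\leq CD$ it yields no lower bound on $L$; the inequality simply runs in the wrong direction. Indeed, the statement is false without the spectral hypothesis: consider on $T^2=\mathbb{R}^2/\mathbb{Z}^2$ the metric $g=\phi(y)^2dx^2+dy^2$ with $\phi$ an even function equal to $\varepsilon$ at $y=0$, equal to $1$ for $|y|\geq 2\delta$, and interpolated smoothly in between. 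The circle $\{y=0\}$ is a non-contractible closed geodesic of length $\varepsilon$, while the diameter remains $O(1)$ and the area remains $\geq 1-4\delta$. As $\varepsilon,\delta\to0$ the systole degenerates while diameter and area stay uniformly controlled. What fails is the spectral condition: $K=-\phi''/\phi$ concentrates enormous negative curvature on the thin strip, and a test function supported there makes $\lambda_1(-\Delta+K)$ go to $-\infty$. The paper's proof of item (1) uses the spectral hypothesis essentially: it tests \eqref{eq-aux:spectral} with $\psi=\phi(d(\cdot,\gamma))$ for $\phi(s)=e^{-\sqrt\lambda s}$, which via the distributional inequality $L'(s)\leq 2\pi\chi(\Omega(s))-\int_0^s\int_{\gamma(t)}K$ produces a bound of the form $4\sqrt\lambda\,|\gamma|\geq 2\lambda e^{-2\sqrt\lambda D}A_0$. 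There is no way to avoid the curvature input.

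\textbf{Item (2).} Your uniformization route is a genuinely different idea from what the paper does, and the Dirichlet energy bound $\int_\Sigma|\bar\nabla e^{-u}|^2\,d\bar A\leq\lambda$ that you derive by testing with $\psi=e^{-u}$ is correct. However, you correctly identify that to apply Moser--Trudinger on $(\Sigma,\bar g)$ you must first confine $\bar g$ to a compact region of moduli space, and you leave that as ``the technical core.'' The paper avoids this circularity entirely: item (2) is proved directly by taking a small geodesic circle $\gamma=\partial B(x,\varepsilon)$, applying the separating-curve inequality \eqref{eq-aux:fund_eq_sep} with a piecewise exponential $\phi$, and sending $\varepsilon\to0$ to obtain $2\lambda e^{-2\sqrt\lambda D}|\Sigma|\leq 2\pi$. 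The logical order matters here: in the paper it is items (1)--(3) of this lemma that later \emph{feed into} the uniformization, Moser--Trudinger, and Brezis--Merle machinery in the proof of Theorem \ref{thm-intro:convergence_T3}. Your bootstrap would use that downstream machinery to prove the lemma that makes the machinery available, which is circular.

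\textbf{Item (3).} Your second route is vague and the mechanism does not work as stated: the term $\int_\Sigma K\psi^2\,dA$ for a regularized indicator cannot be controlled from $\int_\Sigma K\,dA=0$ and an area bound alone, because $K$ may be very large and positive precisely on the support of $\psi$ and compensatingly negative elsewhere. The paper's argument again uses the level-set inequality \eqref{eq-aux:fund_eq_sep} along the signed distance from $\partial\Omega$, with a case analysis on $\chi(\Omega^\pm)$ and on whether $\rho^+\leq\rho^-$, choosing tailored piecewise exponentials $\phi$; this produces both $|\gamma|\geq c\lambda e^{-2\sqrt\lambda D}\tfrac{\rho^-}{\rho^+}|\Omega^+|$ and $|\gamma|\geq c\lambda e^{-2\sqrt\lambda D}\tfrac{\rho^+}{\rho^-}|\Sigma|$, which multiply out to the isoperimetric bound.

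In short: the paper proves all three items by a single uniform technique (the spectral inequality tested on level sets of distance functions, via \eqref{eq-aux:fund_eq_nonsep} and \eqref{eq-aux:fund_eq_sep}), and this technique is unavoidable for item (1). Your proposal omits the spectral input where it is indispensable and defers the essential compactness issue in items (2)--(3) to a bootstrap that is structurally circular.
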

\begin{proof}
    To establish item $1$, let $\gamma\subset\Sigma$ be a closed geodesic representing a non-trivial class in $\pi_1(\Sigma)$. Denote $\rho=\max_{x\in\Sigma}d(x,\gamma)$, so $\rho\leq D$. For almost every $s\in(0,D]$, the equi-distance set $\gamma(s)=\{x\in\Sigma: d(x,\gamma)=s\}$ is a piecewise smooth curve \cite{ST1, ST2}. Let $L(s)=|\gamma(s)|$ be its length. In general, $L(s)$ is in general not continuous in $s$. However, due to \cite[Theorem 2.2]{ST2} there is a function $J(s)$ with vanishing derivative almost everywhere so that $L(s)+J(s)$ is absolutely continuous. Hence, $L(s)$ is almost everywhere differentiable. For a positive non-increasing function $\phi:[0,\rho]\to[0,\infty]$, an argument using the spectral condition (\ref{eq-aux:spectral}) with the choice $\psi(x)=\phi(d(x,\gamma))$ yields
    \begin{equation}\label{eq-aux:fund_eq_nonsep}
	\begin{aligned}
		&\int_0^\rho (\phi')^2L\,ds+2\int_0^\rho L\phi\phi''\,ds-\lambda\int_0^\rho L\phi^2\,ds \\
		&\qquad\qquad\qquad\qquad\qquad\qquad\leq 2\pi\chi(\Sigma)\phi(\rho)^2-4\,|\gamma|\,\phi(0)\phi'(0).
	\end{aligned}
    \end{equation}
    For the sake of exposition, we postpone the proof of \eqref{eq-aux:fund_eq_nonsep} shortly afterward. Taking $\phi(s)=e^{-\sqrt\lambda s}$ in \eqref{eq-aux:fund_eq_nonsep}, a basic computation shows
    \begin{equation}\label{e:systole}
	4\sqrt\lambda|\gamma|
        \geq 2\lambda\int_0^\rho Le^{-2\sqrt\lambda s}\,ds
        \geq 2\lambda e^{-2\sqrt\lambda\rho}|\Sigma|_g
        \geq 2\lambda e^{-2\sqrt\lambda D}A_0
    \end{equation}
    where we have used the coarea formula in the second inequality above. Rearranging \eqref{e:systole} gives the lower bound on the systole.
    
    
    The area upper bound in item $2$ is obtained from an analogue of \eqref{eq-aux:fund_eq_nonsep} for separating curves. To state the relevant inequality, some notations are required. Suppose $\gamma\subset \Sigma$ is a smooth connected closed curve and that $\gamma$ separates $\Sigma$ into two regions $\Omega^+,\Omega^-$. Define the signed distance function
    \begin{equation}
    \rho(x)=\begin{cases}
          d(x,\gamma)&\text{ for }x\in\Omega^+, \\
         -d(x,\gamma)&\text{ for }x\in\Omega^-.
    \end{cases}
    \end{equation}
    Let $\rho^+$ and $\rho^-$ denote the extremal distances $\rho^+=\max_{x\in\Sigma}\rho(x)$, $\rho^-=|\min_{x\in\Sigma}\rho(x)|$, and let $L(s)$ be the length of the curve $\{x\in\Sigma:\rho(x)=s\}$.
    If $\phi:[-\rho^-,\rho^+]\to[0,\infty]$ is a nonnegative continuous function satisfying $\phi'\leq0$ when $s>0$ and $\phi'\geq0$ when $s<0$, \cite[Lemma 2.3]{Xu_2022} gives the following:
    \begin{equation}\label{eq-aux:fund_eq_sep}
	\begin{aligned}
		&\int_{-\rho^-}^{\rho^+}(\phi')^2L\,ds
			+2\int_{-\rho^-}^{\rho^+}L\phi\phi''\,ds
			-\lambda\int_{-\rho^-}^{\rho^+}L\phi^2\,ds \\
		&\qquad\qquad\leq
            2\pi\Big[\chi(\Omega^+)\phi(\rho^+)^2+\chi(\Omega^-)\phi(\rho^-)^2\Big]
		+2|\gamma|\,\phi(0)\Big[\phi'_-(0)-\phi'_+(0)\Big],
	\end{aligned}
    \end{equation}
    where $\phi'_\pm$ are the one-sided derivatives of $\phi$. To prove item $2$, we consider $\gamma=\p B(x,\varepsilon)$ for $\varepsilon$ small enough to ensure $B(x,\varepsilon)$ is contractible. Take $\Omega^+=\Sigma\setminus B(x,\epsilon)$ and $\Omega^-=B(x,\epsilon)$. We apply \eqref{eq-aux:fund_eq_sep} with the function $\phi$ with $\phi(s)=1$ when $s\leq0$ and $\phi(s)=e^{-\sqrt\lambda s}$ when $s\geq0$ to obtain
    \begin{equation}
        -\lambda\int_{-\rho^-}^0Lds+2\lambda\int_0^{\rho^+}L e^{-2\sqrt{\lambda}s}ds\leq 2\pi(1-e^{-2\sqrt\lambda\rho^+})+2|\partial B(x,\varepsilon)|\sqrt{\lambda}.        
    \end{equation}
    Applying the coarea formula to the left side results in
    \begin{equation}\label{e:item2eq}
        -\lambda|B(x,\varepsilon)|+2\lambda e^{-2\sqrt{\lambda}D}|\Sigma\setminus B(x,\varepsilon)|
        \leq 2\pi+2|\partial B(x,\varepsilon)|\sqrt{\lambda}.
    \end{equation}
    Taking $\varepsilon\to0$ in \eqref{e:item2eq} shows $2\lambda e^{-2\sqrt\lambda D}|\Sigma|\leq 2\pi$, yielding the desired area bound.

 
    For the proof of item $3$, let $\Omega\subset\Sigma$, with $\gamma=\p\Omega$. It is sufficient to prove the case where both $\Omega$ and $\Sigma\setminus\Omega$ are connected, since the ratio appearing in the definition of $\mathrm{IN}(\Sigma,g)$ decreases when passing to connected components. Moreover, it suffices to consider the case where all the components of $\gamma$ are separating. Indeed, if any connected component of $\gamma$ is non-separating, then the result follows from items $1$ and $2$. Combined with the fact that all components of $\gamma$ separate, the connectedness of $\Omega$ and $\Sigma\setminus\Omega$ implies the connectedness of $\gamma$. Let $\Omega^+=\Omega$, $\Omega^-=\Sigma\setminus\Omega$. Since $\Omega_\pm$ are connected with a single boundary component, $\chi(\Omega^\pm)$ are either $1$ or $-1$. Due to the fact that $\chi(\Omega^+)+\chi(\Omega^-)=\chi(\Sigma)=0$, we can assume that $\chi(\Omega^+)=1$ and $\chi(\Omega^-)=-1$.
    
    Now denote $\rho^+=\max_{x\in\Omega^+}d(x,\gamma)$ and $\rho^-=\max_{x\in\Omega^-}d(x,\gamma)$.
    If $\rho^+\leq\rho^-$, then we choose
    \begin{equation}
        \phi(s)=\begin{cases}
	 e^{\sqrt\lambda s}&\text{ for }-\rho^-\leq s\leq0 \\
	 e^{-\rho^-\sqrt\lambda s/\rho^+}&\text{ for }0\leq s\leq\rho^+
    \end{cases}
    \end{equation}
    for which \eqref{eq-aux:fund_eq_sep} gives
    \begin{equation}\label{eq-lp:separating}
	2\lambda\int_{\rho^-}^0Le^{2\sqrt\lambda s}\,ds+2\lambda\frac{(\rho^-)^2}{(\rho^+)^2}\int_0^{\rho^+}Le^{-2\rho^-\sqrt\lambda s/\rho^+}\,ds\leq 4|\gamma|\sqrt\lambda\frac{\rho^-}{\rho^+}.
    \end{equation}
    Applying the coarea formula to the left side of \eqref{eq-lp:separating} and throwing away the first term shows that $|\gamma|\geq\frac{\sqrt\lambda}2\frac{\rho^-}{\rho^+}e^{-2\sqrt\lambda D}|\Omega^+|$. Returning to \eqref{eq-lp:separating} again, we obtain another inequality
    \begin{equation}
        2\lambda e^{-2\sqrt\lambda D}|\Sigma|\leq 4|\gamma|\sqrt\lambda\frac{\rho^-}{\rho^+}.
    \end{equation}
    Therefore,
    \begin{equation}
        \frac{|\gamma|^2}{|\Omega^+|}
        = \frac{|\gamma|}{|\Omega^+|}\cdot|\gamma|
        \geq \frac{\sqrt\lambda}2\frac{\rho^-}{\rho^+}e^{-2\sqrt\lambda D}|\gamma|
        \geq \frac\lambda4e^{-4\sqrt\lambda D}A_0.
    \end{equation}
    When $\rho^+\geq\rho^-$, the result follows with choosing
    \begin{equation}
        \phi(s)=\begin{cases}
	 e^{\rho^+\sqrt\lambda s/\rho^-}&\text{ for }-\rho^-\leq s\leq0 \\
	 e^{-\sqrt\lambda s}&\text{ for }0\leq s\leq\rho^+
        \end{cases}
    \end{equation}
    by the similar argument.
\end{proof}

The following work will complete the proof of Lemma \ref{lemma-aux:geometric_ineq}.

\begin{proof}[{\bf{Proof of \eqref{eq-aux:fund_eq_nonsep}}}]
    The argument is similar to the proof of \eqref{eq-aux:fund_eq_sep} given in \cite[Lemma 2.3]{Xu_2022} so we will be brief. Denote $d(x)=d(x,\gamma)$, $\rho=\max_\Sigma(d)$, and for $s>0$:
    \begin{align}
    \begin{split}
        & \gamma(s)=\{d=s\},\quad L(s)=|\gamma(s)|,\quad
        \Omega(s)=\{d<s\},\quad
        \chi(s)=\chi(\Omega(s)), \\
        &\ \ 
        K(s)=\int_{\gamma(s)}K\,dl,\quad
        G(s)=\int_0^s K(t)\,dt,\quad
        \Gamma(s)=2\pi\chi(s)-G(s).
    \end{split}
    \end{align}
    By \cite{Hartman, ST1, ST2} we have $L'(s)\leq\Gamma(s)$ in the sense of distributions, and when $\gamma(s)$ is a smooth curve, we have $L'(s)=\Gamma(s)$ classically, which follows from the Gauss-Bonnet formula. 
    
    Testing \eqref{eq-aux:spectral} with $\psi(x)=\phi(d(x))$ for the given positive function $\phi$ (with $\phi'\leq0$), we obtain
    \begin{equation}\label{eq-aux:aux1}
        \int_0^\rho \big(L(s)(\phi')^2+K(s)\phi^2\big)\,ds\geq-\lambda\int_0^\rho L(s)\phi^2\,ds
    \end{equation}
    Using integration by parts on the second term on the left side of \eqref{eq-aux:aux1}, we obtain
    \begin{align}
        \int_0^\rho K(s)\phi^2\,ds &=G(\rho)\phi(\rho)^2-G(0)\phi(0)^2-\int_0^\rho 2G(s)\phi\phi'\,ds \nonumber\\
        &= 2\pi\chi(\Sigma)\phi(\rho)^2\,+2\int_0^\rho\Gamma(s)\phi\phi'\,ds-4\pi\int_0^\rho\chi(s)\phi\phi'\,ds \nonumber\\
        &\leq 2\pi\chi(\Sigma)\phi(\rho)^2+2\int_0^\rho\Gamma(s)\phi\phi'\,ds \label{eq-aux:aux2}
    \end{align}
    where we have made use of the coarea formula with the Gauss-Bonnet formula in the second equality and the facts $\chi(s)\leq0$, $\phi(s)\geq0$, and $\phi'(s)\leq0$ in the last inequality. From the distributional inequality $L'\leq\Gamma$ one can show (see \cite{Xu_2022} for technical details) that
    \begin{align}
        \int_0^\rho\Gamma(s)\phi\phi'ds &\leq -\int_0^\rho L(s)(\phi\phi')'ds-L(0^+)\phi(0)\phi'(0) \nonumber\\
        &= -\int_0^\rho L(s)(\phi'^2+\phi\phi'')ds-2|\gamma|\phi(0)\phi'(0). \label{eq-aux:aux3}
    \end{align}
    Combining \eqref{eq-aux:aux1}, \eqref{eq-aux:aux2}, and \eqref{eq-aux:aux3} yields \eqref{eq-aux:fund_eq_nonsep}.
\end{proof}

The final piece of preparation is the following uniform Moser-Trudinger inequality, which follows from \cite[Theorem 3.3]{Xu_2022}.

\begin{lemma}\label{lemma-aux:uniform_MT}
    There is a constant $C>0$ so that the following holds: Let $(\Sigma,g)$ be a closed Riemannian surface satisfying $\IN(\Sigma)\geq\xi>0$. Then for any smooth function $u$ on $\Sigma$, we have
    \begin{equation}
        \int_\Sigma\exp\Big(\frac{\xi(u-\bar u)^2}{\int_\Sigma|\D u|^2\,dA}\Big)\,dA\leq C|\Sigma|,
    \end{equation}
    where $\bar u$ is the average value of $u$ on $\Sigma$. Moreover, for any $p\geq1$, we also have
    \begin{equation}\label{eq-prelim:uniform_MT}
        \int_\Sigma e^{2p|u-\bar u|}\,dA
        \leq C|\Sigma|\cdot\exp\Big(\frac{p^2}{\xi}\int_\Sigma|\D u|^2\,dA\Big).
    \end{equation}
\end{lemma}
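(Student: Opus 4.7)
The plan is to invoke \cite[Theorem 3.3]{Xu_2022} directly to obtain the first Moser--Trudinger estimate in the lemma; the reference states this result in exactly the form with isoperimetric constant $\xi$, so no further work is needed for the first inequality. The remaining task is to derive the exponential integrability bound \eqref{eq-prelim:uniform_MT} from the first estimate via an elementary pointwise Young-type inequality.

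For the derivation of \eqref{eq-prelim:uniform_MT}, write $I := \int_\Sigma |\D u|^2 \, dA$. The degenerate case $I = 0$ is immediate: then $u$ is constant, $u - \bar u \equiv 0$, and \eqref{eq-prelim:uniform_MT} reduces to $|\Sigma| \leq C|\Sigma|$, which holds whenever $C \geq 1$. When $I > 0$, I would apply the AM-GM inequality $2ab \leq a^2 + b^2$ with the choice
\[
a = \sqrt{\xi/I}\,|u - \bar u|, \qquad b = p\sqrt{I/\xi},
\]
yielding the pointwise bound
\[
2p|u - \bar u| \leq \frac{\xi(u - \bar u)^2}{I} + \frac{p^2 I}{\xi}.
\]
Exponentiating and integrating over $\Sigma$ factors the right-hand side, and the first Moser--Trudinger estimate applies directly to the remaining integral:
\[
\int_\Sigma e^{2p|u - \bar u|} \, dA \leq \exp\Bigl(\tfrac{p^2 I}{\xi}\Bigr) \int_\Sigma \exp\Bigl(\tfrac{\xi(u - \bar u)^2}{I}\Bigr) dA \leq C|\Sigma| \exp\Bigl(\tfrac{p^2 I}{\xi}\Bigr),
\]
which is precisely \eqref{eq-prelim:uniform_MT}.

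The main obstacle, if any, is entirely contained within the cited Theorem 3.3 of \cite{Xu_2022}, where one must produce a \emph{universal} constant $C$ depending neither on area, diameter, nor local curvature of $\Sigma$, but only on the isoperimetric constant $\xi$. The typical route there is a symmetrization of $u$ along level sets of $|u - \bar u|$, using the isoperimetric inequality to reduce matters to a one-dimensional Moser--Trudinger estimate on an interval. Once that statement is quoted, the reduction from the quadratic exponential bound to the linear one via Young's inequality (as above) is entirely routine, so I do not anticipate any technical obstruction beyond verifying that a single constant $C$ suffices in both displayed inequalities.
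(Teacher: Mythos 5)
Your proposal is correct and matches the paper's approach: the paper itself does not write out a proof but simply remarks that the lemma ``follows from \cite[Theorem 3.3]{Xu_2022},'' and the intended deduction of \eqref{eq-prelim:uniform_MT} from the first Moser--Trudinger bound is exactly the pointwise AM--GM inequality $2p|u-\bar u|\leq\frac{\xi(u-\bar u)^2}{I}+\frac{p^2 I}{\xi}$ that you spell out. Your treatment of the degenerate case $I=0$ is also a reasonable addition.
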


\subsection{Proof of Theorem \ref{thm-intro:convergence_T3}}

To establish Theorem \ref{thm-intro:convergence_T3}, we make several preliminary arguments which are broken down into four claims. At the end of this section, these facts are combined to finish the proof. Denote $\Sigma=T^2$, so that $M_i\cong\Sigma\times S^1$. Recall that $\G_i$ takes the form
\begin{equation}
    \G_i=g_i+\varphi_i^2dt^2,
\end{equation}
where $g_i$ may be considered as metrics on $\Sigma$. 

Before beginning the proof, we make some preparatory observations and notations. According to the diameter and $\minA$ bounds and the submersion form of $\G_i$, $(\Sigma, g_i)$ all have diameter bounded above by $D_0$ and with area no smaller than $A_0$. By scalar curvature lower bound of $\G_i$, Corollary \ref{cor-aux:scal_spectral} and Lemma \ref{lemma-aux:geometric_ineq}, there is a constant $C_1$ depending only on $D_0$ and $A_0$ so that the following uniform geometric inequalities hold:
\begin{equation}\label{eq-lp:geom_ineq_gi}
    |\Sigma|_{g_i}\leq C_1,\qquad
    \IN(\Sigma,g_i)\geq 1/C_1.
\end{equation}
By reparameterizing the $t$-coordinate by $t\mapsto t||\varphi_i||_{L^2(g_i)}$, we may assume that $\int_\Sigma \varphi_i^2\dA_i=1$. Fixing this normalization, let $[0,T_i]$ denote the range of the $t$-coordinates appearing in $\G_i$. Notice that $T_i=\vol(M,\G_i)/\int_\Sigma\varphi_i\dA_i$. Denote $u_i=\log(\varphi_i)$, and $\cg_i=e^{2u_i}\g_i$, so that the metric is written as
\begin{align}
    \G_i=e^{-2u_i}\cg_i+e^{2u_i}dt^2,\quad 0\leq t\leq T_i.
\end{align}
Observe that $\cg_i$ has unit area. By Lemma \ref{lemma-aux:scalar_formula} we have
\begin{equation}\label{eq-lp:K>du^2}
	\cK_i\geq-\frac1ie^{-2u_i}+|\cD u_i|^2,
\end{equation}
where we used the notation $|\cD u_i|^2=|\D_{\cg_i}u_i|^2_{\cg_i}$ for brevity.

Integrating \eqref{eq-lp:K>du^2} with respect to $\cg$ and using the uniform area upper bound, we have
\begin{equation}\label{eq-lp:integral_du2}
    \int_\Sigma|\D u_i|^2\,dA_i
    = \int_\Sigma|\cD u_i|^2\cdA_i
    \leq i^{-1}|\Sigma|_{g_i}
    \leq i^{-1}C_1.
\end{equation}

\noindent{\bf{Claim 1:}} $\{T_i\}_{i=1}^\infty$ are uniformly bounded from above and below.

\begin{proof}[Proof of Claim 1]
    For any curve $\eta$ in $\Sigma$, the $\G_i$-area of the surface $\eta\times S^1$ is equal to $T_i\int_\gamma e^{u_i}\,dl_{g_i}=T_i|\eta|_{\cg_i}$. By the normalization $|\Sigma|_{\cg_i}=1$ and Loewner's systolic inequality \cite[Theorem 4.1]{metricstructures}, there is a closed $\cg_i$-geodesic $\eta\subset\Sigma$ with $|\eta|_{\cg_i}\leq \tfrac{2}{\sqrt{3}}|\Sigma|_{\cg_i}^{1/2}=\tfrac{2}{\sqrt{3}}$. Since $\eta\times S^1\subset M_i$ is minimal by part 2 of Lemma \ref{lemma-aux:scalar_formula}, the $\minA$ condition implies
    \begin{equation}
	A_0\leq T_i|\eta|_{\cg_i}\leq \tfrac{2}{\sqrt{3}}T_i,
    \end{equation} 
    which gives the uniform lower bound on $T_i$. On the other hand, the second part of Lemma \ref{lemma-aux:uniform_MT} and the isoperimetric inequality \eqref{eq-lp:geom_ineq_gi} imply there is a constant $C_2$ depending only on $D_0$ and $A_0$ so that
    \begin{equation}
        \begin{aligned}
            \int_\Sigma e^{-u_i}\dA_i
            &= \int_\Sigma e^{-u_i}\dA_i\cdot\left(\int_\Sigma e^{2u_i}\dA_i\right)^{1/2} \\
            &= \int_\Sigma e^{\bar{u_i}-u_i}\dA_i\cdot\left(\int_\Sigma e^{2(u_i-\bar{u_i})}\dA_i\right)^{1/2}
            \leq C_2,
        \end{aligned}
    \end{equation}
    hence
    \begin{equation}
        \begin{aligned}
	V_0 \geq T_i\int_\Sigma e^{u_i}\dA_i
        \geq \frac{T_i\,|\Sigma|_{\g_i}^2}{\int_\Sigma e^{-u_i}\,dA_i}
        \geq \frac{T_iA_0^2}{C_2}.
    \end{aligned}
    \end{equation}
    This shows the upper bound.
\end{proof}

The reference flat metric $\G_\infty$ in the statement of Theorem \ref{thm-intro:convergence_T3} is found by means of the Uniformization Theorem. Namely, we may consider smooth functions $\psi_i$, defined up to an additive constant, such that $\cg_i=e^{2\psi_i}\fg_i$ and $\fg_i$ are flat metrics for $i=1,2,\dots$. To specify $\psi_i$ uniquely, we demand that the systole of $(\Sigma,\fg_i)$ is equal to $1$. With a suitable choice of the generators of the lattice, we may identify $(\Sigma,\fg_i)$ with a quotient $\RR^2/\Gamma_i$ for some lattice $\Gamma_i=\text{span}_\mathbb{Z}(1,z_i)$, where $z_i\in\mathbb{C}$ satisfies $|\Ree(z_i)|\leq1/2$, $\Imm(z_i)>0$, and $|z_i|\geq1$. Note that this implies $\Imm(z_i)\geq\sqrt 3/2$. \\

{\noindent\bf{Claim 2:}} $\Imm(z_i)$ is uniformly bounded.

\begin{proof}[Proof of Claim 2]
    Similar to the first part of the proof of Claim 1, combining the $\minA$ lower bound with Lemma \ref{lemma-aux:scalar_formula} shows that the systole of $(\Sigma,\cg_i)$ is bounded from below by $A_0/T_i$. To proceed, we must recall the notion of extremal length: Given a family $S$ of curves on $(\Sigma,\fg_i)$, the extremal length associated to $S$ is defined by
    \begin{equation}\label{eq-lp:def_EL}
        EL_{\fg_i}(S)=\sup\Big\{\inf_{\gamma\in S}\frac{\big(\int_\gamma \rho\,dl_{\fg_i}\big)^2}{\int_\Sigma\rho^2\fdA_i}: \rho\in L^\infty(\Sigma),\; \rho>0\Big\}.
    \end{equation}
    We take $S$ consisting of all the smooth closed curves in $(\Sigma,\fg_i)$ that are homologous to the image of the closed segment from $(0,0)$ to $(1,0)$ within $\RR^2/\Gamma_i$. It follows from Loewner's argument that $EL_{\fg_i}(S)=\Imm(z_i)^{-1}$. On the other hand, choosing the weight function $\rho=e^{\psi_i}$ in \eqref{eq-lp:def_EL}, we find
	\begin{equation}
	    EL_{\fg_i}(S)\geq\inf_{\gamma\in S}\frac{|\gamma|_{\cg_i}^2}{|\Sigma|_{\cg_i}}\geq \left(\frac{A_0}{T_i}\right)^2.
	\end{equation}
    Hence $\Imm(z_i)\leq T_i^2/A_0^2$, which finishes the proof in light of Claim 1.
\end{proof}

Next, we prepare to show that $\psi_i$ and $u_i$ are almost constant. Written in terms of the flat metrics $\fg_i$, the Gauss curvature bound \eqref{eq-lp:K>du^2} is equivalent to the following supersolution condition:
\begin{equation}\label{eq-lp:supersol}
	\fDelta\psi_i\leq i^{-1}e^{2\psi_i-2u_i}-|\fD u_i|^2.
\end{equation}
By the conformal invariance of Dirichlet energy, \eqref{eq-lp:integral_du2} gives $\int_\Sigma|\fD u_i|^2\fdA_i=\int_\Sigma|\cD u_i|^2\cdA_i\leq C_1i^{-1}$. Moreover, $\int_\Sigma e^{2\psi_i-2u_i}\fdA_i=|\Sigma|_{g_i}\leq C_1$. Using these observations, integrating \eqref{eq-lp:supersol} gives $|\fDelta\psi_i|_{L^1(\fg_i)}\leq 2C_1i^{-1}$. In turn, we may apply an adapted Brezis-Merle's Lemma to flat tori (presented in Lemma \ref{lemma-app:BM}) with Claim 2 to find a universal constant $C_3$ so that
\begin{equation}\label{eq-lp:brezis-merle_for_psi}
    \int_\Sigma\exp\left(\frac{i}{C_3}|\psi_i-\bar\psi_i|\right)\fdA_i\leq C_3,
\end{equation}
where $\bar\psi_i=\frac1{\Imm(z_i)}\int_\Sigma \psi_i\fdA_i$. As for the functions $u_i$, combining Lemma \ref{lemma-aux:uniform_MT} with Claim 2, the Dirichlet energy bound just mentioned, and the lower bound of $\mathrm{IN}(\Sigma,\fg_i)$ shows there is a universal constant $C_4$ so that
\begin{equation}\label{eq-lp:moser-trudinger_for_u}
	\int_\Sigma\exp\left(\frac{i}{C_4}(u_i-\bar u_i)^2\right)\fdA_i\leq C_4,
\end{equation}
where $\bar u_i=\frac1{\Imm(z_i)}\int_\Sigma u_i\fdA_i$. \\

{\noindent\bf{Claim 3:}}
	For any fixed $p\geq1$, we have
	\begin{align}
		& \int_\Sigma e^{p|\psi_i-\bar\psi_i|}\fdA_i\leq (1+o(1))\Imm(z_i), \label{eq-lp:psi_almost_const}\\
		& \int_\Sigma e^{p|u_i-\bar u_i|}\fdA_i\leq (1+o(1))\Imm(z_i), \label{eq-lp:u_almost_const}
	\end{align}
	where $o(1)$ denotes terms which tend to $0$ as $i\to\infty$.
\begin{proof}[Proof of Claim 3]
    We first derive (\ref{eq-lp:psi_almost_const}). Denote $\psi'_i=|\psi_i-\bar\psi_i|$, and set $S_1=\{\psi'_i\leq i^{-1/2}\}$, $S_2=\Sigma\setminus S_1$. By \eqref{eq-lp:brezis-merle_for_psi} we have for $i\gg C_3p$
    \begin{align}
	\int_\Sigma e^{p\psi'_i}\fdA_i 
	&= \int_{S_1}e^{p\psi'_i}\fdA_i+\int_{S_2}e^{p\psi'_i}\fdA_i \\
	&\leq \int_{S_1}e^{p\psi'_i}\fdA_i
		+\int_{S_2}e^{i\psi'_i/C_3}\fdA_i\cdot \exp\big[(p-i/C_3)/\sqrt i\big] \\
	&\leq \Imm(z_i)e^{p/\sqrt i}+C_3\exp\big[p/\sqrt i-\sqrt i/C_3\big] \\
	&= (1+o(1))\Imm(z_i).
    \end{align}
    Analogously for $u_i$, we let $u'_i=|u_i-\bar u_i|$. This time we set $S_1=\{u'_i\leq i^{-1/4}\}$, $S_2=\Sigma\setminus S_1$. By \eqref{eq-lp:moser-trudinger_for_u} and Young's inequality, we have for $i\gg C_4(p/2)^2$
    \begin{align}
	\int_\Sigma e^{pu'_i}\fdA_i &= \int_{S_1} e^{pu'_i}\fdA_i+\int_{S_2} e^{pu'_i}\fdA_i \\
        &\leq \Imm(z_i)e^{pi^{-1/4}}+\int_{S_2}\exp\big(2\sqrt{\frac i{C_4}}u'_i\big)\,\fdA_i \cdot \exp\big[(p-2\sqrt{i/C_4})i^{-1/4}\big] \\
	&\leq \Imm(z_i)e^{pi^{-1/4}}+\int_{S_2}\exp\big(1+\frac i{C_4}(u'_i)^2\big)\,\fdA_i \cdot \exp\big[(p-2\sqrt{i/C_4})i^{-1/4}\big]  \\
	&\leq \Imm(z_i)e^{pi^{-1/4}}+C_4\exp\big[1+p/i^{1/4}-2i^{1/4}/\sqrt C_4\big] \\
	&= (1+o(1))\Imm(z_i).
    \end{align}
\end{proof}

{\noindent \bf{Claim 4:}}	$|\bar u_i|$ and $|\bar\psi_i|$ are uniformly bounded.
\begin{proof}[Proof of Claim 4:]
    Since $x\mapsto e^{2x}$ is convex, we have $e^{2x}\geq e^{2y}+2e^{2y}(x-y)$ for all $x,y\in\RR$. Setting $x=\psi_i$ and $y=-\log\Imm(z_i)/2$, we obtain $e^{2\psi_i}\geq \Imm(z_i)^{-1}+2\Imm(z_i)^{-1}(\psi_i+\frac12\log\Imm(z_i))$. Integrating this,
    \begin{align}\label{eq-lp:psi<}
	&1=|\Sigma|_{\cg_i}=\int_\Sigma e^{2\psi_i}\fdA_i\geq1+2\bar\psi_i+\log\Imm(z_i)
    \end{align}
    which immediately leads to $\bar\psi_i\leq-\frac12\log\Imm(z_i)$. On the other hand, by taking $p=2$ in \eqref{eq-lp:psi_almost_const} we have
    \begin{equation}
        1=\int_\Sigma e^{2\psi_i}\,d\tilde A_i\leq e^{2\bar\psi_i}\cdot(1+o(1))\Imm(z_i).
    \end{equation}
    Combined with the last inequality we have
    \begin{equation}\label{eq-lp:psi_approx}
	\bar\psi_i=-\frac12\log\Imm(z_i)+o(1),
    \end{equation}
    giving the desired bound for $|\bar\psi_i|$ in light of Claim 2 and $\Imm(z_i)\geq\sqrt 3/2$.
    
    For $u_i$ we have
    \begin{align}
	C_1 &\geq |\Sigma|_{\g_i}=\int_\Sigma e^{2\psi_i-2u_i}\fdA_i \\
        &\geq\int_\Sigma\Big[\Imm(z_i)^{-1}+2\Imm(z_i)^{-1}(\psi_i-u_i+\frac12\log\Imm(z_i))\Big]\fdA_i \\
	&\geq 1-o(1)-2\bar u_i.
    \end{align}
    On the other hand, by the minA condition, \eqref{eq-lp:psi_approx}, and Claim 3, we have for sufficiently large $i$
    \begin{align}
        A_0 &\leq \int_\Sigma e^{2\psi_i-2u_i}\,d\tilde A_i\\
        &\leq \Big[\int_\Sigma e^{4\psi_i}\,d\tilde A_i\cdot\int_\Sigma e^{-4u_i}\,d\tilde A_i\Big]^{1/2} \\
        &\leq e^{2\bar\psi_i-2\bar u_i}\cdot \Imm(z_i)(1+o(1))\\
        &\leq 2e^{-2\bar u_i}.
    \end{align}
    This gives the uniform bound on $|\bar u_i|$.
\end{proof}

\begin{proof}[Proof of Theorem \ref{thm-intro:convergence_T3}]
	
    Recall that we have re-written the metrics $\G_i$ as
    \begin{equation}\label{eq-lp:Gi}
        \G_i=e^{2\psi_i-2u_i}\fg_i+e^{2u_i}dt^2,
    \end{equation}
    where $\fg_i$ are the flat metrics on $\RR^2/\text{span}_\mathbb{Z}(1,z_i)$ and $t\in[0,T_i]$. By Claims 1, 2 and 4, we can choose a subsequence and assume that $T_i\to T_\infty$, $z_i\to z_\infty$ and $\bar u_i\to \mathcal{A}$ as $i\to\infty$ for some values $T_\infty, z_\infty, \mathcal{A}\in\RR$. From \eqref{eq-lp:psi_approx} we have $\bar\psi_i\to\mathcal{B}:=-\frac12\log\Imm(z_\infty)$. Let $(M_\infty,\G_\infty)$ be a 3-torus with the metric
    \begin{equation}
        \G_\infty=e^{2\mathcal{B}-2\mathcal{A}}\fg_\infty+e^{2\mathcal{A}}dt^2
    \end{equation}
    where $\fg_\infty$ is the flat metric on $\RR^2/\text{span}_\mathbb{Z}(1,z_\infty)$ and $t\in[0,T_\infty]$. There are natural maps $\Phi_i:(M_\infty,\G_\infty)\to(M_i,\G_i)$ which are the product of the linear maps $\Phi_i:\RR^2/\text{span}_\mathbb{Z}(1,z_\infty)\to\RR^2/\text{span}_\mathbb{Z}(1,z_i)$ and the scalings $t\mapsto\frac{T_i}{T_\infty}t$. Since the underlying flat tori are smoothly converging, we have
    \begin{equation}
        e^{-2|\psi_i-\bar\psi_i|-2|u_i-\bar u_i|}(1-o(1))\G_\infty\leq\Phi_i^*\G_i\leq e^{2|\psi_i-\bar\psi_i|+2|u_i-\bar u_i|}(1+o(1))\G_\infty
    \end{equation}
    where we are implicitly making use of $\Phi_i$ to consider $\psi_i$ and $u_i$ as functions on $M_\infty$.
    
    Let $p_0\geq 1$. To prove the $L^{p_0}$ convergence of metric tensor $\Phi^*\G_i\xrightarrow{L^{p_0}(\G_\infty)}\G_\infty$, it is sufficient to show 
    \begin{equation}
        \int_\Sigma\Big(e^{2p_0|\psi_i-\bar\psi_i|+2p_0|u_i-\bar u_i|}-1\Big)\fdA_i=o(1),
    \end{equation}
    which follows immediately from Claim 3.
    
    Next, for a given $p\in[1,2)$, we will show the convergence $\D_{\G_\infty}(\Phi^*g_i)\xrightarrow{L^p(\G_\infty)}0$. Consider the intermediate metrics $\hat\G_i=\bar g_i+dt^2$, for $t\in[0, T_i]$, on $M_i$. From \eqref{eq-lp:Gi}, we directly estimate
    \begin{align}
    \begin{split}
        \big|\D_{\hat\G_i}\G_i\big|
        &\leq 2\Big(e^{2|\psi_i|}\cdot e^{2|u_i|}\cdot\big(|\fD\psi_i|+|\fD u_i|\big)
        + e^{2|u_i|}\cdot|\fD u_i|\Big) \\
        &\leq 4\,e^{2|\psi_i|}\cdot e^{2|u_i|}\cdot\big(|\fD\psi_i|+|\fD u_i|\big).\label{eq-Lp:nabla1}
    \end{split}
    \end{align}
    Set $q=\left(1+\frac12p\right)$ and notice that $p<q<2$. Integrating \eqref{eq-Lp:nabla1} over $M_i$ and using the general H{\"o}lder's inequality, we have
    \begin{equation}\label{eq-lp:holder}
        ||\D_{\hat\G_i}\G_i||_{L^p(\hat\G_i)}
        \leq 4T_i^{1/p}\Big(||\fD\psi_i||_{L^q}\cdot||e^{2|u_i|}||_{L^r}\cdot||e^{2|\psi_i|}||_{L^r}
        +||\fD u_i||_{L^2}\cdot||e^{2|u_i|}||_{L^s}\cdot||e^{2|\psi_i|}||_{L^s}\Big),
    \end{equation}
    where the powers $r,s>1$ are determined by $\frac2r=\frac1p-\frac1q$, $\frac2s=\frac1p-\frac12$, and all the norms are with respect to $\fg_i$.
    Using Claims 3 and 4 to estimate the $L^r$ and $L^s$ norms in \eqref{eq-lp:holder} and Claim 1 to bound $T_i$, there is a $C_5$ independent of $i$ so that
    \begin{equation}\label{eq-lp:penult}
        ||\D_{\hat\G_i}\G_i||_{L^p(\hat\G_i)}
        \leq C_5(||\fD\psi_i||_{L^q}+||\fD u_i||_{L^2}).
    \end{equation}
    Since Claim 2 asserts that $\mathrm{Im}(z_i)$ is uniformly bounded, we may apply Lemma \ref{lemma-app:w1p} to find constants $C_6, C_7$ independent of $i$ so that
    \begin{equation}\label{eq-lp:dpsi_p}
        ||\fD\psi_i||_{L^q(\fg_i)}\leq C_6||\fDelta\psi_i||_{L^1(\fg_i)}\leq C_7i^{-1},
    \end{equation}
    where we have used \eqref{eq-lp:supersol} and the sentence thereafter to estimate $\overline{\Delta}\psi_i$. Combining \eqref{eq-lp:penult} and \eqref{eq-lp:dpsi_p} with the fact that $\int|\fD u_i|^2\fdA_i=o(1)$, we obtain $||\D_{\hat\G_i}\G_i||_{L^p(\hat\G_i)}\to0$ as $i\to\infty$. Finally, as $\G_\infty$ and $\hat\G_i$ are uniformly bi-Lipschitz equivalent through the maps $\Phi_i$, there is a uniform constant $C_8$ so that
    \begin{equation}
        ||\D_{\G_\infty}(\Phi^*\G_i)||_{L^p(\G_\infty)}\leq C_8||\D_{\hat\G_i}\G_i||_{L^p(\hat\G_i)}\to0
    \end{equation}
    as $i\to\infty$, as claimed.
\end{proof}


\appendix
\section{\texorpdfstring{$L^1$}{L^1} estimates on flat tori}\label{sec:L1_est}
This appendix is aimed at establishing some estimates on the conformal factor $\psi_i$ defined in Section \ref{sec:lp}. The general setup is the following: fix $L>0$ and suppose $\Sigma=\RR^2/\text{span}_\mathbb{Z}(1,z)$ is a flat torus, where $z$ is a complex number such that $\Ree(z)\leq\frac12$, $|z|\geq1$, and $\Imm(z)\leq L$. Note that these conditions imply $\Imm(z)\geq\sqrt3/2$. Let $G(x,y)$ be the Green's function on $\Sigma$, uniquely determined by the conditions
\begin{equation}
    \Delta_x G(x,y)=\delta_y(x)-\frac1{|\Sigma|}, \qquad \int_\Sigma G(x,y)\,dx=0
\end{equation}
for all $x,y\in \Sigma$, where $\Delta_x$ denotes the Laplacian in the $x$-variable and $\delta_y$ denotes the Dirac distribution centered at $y$. Since $z$ lies in a compact region away from $0$ and $1$ in $\mathbb{R}^2$, it is a classical fact that there is a uniform constant $C_1$ depending on $L$ such that
\begin{align}\label{eq-appA:greensbounds}
    \begin{split}
    |G(x,y)|&\leq \frac1{2\pi}|\log d(x,y)|+C_1\\
    |\D G(x,y)|&\leq\frac1{2\pi}\frac1{d(x,y)}+C_1.
    \end{split}
\end{align}
Given a sufficiently regular function $f$ on $\Sigma$ such that $\int_\Sigma f\,dA=0$, the function 
\begin{equation}
    u(x)=\int_\Sigma G(x,y)f(y)\,dy
\end{equation}
solves $\Delta u=f$ and satisfies $\int_\Sigma u\,dA=0$.

\begin{lemma}\label{lemma-app:BM}
    With the assumptions above, there is a constant $C_2$ depending only on $L$ so that the following holds for any $0<\alpha\leq 4\pi$: Given $f\in L^1(\Sigma)$ the function $u(x)=\int_\Sigma G(x,y)f(y)\,dy$ satisfies
    \begin{equation}
    \int_\Sigma\exp\Big(\frac{(4\pi-\alpha)|u|}{||f||_{L^1}}\Big)\,dA\leq C_2\alpha^{-1}.
    \end{equation}
\end{lemma}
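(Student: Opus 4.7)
The argument follows the classical Brezis-Merle scheme adapted to the flat torus. By replacing $f$ with $f/||f||_{L^1}$ we may assume $||f||_{L^1}=1$; writing $\beta := 4\pi-\alpha \in (0,4\pi]$, the goal reduces to
\[
\int_\Sigma \exp(\beta |u|)\,dA \leq C_2/\alpha.
\]
The first step is to insert the Green's function pointwise bound from \eqref{eq-appA:greensbounds}, which gives
\[
|u(x)| \leq \int_\Sigma \Big(\frac{|\log d(x,y)|}{2\pi} + C_1\Big) |f(y)|\,dy.
\]
Since $|f(y)|\,dy$ is now a probability measure on $\Sigma$, Jensen's inequality applied to the convex function $t\mapsto e^{\beta t}$ yields
\[
\exp(\beta|u(x)|) \leq e^{\beta C_1} \int_\Sigma \exp\Big(\frac{\beta|\log d(x,y)|}{2\pi}\Big) |f(y)|\,dy.
\]

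Integrating in $x$ and applying Fubini, the task reduces to estimating $I(y) := \int_\Sigma \exp(\beta|\log d(x,y)|/(2\pi))\,dx$ uniformly in $y$. I would handle this by lifting to $\mathbb{R}^2$: the constraints $|\Ree(z)|\leq 1/2$, $|z|\geq 1$, $\Imm(z)\leq L$ make $\diam(\Sigma)$ bounded in terms of $L$, so the Voronoi cell of any representative of $y$ is contained in a Euclidean ball of radius $D=D(L)$ on which $d_\Sigma(x,y)=|x-y|_{\mathbb{R}^2}$. Splitting the integral into the regions $\{d<1\}$ and $\{d\geq 1\}$ and passing to Euclidean polar coordinates around $y$ yields
\[
I(y) \leq 2\pi\int_0^{\min(1,D)} r^{1-\beta/(2\pi)}\,dr + 2\pi\int_{\min(1,D)}^{D} r^{1+\beta/(2\pi)}\,dr.
\]
Since $1-\beta/(2\pi) = -1+\alpha/(2\pi)$, the first integral equals $\frac{4\pi^2}{\alpha}[\min(1,D)]^{\alpha/(2\pi)} \leq \frac{4\pi^2}{\alpha}$, while the second is bounded by a constant depending only on $L$ (as the integrand is bounded above by $D^{1+\beta/(2\pi)}\leq D^3$).

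Combined with the uniform bound $e^{\beta C_1}\leq e^{4\pi C_1}$, this produces $\int_\Sigma e^{\beta|u|}\,dA \leq C_2/\alpha$ with $C_2$ depending only on $L$. The only substantive point is the planar estimate of $I(y)$: the convergence of the near-origin integral $\int_0 r^{-1+\alpha/(2\pi)}\,dr$ depends critically on $\alpha>0$ and produces the singular prefactor $1/\alpha$ that the statement requires. Everything else follows directly from Jensen's inequality and Fubini, so no genuine analytic obstacle is expected.
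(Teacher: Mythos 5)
Your argument is essentially the same as the paper's: normalize $||f||_{L^1}=1$, bound $|u|$ via the Green's function estimate \eqref{eq-appA:greensbounds}, apply Jensen with respect to the probability measure $|f(y)|\,dy$, exchange integrals, and then estimate $\int_\Sigma \exp\big((4\pi-\alpha)|G(x,y)|\big)\,dx$ by a near/far decomposition in polar coordinates, producing the $\alpha^{-1}$ prefactor from the near-origin integral. Your far-field treatment (splitting at $d=1$ and using $\exp((4\pi-\alpha)|\log d|/(2\pi))=d^{+(4\pi-\alpha)/(2\pi)}$ there, bounded by $D(L)$-dependent constants) is in fact slightly more careful than the paper's, which writes $d^{-(4\pi-\alpha)/(2\pi)}$ uniformly -- a bound that is only valid for $d\leq 1$ -- but this is an inessential difference.
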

\begin{proof}
    Applying Jensen's inequality with the measure $d\mu=\frac{f(y)}{||f||_{L^1}}\,dy$, we can estimate
    \begin{align}
        \int_\Sigma\exp\left(\frac{(4\pi-\alpha)|u(x)|}{||f||_{L^1}}\right)\,dx &\leq \int_\Sigma\exp\left(\int_\Sigma(4\pi-\alpha)|G(x,y)|\cdot\frac{|f(y)|}{||f||_{L^1}}\,dy\right)dx \\
        &\leq \int_\Sigma\left(\int_\Sigma\exp\big((4\pi-\alpha)|G(x,y)|\big)\cdot\frac{|f(y)|}{||f||_{L^1}}\,dy\right)dx \\
        &= \int_\Sigma\frac{|f(y)|}{||f||_{L^1}}\left(\int_\Sigma\exp\big((4\pi-\alpha)|G(x,y)|\big)\,dx\right)\,dy.
    \end{align}
    Then the above integral involving $G(x,y)$ can be estimated using the asymptotics \eqref{eq-appA:greensbounds} to find
    \begin{align}
        \int_\Sigma\exp\left(\frac{(4\pi-\alpha)|u(x)|}{||f||_{L^1}}\right)\,dx &\leq \int_\Sigma\frac{|f(y)|}{||f||_{L^1}}\left(\int_\Sigma e^{4\pi C_1}|d(x,y)|^{-\frac{4\pi-\alpha}{2\pi}}\,dx\right)dy.
    \end{align}
    Since the exponent of $d(x,y)$ satisfies $-\frac{4\pi-\alpha}{2\pi}>-2$, we may decompose $\Sigma=B(x,\frac14)\cup B(x,\frac14)^c$ and compute in radial coordinates 
    \begin{align}
        \int_\Sigma |d(x,y)|^{-\frac{4\pi-\alpha}{2\pi}}\,dx&\leq 2\pi \int^{\frac14}_{0}r^{-1+\frac{\alpha}{2\pi}}\,dr+4|\Sigma|\\
        &\leq 4\pi^2\alpha^{-1}+4L.
    \end{align}
    The result follows.
\end{proof}

\begin{lemma}\label{lemma-app:w1p}
    With the same assumptions as in Lemma \ref{lemma-app:BM}, there is a constant $C_3$ depending only on $L$ so that for any $1\leq p<2$ the function $u(x)=\int_\Sigma G(x,y)f(y)\,dy$ satisfies
    \begin{equation}
        ||\D u||_{L^p}\leq C_3\frac{||f||_{L^1}}{(2-p)^{1/p}}.
    \end{equation}
\end{lemma}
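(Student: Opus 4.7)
The plan is to apply Minkowski's integral inequality to reduce the problem to bounding the $L^p$-norm of the gradient of the Green's function uniformly in one of the variables, and then to use the asymptotic estimate \eqref{eq-appA:greensbounds} together with a direct computation in polar coordinates to extract the $(2-p)^{-1/p}$ factor.

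Concretely, I would begin by differentiating under the integral sign to write $\D u(x)=\int_\Sigma \D_x G(x,y)f(y)\,dy$. Minkowski's integral inequality then yields
\begin{equation}
    \|\D u\|_{L^p(\Sigma)}\leq \int_\Sigma |f(y)|\,\bigl\|\D_x G(\cdot,y)\bigr\|_{L^p(\Sigma)}\,dy.
\end{equation}
Thus it suffices to establish a bound of the form $\|\D_x G(\cdot,y)\|_{L^p(\Sigma)}\leq C_3(2-p)^{-1/p}$ uniformly in $y\in\Sigma$, where $C_3$ depends only on $L$.

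For that uniform bound I would use the gradient estimate from \eqref{eq-appA:greensbounds}, namely $|\D_x G(x,y)|\leq \frac{1}{2\pi}d(x,y)^{-1}+C_1$. Raising to the $p$-th power with the elementary inequality $(a+b)^p\leq 2^{p-1}(a^p+b^p)$ and integrating over $\Sigma$, the problem reduces to estimating $\int_\Sigma d(x,y)^{-p}\,dx$. Split $\Sigma$ into $B(y,\tfrac14)$ and its complement. The outer piece is trivially bounded by $4^p|\Sigma|\leq 4^p\cdot L$, and the inner piece is computed in polar coordinates:
\begin{equation}
    \int_{B(y,1/4)} d(x,y)^{-p}\,dx \leq 2\pi\int_0^{1/4} r^{1-p}\,dr = \frac{2\pi}{2-p}\Bigl(\tfrac14\Bigr)^{2-p}\leq \frac{2\pi}{2-p}.
\end{equation}
Combining these two estimates produces a constant $C$ depending only on $L$ with $\int_\Sigma |\D_x G(x,y)|^p\,dx \leq C/(2-p)$, and taking the $p$-th root gives the desired uniform bound on $\|\D_x G(\cdot,y)\|_{L^p}$. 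Substituting back into Minkowski yields the stated inequality.

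The only slightly delicate point is the justification of differentiating under the integral when $f$ is merely $L^1$; this can be handled by approximating $f$ by smooth functions, applying the estimate there, and passing to the limit using the fact that $u\mapsto \D u$ is continuous as a map from the Green's-function construction on $L^1$ to $L^p$ with the constants just derived. No part of the argument is hard — the whole point is to track the dependence on $p$ through the single integral $\int_0^{1/4}r^{1-p}\,dr$, which is where the $(2-p)^{-1}$ factor originates.
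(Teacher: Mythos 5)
Your argument is correct and reaches the desired inequality, but it employs a different key tool than the paper's proof. You use Minkowski's integral inequality to reduce everything to a single uniform kernel bound $\|\D_x G(\cdot,y)\|_{L^p(\Sigma)}\leq C(2-p)^{-1/p}$ independent of $y$, after which $\|f\|_{L^1}$ factors out for free. The paper instead applies H\"older's inequality inside the $x$-integral with respect to the measure $|f(y)|\,dy$, yielding
\begin{equation}
\int_\Sigma|\D u(x)|^p\,dx\ \leq\ \|f\|_{L^1}^{p-1}\int_\Sigma\int_\Sigma |\D_x G(x,y)|^p\,|f(y)|\,dy\,dx,
\end{equation}
and then exchanges the order of integration before estimating $\int_\Sigma|\D_x G(x,y)|^p\,dx$. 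Both routes ultimately rest on exactly the same two ingredients: the pointwise gradient estimate \eqref{eq-appA:greensbounds} for the Green's function and the radial computation $\int_0^{1/4}r^{1-p}\,dr=\tfrac{1}{2-p}\bigl(\tfrac14\bigr)^{2-p}$, which is the sole source of the $(2-p)^{-1/p}$ factor. The Minkowski approach is arguably cleaner conceptually because it visibly isolates the kernel as the object to estimate, whereas the paper's H\"older-with-weighted-measure approach is equally short; neither has a real technical edge, and both are standard ways to establish a Young-type convolution bound with an $L^1$ source. Your closing remark about approximating $f\in L^1$ by smooth functions to justify differentiating under the integral is a reasonable way to dispatch the regularity issue, though in the actual application in Section~\ref{sec:lp} the source $f=\fDelta\psi_i$ is smooth, so this point is not load-bearing.
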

\begin{proof}
    A standard argument using \eqref{eq-appA:greensbounds} allows one to compute the gradient in the $x$ variable by
    \begin{align}
        \D u(x)=\int_\Sigma\D_x G(x,y)f(y)\,dy.
    \end{align}
    Next, we integrate and apply H{\"o}lder's inequality with the measure $d\mu=|f(y)|dy$ to find
    \begin{align}
        \int_\Sigma|\D u(x)|^p\,dx &\leq \int_\Sigma\Big(\int_\Sigma|\D_x G(x,y)|\cdot |f(y)|\,dy\Big)^p\,dx \\
        &\leq \int_\Sigma\Big(\int_\Sigma|\D_x G(x,y)|^p|f(y)|\,dy\Big)\cdot\Big(\int_\Sigma |f(y)|\,dy\Big)^{p-1}\,dx \\
        &\leq ||f||_{L^1}^{p-1}\int_\Sigma\Big(\int_\Sigma C_4\left(1+\frac1{d(x,y)^p}\right)|f(y)|\,dy\Big)\,dx
    \end{align}
    for some constant $C_4$ depending only on $L$, where the last inequality follows from \eqref{eq-appA:greensbounds}. Exchanging the order of integration on the right side yields
    \begin{align}
        \int_\Sigma|\D u(x)|^p\,dx &\leq C_4||f||_{L^1}^{p-1}\int_\Sigma |f(y)|\,dy\int_\Sigma\big(1+\frac1{d(x,y)^p}\big)\,dx \label{eq-appA:singular1}\\
        &\leq C_4||f||_{L^1}^{p-1}\int_\Sigma |f(y)|\,dy\cdot \frac{C_5}{2-p}
    \end{align}
    for some constant $C_5$ depending only on $L$, where we have used $p<2$ when computing the right-most integral in line \eqref{eq-appA:singular1}.
\end{proof}

\bibliographystyle{alpha}
\bibliography{teardropref}

\noindent\textit{Department of Mathematics, Michigan State University,
East Lansing, MI 48824}

\noindent\textit{Email: \href{mailto:kazarasd@msu.edu}{kazarasd@msu.edu}}

\vspace{9pt}

\noindent\textit{Department of Mathematics, Duke University, Durham, NC, 27708,}

\noindent\textit{Email: \href{mailto:kx35@math.duke.edu}{kx35@math.duke.edu}}

\end{document}